\theoremstyle{definition}
\newtheorem{thm}{Theorem}[section]
\newtheorem{Lemma}{Lemma}[section]
\numberwithin{equation}{section}
\newcommand*{\inp}{\xrightarrow{\mathrm{p}}}
\newcommand*{\ind}{\xrightarrow{\mathrm{d}}}
\newcommand{\Aref}[1]{(A\ref{#1})}
\title{Adaptive Bayes estimator
 for stochastic differential equations with jumps under small noise asymptotics}
\author{
        Shuntaro Suzuki \thanks{shuntarou@akane.waseda.jp} \\
        Graduate School of Fundamental Science and Engineering, Waseda University\\
        \vspace{1pt}\\
        Takaaki Wakamatsu \thanks{wakamatsu@moegi.waseda.jp}\\
        School of Fundamental Science and Engineering, Waseda University \\
        \vspace{1pt} \\
        Yasutaka Shimizu \thanks{shimizu@waseda.jp}\\
        Department of Applied Mathmatics, Waseda University   
    }
\date{2024 Febrary 20th}
\begin{document}
\maketitle
\begin{abstract}
In this paper, we consider parameter estimation for stochastic differential equations driven by Wiener processes and compound Poisson processes. We assume unknown parameters corresponding to coefficients of the drift term, diffusion term, and jump term, as well as the Poisson intensity and the probability density function of the underlying jump.
 We propose estimators based on adaptive Bayesian estimation from discrete observations. We demonstrate the consistency and asymptotic normality of the estimators within the framework of small noise asymptotics.
\end{abstract}

{\it{Key words:}} Jump-diffusion process; small noise asymptotics; discrete observations; Bayesian inference; adaptive estimation.\\
{\it{MSC{2020}:}} \textbf{Primary 62M20};Secondary:62G20,62F15.
\section{Introduction}
Consider a stochasitc process $(X_{t}^{\theta,\varepsilon})_{t \in [0,1]}$ following
 a one-dimensional stochastic differential equation under the usual conditions for a stochastic basis $(\Omega,\mathcal{F}, (\mathcal{F}_{t})_{t\in [0,1]},\mathbb{P})$:
\begin{align}
    \left\{
        \begin{aligned}
            dX_{t}^{\theta,\varepsilon} &\,= a(X_{t}^{\theta,\varepsilon},\mu_{0}) dt + \varepsilon b(X_{t}^{\theta,\varepsilon},\sigma_{0}) dW_{t} + \varepsilon c(X_{t-}^{\theta,\varepsilon},\alpha_{0}) dJ_{t}^{\lambda} \\      
             X_{0}^{\theta,\varepsilon}  &\, =  x
            \end{aligned}
            \right.   , 
            \tag{1.1}\label{1.1}
\end{align}
where $\varepsilon > 0$, and $\Theta_{i},(i = 1,2,3)$ are smooth convex open sets on $\mathbb{R}^{d_{i}}$, with $d_{i}(i = 1,2,3)$ being natural numbers. Additionally, let $\theta_{0} = (\sigma_{0},\mu_{0},\alpha_{0})$ belong to $\Theta_{0} = \Theta_{1} \times \Theta_{2} \times \Theta_{3} \subset \mathbb{R}^{d}$, where $d = d_{1} + d_{2} + d_{3}$ is given, and the parameter space $\Theta$ is defined as $\Theta = \overline{\Theta}_{0}$. The domains of the functions $a, b, c$ are each $\mathbb{R} \times \overline{\Theta}_{i},(i = 1,2,3)$. 
where $\varepsilon >0$ is a known constant, $a: \mathbb{R}\times \overline{\Theta}_1 \to \mathbb{R}$, $b: \mathbb{R}\times \overline{\Theta}_2 \to \mathbb{R}$ and $c: \mathbb{R}\times \overline{\Theta}_3 \to \mathbb{R}$ with $\Theta_i \subset \mathbb{R}^{d_i}\ (i=1,2,3)$ are open convex subsets, respectively.  We denote by $d = d_1+d_2 + d_3$ and $\Theta := \overline{\Theta}_0$ with $\Theta_0=\Theta_1\times \Theta_2\times \Theta_3$, and assume that the true value of the parameters $\theta = (\sigma, \mu, \alpha)$, say $\theta_0 = (\sigma_0, \mu_0, \alpha_0)$, belongs to $\Theta_0: \theta_0\in \Theta_0$. 

The process $J^{\lambda} = (J_{t}^{\lambda})_{t \in [0,1]}$ is a compound Poisson process given by
\begin{align}
J_{t}^{\lambda}&:= \sum_{k = 1}^{N_{t}^{\lambda}}V_{k}, \nonumber
\end{align}
where $N^{\lambda} = (N_{t}^{\lambda})_{t \in [0,1]}$ is a Poisson process with intensity $\lambda > 0$.
 The random variables $V_{k}'s$ are independent random variables 
with a common probability density function $f_{\alpha_{0}}$, and they are independent 
of $(N_{t}^{\lambda})_{t \in [0,1]}$ and the standard Brownian motion $W = (W_{t})_{t \in [0,1]}$.

We consider 
the estimation of $\theta_0$ using adaptive Bayesian method and examine the asymptotic properties of the estimator
 under the conditions that $\varepsilon \to 0$, and $\lambda \to \infty$, as well as $h_n \to 0$ as $n \to \infty$.
These assumptions form a similar asymptotic scheme to that used in Kobayashi and Shimizu \cite{9}. 

In Bayesian estimation research for stochastic differential equations, simultaneous Bayes estimators 
and adaptive Bayes estimators for ergodic diffusion processes were proposed by
 Yoshida \cite{17}. Various studies have been conducted by Nomura and Uchida \cite{10}, Kaino and Uchida \cite{4},
  Uchida and Yoshida \cite{16}, and others. Adaptive estimators for ergodic diffusion processes were proposed by
  
 Uchida and Yoshida \cite{15}, leading to hybrid estimators such as those proposed by Kessler \cite{7}, 
  Kamatani and Uchida \cite{5}, Kaino and Uchida \cite{4}, Nomura and Uchida \cite{10}, and others.
   In parameter estimation for ergodic jump-diffusion processes, threshold estimation methods for Poissonian
 jump-diffusion processes were proposed by Shimizu and Yoshida \cite{12}, while simultaneous Bayes estimators and adaptive Bayes estimators for 
  the same model as in Shimizu and Yoshida \cite{12} were 
  proposed by Ogihara and Yoshida 
\cite{11}. Additionally, various previous studies such as Amorino and Gloter \cite{1}, 
Gloter {\rm et al} \cite{3} Shimizu \cite{14}, and Amorino and Gloter {\rm et al} \cite{2} exist.

We focused on Bayes estimator because, unlike the maximum likelihood method, 
it does not require optimization to compute the estimator. The reason for proposing adaptive Bayes estimator is to reduce the dimension of the integral variable needed to compute the estimator. By proposing a method that computes the parameters sequentially, 
we aim to reduce the dimension of the integral variable of the Bayes estimator and to make the computation of the estimate numerically easier.

The asymptotic behavior of Bayes estimators is shown by applying precise
 moment evaluation to the quasi-log-likelihood function as demonstrated in Yoshida \cite{17}.
In this study, moment evaluation is applied to the quasi-log-likelihood proposed by Kobayashi and Shimizu \cite{9}.
based on the inequalities that Kobayashi and Shimizu \cite{14} used to prove the asymptotic property of the maximum likelihood estimator.

The structure of this paper is as follows:
In Chapter 2, we discuss the definitions of necessary symbols and assumptions required. Chapter 3 elaborates on the asymptotic normality of adaptive Bayes estimators and considers specific examples of jump distributions that meet the assumptions necessary for achieving asymptotic normality. Chapter 4 conducts a numerical verification of the asymptotic
 behavior of Bayes estimators. Specifically, we generate 1000 independent discrete observations following equation (1.1), compute 1000 estimators based on each observation, and then calculate their biases and variances. The discussion on determining the optimal threshold, which is essential in threshold estimation methods, is based on Kobayashi and  Shimizu \cite{9}   and
 Shimizu \cite{13}. Chapter 5 lists the lemmas necessary for proving the asymptotic normality of Bayes estimators, while Chapter 6 presents the proofs of the lemmas mentioned in Chapter 5 and the proof of asymptotic normality for adaptive Bayes estimators.

\section{Assumptions and notations}
\subsection{Notations}
In this section, we prepare notations and make assumptions.
\begin{itemize}
\item We define the set $I_{x}$ as image of the mapping $t \mapsto x_{t}$ on $[0,1]$ and define the set $I_{x}^{\gamma}$ as follows:
\begin{align}
I_{x}^{\gamma} &:= 
\bigg\{y \in \mathbb{R} \mid \mathrm{dist}(y,I_{x}) = \inf_{x \in I_{x}}\lvert x - y \rvert < \gamma\bigg\}. \nonumber
\end{align}
Additionally, we define the set $\tilde{I}_{x}^{\gamma}$ as:
\begin{align}
\tilde{I}_{x}^{\gamma} &:= \bigg\{X_{t}^{\theta,\varepsilon} \in I_{x}^{\gamma}\quad \text{for all}\ t \in [0,1] \bigg\}. \nonumber
\end{align}

\item Let $p$ be any integer. For any random variable X and Y defined on $(\Omega,\mathcal{F})$ to $(\mathbb{R},\mathcal{B})$,
 we denote  $\mathbb{E}_{\theta_{0}}[\lvert X \rvert^{p}] \lesssim \mathbb{E}_{\theta_{0}}[\lvert Y \lvert^{p}]$ by 
\begin{align}
    &\mathbb{E}_{\theta_{0}}[\lvert X \rvert^{p}]  \leq C_{p}\mathbb{E}_{\theta_{0}}[\lvert Y \rvert^{p}], \nonumber
\end{align}
where $C_{p} > 0$ is a constant depending only on $p$.

\item Let $f$ be a function from the set $A (\subset \mathbb{R}^{p})$ to $\mathbb{R}$. For symbols $k = 1,2,3,4$, we define $\partial_{x}^{k}f$ as follows:
\begin{align}
    \partial_{x}f &:= (\partial_{x_{i}} f)_{1 \leq i \leq p}, \nonumber\\
    \partial_{x}^{2}f &:= (\partial_{x_{i}}\partial_{x_{j}}f)_{1 \leq i,j \leq p}, \nonumber\\
    \partial_{x}^{3}f &:= (\partial_{x_{i}}\partial_{x_{j}}\partial_{x_{k}}f)_{1 \leq i,j,k \leq p}, \nonumber\\
    \partial_{x}^{4}f &:= (\partial_{x_{i}}\partial_{x_{j}}\partial_{x_{k}}\partial_{x_{l}}f)_{1 \leq i,j,k,l \leq p}. \nonumber 
\end{align}

\item Let $R : \mathbb{R} \times (0,1]  \to \mathbb{R}$, be a function
satisfying that there exist a constant value $C > 0$ such that for any sequence $\{u_{n}\}_{n = 1}^{\infty}$ ,
\begin{align}
    |R(x,u_{n})| \lesssim u_{n} (1 + |x|)^{C}. \nonumber
\end{align}

\item Let $\bm{n} = (n_{1},\dots,n_{d})$ be a multi-index with $|\bm{n}|$ as $|\bm{n}| = n_{1} + \cdots + n_{d}$,
 and denote
\begin{align}
    \partial_{\bm{\theta}}^{\bm{n}} := \partial_{\theta_{1}}^{n_{1}}\cdots \partial_{\theta_{d}}^{n_{d}}. \nonumber
\end{align}

\item Let $C^{k,l}(\mathbb{R} \times A;\mathbb{R})$ be the family of functuion $f : \mathbb{R} \times A \to \mathbb{R}$ such that $f(x,a)$
is $k,l$ times continuously differentiable with respect to $x$ and $a$ respectively. 

 \item Let $C_{\uparrow}^{k,l}(\mathbb{R} \times A;\mathbb{R})$ 
 be a family of function $f \in C^{k,l}(\mathbb{R} \times A;\mathbb{R})$ such that
\begin{align}
&\sup_{\theta \in \Theta}| \partial_{x}^{m}\partial_{\bm{\theta}}^{\bm{n}} f(x,\theta) | = R(x,1). \nonumber
\end{align}
\item Let $C^{k,l,m}(\mathbb{R} \times Y \times A;\mathbb{R})$
be a family of function $f : \mathbb{R} \times Y \times A \to \mathbb{R}$ such that $f(x,y,\alpha)$ is
$k,l$ times continuously differentiable with respect to $x,y$ and $\alpha$.

\item Let $p$ be any nutural number and 
$v$ be a $\mathbb{R}^{p}$-valued vector.
Set $A := (a_{i})_{1 \leq i \leq p}, B := (b_{i,j})_{1 \leq i,j \leq p}$
 and $C := (c_{i,j,k})_{1 \leq i,j,k \leq p}$ as a 1, 2 and 3 - dimensional tensors, respectively. 
 The symbols $A[v]$, $B[v^{\otimes 2}]$ and $C[v^{\otimes 3}]$ define 
\begin{align}
    A[v] &:= \sum_{i = 1}^{p}a_{i}v_{i}, \nonumber\\
    B[v^{\otimes 2}] &:= \sum_{1 \leq i,j \leq p}b_{ij}v_{i}v_{j}, \nonumber\\
    C[v^{\otimes 3}] &:= \sum_{1 \leq i,j,k \leq p}c_{ijk}v_{i}v_{j}v_{k}, \nonumber 
\end{align}
respectively.
\end{itemize}

\subsection{Assumption}
We make the following assumptions:

\assum \label{a1} For $k = 0,1,...,4$, $\partial_{\mu}^{k}a(x,\mu_{0})$, $\partial_{\sigma}^{k}b(x,\sigma_{0})$, and $\partial_{\alpha}^{k}c(x,\alpha_{0})$ are Lipschitz continuous in $x$.
\assum \label{a2} 
\begin{align}
    &a(x,\mu) \in C_{\uparrow}^{6,4}(\mathbb{R} \times \overline{\Theta}_{1},\mathbb{R}),\quad b(x,\sigma) \in C_{\uparrow}^{6,4}(\mathbb{R} \times \overline{\Theta}_{2},\mathbb{R}),\quad c(x,\alpha) \in C_{\uparrow}^{4,1}(\mathbb{R} \times \overline{\Theta}_{3},\mathbb{R}).\nonumber
\end{align}
\assum \label{a3}
\begin{align}
    &\inf_{(x,\sigma) \in \mathbb{R} \times \overline{\Theta}_{1}} b(x,\sigma) > 0,\quad \inf_{(x,\alpha) \in \mathbb{R} \times \overline{\Theta}_{3}} | c(x,\alpha) | > 0. \nonumber
\end{align}
\rem \label{r_a_2}
Under \Aref{a1} and \Aref{a3}, there exists a constant \( C > 0 \) such that 
\begin{align}
    &\,\lvert c(x,\alpha_{0})^{-1} - c(y,\alpha_{0})^{-1} \rvert \leq   C \lvert x - y \rvert, \nonumber \nonumber
\end{align}
and 
\begin{align}
    &\sup_{\alpha \in \overline{\Theta}_{3}}\lvert 
    \log c(x,\alpha) \rvert = R(x,1). \nonumber
\end{align}

\assum \label{a4}
For any $p \geq 0$, $f_{\alpha_{0}} : \mathbb{R} \to \mathbb{R}$ satisfies
\begin{align}
    &\,\int_{\mathbb{R}}\lvert z \rvert ^{p}f_{\alpha_{0}}(z)dz < \infty. \nonumber
\end{align}
To describe \Aref{a5}, we add some symbols. Set $E := \mathbb{R}$ or $E := \mathbb{R}^{+}$ and denote 
$\psi : \mathbb{R} \times E \times \overline{\Theta}_{3} \to \mathbb{R}$ by
\begin{align}
    &\psi(x,y,\alpha) := 
    \left\{
\begin{array}{cc}
    \log \lvert \big(1/c(x,\alpha)\big)f_{\alpha} \big(x,y/c(x,\alpha)\big) \rvert & \mathrm{if} \quad \lvert \big(1/c(x,\alpha)\big)f_{\alpha} \big(x,y/c(x,\alpha)\big) \rvert \neq 0,\\
0 & \mathrm{otherwise}. \\
\end{array}
\right.\nonumber
\end{align}
In addition, let $(x_{t})_{t \in [0,1]}$ be a solution of the following ODE with the initial value $x \in \mathbb{R}$ :
\begin{align}
    \frac{dx_{t}}{dt} = a(x_{t},\mu_{0}),\quad x_{0} = x. \label{ode}
\end{align}

\assum \label{a5}
There exist positive constants $\chi_{0},\chi_{1},\chi_{2},\chi_{3}$ such that for all $\mu \in \overline{\Theta}_{1}, \sigma \in \overline{\Theta}_{2}, \alpha \in \overline{\Theta}_{3}$,
\begin{align}
    \int_{0}^{1} -\frac{\lvert a(x_{t},\mu) - a(x_{t},\mu_{0}) \rvert^{2}}{2} dt&\,\leq - \chi_{0} \lvert \lvert \mu - \mu_{0} \rvert \rvert, \nonumber\\
    -\frac{1}{2}\int_{0}^{1}\bigg[\frac{b^{2}(x_{t},\sigma_{0})}{b^{2}(x_{t},\sigma)} + \log b^{2}(x_{t},\sigma) \bigg]dt &\,\leq - \chi_{1} \lvert \lvert \sigma - \sigma_{0} \rvert \rvert,  \nonumber\\
    -\frac{1}{2}\int_{0}^{1}\bigg[\frac{\lvert a(x_{t},\mu) - a(x_{t},\mu_{0}) \rvert^{2}}{b^{2}(x_{t},\sigma_{0})} \bigg] dt&\,\leq - \chi_{2}\lvert \lvert \mu - \mu_{0} \rvert \rvert, \nonumber
\end{align}
and 
\begin{align}
    \int_{0}^{1}\int_{E} \psi(x_{t},y,\alpha)p(x_{t},y,\alpha_{0}) dydt&\,\leq -\chi_{3} \lvert \lvert \alpha - \alpha_{0} \rvert \rvert. \nonumber
\end{align}

\assum \label{a6}
For each $\alpha \in \overline{\Theta}_{3}$, $f_{\alpha}$ is positibe and continuous on \( \mathbb{R} \). Moreover, there exist constants $C > 0,\quad q > 0$ and $\delta >0$ such that 
\begin{align}
&\sup_{(x,\alpha) \in I_{x}^{\gamma} \times \Theta_{3}} \bigg\lvert \frac{\partial \psi}{\partial y}(x,y,\alpha)\bigg\rvert \leq C(1 + \lvert y \rvert^{q}),\quad y \in \mathbb{R}. \nonumber
\end{align}

\assum \label{a7}
For each $\alpha \in \overline{\Theta}_{3}$, $f_{\alpha}$ is positibe and continuous on \( \mathbb{R}^{+} \). Moreover, let $(f_{\alpha})_{\alpha \in \overline{\Theta}_{3}}$ satify the following conditions:
\begin{enumerate}
\item[(1)] \label{a7_2_b} For constants $q \geq 0$ and $\delta > 0$ such that,
\begin{align}
&\sup_{(x,\alpha) \in I_{x}^{\gamma} \times \overline{\Theta}_{3}}\bigg \lvert \frac{\partial \psi}{\partial y}(x,y,\alpha)\bigg \rvert \leq O\bigg(\frac{1}{\lvert y \rvert^{q}}\bigg)\quad as \quad q \to 0.\nonumber
\end{align}
\item[(2)] \label{a7_2_c} There exists a positive constant $\delta > 0$ such that for any $C_{1} > 0$ and $C_{2} \geq 0$, the mapping from $\mathbb{R}$ to $I_{x}^{\gamma}$ given by
\begin{align}
&x \mapsto \int_{E} \sup_{\alpha \in \Theta_{3}}\bigg\lvert \frac{\partial \psi}{\partial y}(x,C_{1}y + C_{2},\alpha)\bigg\rvert f_{\alpha_{0}}(y)dy, \nonumber
\end{align}
is continuous.
\end{enumerate}

\assum \label{a8}
The function $\psi(x,y,\alpha) : \mathbb{R} \times E \times \overline{\Theta}_{3} \to \mathbb{R}$ belong to $C^{0,1,4}(\mathbb{R},E,\overline{\Theta}_{3})$,
 where $E = \mathbb{R}_{+}$ or $E = \mathbb{R}$. 
 For each $k = 0,1,...,4$, there exists a constant $C > 0$ such that for any multi-index $\bm{k} := (k_{1},\cdots,k_{d_{3}})$ with $\lvert \bm{k} \rvert \leq 4$,
\begin{align}
&\sup_{\alpha \in \overline{\Theta}{3}} \int_{E}\bigg\lvert \partial_{\bm{\alpha}}^{\bm{k}}\psi(x,y,\alpha) \bigg\rvert^{p}f_{\alpha_{0}}(y)dy = R(x,1), \nonumber
\end{align}
and for $k = 0,1,2$, the mapping
\begin{align}
&x \mapsto \int_{E}\partial_{\alpha}^{k}\psi(x,c(x,y),\alpha)f_{\alpha_{0}}(y)dy, \nonumber
\end{align}
is continuous on $\mathbb{R} \times \overline{\Theta}_{3}$.

\assum \label{a9}
\begin{align}
&1/n\varepsilon^{2} = O(1),\quad \varepsilon \to 0,\quad n \to \infty, \nonumber
\end{align}
and for some $\delta \in (0,1)$,
\begin{align}
&\lambda^{2}/n^{1 - \delta} = o(1),\quad \lambda \to \infty,\quad n \to \infty, \nonumber\\
&\varepsilon \lambda^{1 + \delta} = o(1),\quad \lambda \to \infty,\quad \varepsilon \to 0. \nonumber
\end{align}

\rem
In the following, if there is no declaration for the variable taking the limit,
 it is assumed that the situation with $n \to \infty, \varepsilon \downarrow 0, \lambda \to \infty$ is considered.

To desciribe \Aref{a10}, we add some symbols. For $\mu \in \overline{\Theta}_{2}$ and $\theta \in \Theta$, let $d_{2} \times d_{2}$ matrix $I_{0}(\mu)$ and the $d \times d$ matrix $I(\theta)$ be respectively given by
\begin{align}
    I_{0}(\mu)& := \left(\int_{0}^{1}\partial_{\mu_{i}}a(x_{t},\mu)\partial_{\mu_{j}}a(x_{t},\mu) dt\right)_{1 \leq i,j \leq d_{1}}, \nonumber\\
    I(\theta) & :=
    \begin{pmatrix}
    I_{1}(\theta) & 0 & 0\\
    0 & I_{2}(\theta) & 0\\
    0 & 0 & I_{3}(\theta)
    \end{pmatrix},
    \nonumber
\end{align}\\
    with
\begin{align}
    I_{1}(\theta) &= \left(\int_{0}^{1}\frac{\partial_{\mu_{i}}a(x_{t},\mu_{0})\partial_{\mu_{j}}
    a(x_{t},\mu_{0})}{b^{2}(x_{t},\sigma_{0})}dt\right)_{1 \leq i,j \leq d_{1}}, \nonumber\\
    I_{2}(\theta) &= \left(2\int_{0}^{1}\frac{\partial_{\sigma_{i}}b(x_{t},\sigma_{0})
    \partial_{\sigma_{j}}b(x_{t},\mu_{0})}{b^{2}(x_{t},\sigma_{0})}dt\right)_{1 \leq i,j \leq d_{2}}, \nonumber\\
    I_{3}(\theta) &= \left(\int_{0}^{1}\int \frac{\partial \psi}{\partial \alpha_{i}}
    \frac{\partial \psi}{\partial \alpha_{j}}(x_{t},c(x_{t},\alpha_{0})z,\alpha)f_{\alpha_{0}}(z)dzdt\right)_{1 \leq i,j \leq d_{3}}. \nonumber
\end{align}

\assum \label{a10}
If $\mu_{0} \in \Theta_{2}$ and $\theta_{0} \in \Theta_{0}$, then $I(\mu_{0})$ and $I(\theta_{0})$ are positive definite.

\section{Main results}
\subsection{Main theorem}
First, we introduce how to construct adaptive Bayes estimator. Denote $\bm{X}_{n} := \{X_{t_{k}^{n}}\}_{k = 0}^{n}$
and $p_{\theta}(\bm{x}_{n}), \bm{x}_{n} \in \mathbb{R}^{n}$
as joint probability density function of $\bm{X}_{n}$. Also, the parameter $\theta = (\sigma,\mu,\alpha)$ is assumed to follow a prior distribution $\pi(\theta)$.
In this case, the simultaneous Bayes estimator $\widehat{\theta}_{n,\varepsilon,\lambda}^{S}$ obtained by minimizing the least squares loss has log likelihood
 $\log p_{\theta}(\bm{X}_{n})$ is given below:
\begin{align}
  \widehat{\theta}_{n,\varepsilon,\lambda}^{S} &:= \frac{\int_{\overline{\Theta}}\theta \exp\{\log p_{\theta}(\bm{X}_{n})\}
\pi(\theta) d\theta}{\int_{\overline{\Theta}}
\exp\{\log p_{\theta}(\bm{X}_{n})\}
\pi(\theta)d\theta}. \nonumber
\end{align}
As in Yoshida \cite{17} and Ogiwara and Yoshida \cite{11},
an estimator in which the log likelihood is replaced 
by a contrast function is expected to show the same asymptotic behavior 
as the maximum likelihood estimator based on the quasi-log likelihood.
However, in this paper, we propose an initial Bayes estimator and an adaptive Bayes estimator
that adaptively estimate each parameter in order to reduce the cost of the integral calculations as in Uchida and Yoshida \cite{16}.

Second, we define the contrast functions $\Psi_{n,\varepsilon}^{(0)}(\mu)$, $\Psi_{n,\varepsilon}^{(1)}(\mu,\sigma)$, and $\Psi_{\lambda}^{(2)}(\alpha)$ proposed by Kobayashi and Shimizu \cite{9} and Shimizu
\cite{17} for estimating the parameters $(\sigma,\mu,\alpha)$:
\begin{align}
   \Psi_{n,\varepsilon}^{(0)}(\mu)&\,:= -\frac{1}{2}\sum_{k = 1}^{n}\bigg\{\bigg\lvert 
   \Delta_{k}^{n}X^{\theta,\varepsilon} - \frac{1}{n}a(X_{t_{k-1}^{n}}^{\theta,\varepsilon},\mu)
    \bigg\rvert^{2} \bigg/ \bigg(\frac{1}{n}\varepsilon^{2}\bigg)\bigg\} \bm{1}_{C_{k}^{n}}, \label{contrast0}\\
    \Psi_{n,\varepsilon}^{(1)}(\mu,\sigma)&\,:= - \frac{1}{2}\sum_{k = 1}^{n}\bigg\{\frac{\lvert \Delta_{k}^{n}X^{\theta,\varepsilon} - \frac{1}{n} a(X_{t_{k-1}^{n}}^{\theta,\varepsilon},
    \mu) \rvert^{2}}{\frac{1}{n}\varepsilon^{2}b^{2}(X_{t_{k-1}^{n}}^{\theta,\varepsilon},\sigma)}
     + \log b^{2}(X_{t_{k-1}^{n}}^{\theta,\varepsilon},\sigma)\bigg\}\bm{1}_{C_{k}^{n}}, \label{contrast1}\\
    \Psi_{\lambda}^{(2)}(\alpha)&\,:= \sum_{k = 1}^{n}\psi(X_{t_{k-1}^{n}}^{\theta,\varepsilon},\Delta_{k}^{n}X^{\theta,\varepsilon} / \varepsilon,\alpha)\bm{1}_{D_{k}^{n}}, \label{contrast2}
\end{align}
where $n \in \mathbb{N}$ and $k = 1,2,...,n$, and $\Delta_{k}^{n}X^{\theta,\varepsilon}$ is given by
\begin{align}
\Delta_{k}^{n}X^{\theta,\varepsilon} &:= X_{t_{k}^{n}}^{\theta,\varepsilon} - X_{t_{k-1}^{n}}^{\theta,\varepsilon}, \nonumber
\end{align}
and 
\begin{align}
    C_{k}^{n} :=&\, \left\{
        \begin{array}{ll}
            \{\lvert \Delta_{k}^{n}X^{\theta,\varepsilon} \rvert < \varepsilon v_{nk}/n^{\rho} \}&\, \mathrm{under}\,\Aref{a6}\\  
            \{ \Delta_{k}^{n}X^{\theta,\varepsilon} < \varepsilon v_{nk}/n^{\rho}\}&\,\mathrm{under}\,\Aref{a7}
        \end{array}
        \right. ,\nonumber\\
        D_{k}^{n}:=&\, \left\{
            \begin{array}{ll}
                \{\lvert \Delta_{k}^{n}X^{\theta,\varepsilon} \rvert \geq \varepsilon v_{nk}/n^{\rho} \}&\, \mathrm{under}\,\Aref{a6}\\  
                \{ \Delta_{k}^{n}X^{\theta,\varepsilon} \geq \varepsilon v_{nk}/n^{\rho}\}&\,\mathrm{under}\,\Aref{a7}
            \end{array}
            \right. ,
        \nonumber
\end{align}
where $\rho \in (0,\frac{1}{2})$ and $v_{n1},\cdots,v_{nn}$ are such that $v_{nk}$ is $\mathcal{F}_{t_{k-1}^{n}}$-measurable for each $n \in \mathbb{N}$ and $k = 1,2,...,n$, with positive constants $v_{1}$ and $v_{2}$ satisfying $0 < v_{1} \leq v_{nk} \leq v_{2}$.

Using the above contrast functions, we now define initial Bayes estimator $\widehat{\mu}_{n,\varepsilon}^{(0)}$ for the drift term parameter $\mu$ 
and adaptive Bayes estimator $\widehat{\theta}_{n,\varepsilon,\lambda} := (\widehat{\sigma}_{n,\varepsilon},\widehat{\mu}_{n,\varepsilon},\widehat{\alpha}_{\lambda})$ for the parameter $\theta = (\sigma,\mu,\alpha)$. At first,
 initial Bayes estimator $\widehat{\mu}_{n,\varepsilon}^{(0)}$ is defined as follows:
\begin{align}
\widehat{\mu}_{n,\varepsilon}^{(0)} &= \left(\int_{\overline{\Theta}_{2}}\mu \exp\{\Psi_{n,\varepsilon}^{(0)}(\mu)\}\pi_{2}(\mu)d\mu\right) \cdot \left(\int_{\overline{\Theta}_{2}}\exp\{\Psi_{n,\varepsilon}^{(0)}(\mu)\}\pi_{2}(\mu)d\mu\right)^{-1},\nonumber
\end{align}
and using this, adaptive Bayes estimator $\widehat{\theta}_{n,\varepsilon,\lambda} := (\widehat{\sigma}_{n,\varepsilon},\widehat{\mu}_{n,\varepsilon},\widehat{\alpha}_{\lambda})$ is given by
\begin{align}
\widehat{\sigma}_{n,\varepsilon} &= \left(\int_{\overline{\Theta}_{1}}\sigma \exp\{\Psi_{n,\varepsilon}^{(1)}(\sigma,\widehat{\mu}_{n,\varepsilon}^{(0)})\} \pi_{1}(\sigma)d\sigma\right) \cdot \left(\int_{\overline{\Theta}{1}}\exp\{\Psi_{n,\varepsilon}^{(1)}(\sigma,\widehat{\mu}_{n,\varepsilon}^{(0)}\}\pi_{1}(\sigma)d\sigma\right)^{-1},\nonumber \\
\widehat{\mu}_{n,\varepsilon} &= \left(\int_{\overline{\Theta}_{2}}\mu \exp\{\Psi_{n,\varepsilon}^{(1)}(\widehat{\sigma}_{n,\varepsilon},\mu)\}\pi_{2}(\mu)d\mu\right) \cdot \left(\int_{\overline{\Theta}_{2}}\exp\{\Psi_{n,\varepsilon}^{(1)}(\widehat{\sigma}_{n,\varepsilon},\mu)\}\pi_{2}(\mu)d\mu\right)^{-1},\nonumber \\
\widehat{\alpha}_{\lambda} &= 
\left(\int_{\overline{\Theta}_{3}}\alpha \exp\{\Psi_{\lambda}^{(2)}(\alpha)\} \pi_{3}(\alpha)d\alpha\right) \cdot \left(\int_{\overline{\Theta}_{3}}\exp\{\Psi_{\lambda}^{(2)}(\alpha)\}\pi_{3}(\alpha)d\alpha\right)^{-1},\nonumber
\end{align}
where $\pi_{1},\pi_{2},\pi_{3}$ are the prior density functions of $\sigma,\mu,\alpha$. 

\rem 
The adaptive Bayes estimator is essentially defined by using the simultaneous likelihood $H_{n,\varepsilon,\lambda}(\theta) := \Psi_{n,\varepsilon}(\sigma,\mu) + \Psi_{\lambda}(\alpha)$  as follows :
\begin{align}
    \overline{\sigma}_{n,\varepsilon} &= \left(\int_{\overline{\Theta}_{1}}\sigma \exp\{H_{n,\varepsilon,\lambda}
    (\sigma,\mu^{\star},\alpha^{\star})\} \pi_{1}(\sigma)d\sigma\right) \cdot
     \left(\int_{\overline{\Theta}{1}}\exp\{H_{n,\varepsilon,\lambda}(\sigma,\mu^{\star},\alpha^{\star})\}\pi_{1}(\sigma)d\sigma\right)^{-1},\nonumber \\
    \overline{\mu}_{n,\varepsilon} &= \left(\int_{\overline{\Theta}_{2}}\mu \exp\{H_{n,\varepsilon,\lambda}(\widehat{\sigma}_{n,\varepsilon},\mu,\alpha^{\star})\}\pi_{2}(\mu)d\mu\right) \cdot \left(\int_{\overline{\Theta}_{2}}\exp\{H_{n,\varepsilon,\lambda}(\widehat{\sigma}_{n,\varepsilon},\mu,\alpha^{\star})\}\pi_{2}(\mu)d\mu\right)^{-1},\nonumber \\
    \overline{\alpha}_{\lambda} &= 
\left(\int_{\overline{\Theta}_{3}}\alpha \exp\{H_{n,\varepsilon,\lambda}(\widehat{\sigma}_{n,\varepsilon},\widehat{\mu}_{n,\varepsilon},\alpha)\} 
\pi_{3}(\alpha)d\alpha\right) \cdot \left(\int_{\overline{\Theta}_{3}}\exp\{H_{n,\varepsilon,\lambda}(\widehat{\sigma}_{n,\varepsilon},\widehat{\mu}_{n,\varepsilon},\alpha)\}\pi_{3}(\alpha)d\alpha\right)^{-1},\nonumber
    \end{align}
where $\mu^{\star},\alpha^{\star}$ is any dummy of $\mu \in \overline{\Theta}_{2},\sigma \in \overline{\Theta}_{3}$. However, for technical reasons, when considering the small noise framework,
 it is necessary to assign an estimator of moment convergence to $\mu_{0}$ to $H(\sigma,\mu,\alpha)$ for the part of $\mu$ when computing $\overline{\sigma}_{n,\varepsilon}$ at the beginning. 
Therefore, it is necessary to propose an initial Bayes estimator $\widehat{\mu}_{n,\varepsilon}$
 to estimate $\mu$ separately. For this purpose, we constructed an
  initial Bayes estimator $\widehat{\mu}_{n,\varepsilon}^{(0)}$ based on the least-squares error function $\Psi_{n,\varepsilon}^{(0)}$ proposed by Shimizu \cite{14}.

In this paper, we aim to demonstrate the asymptotic normality of $\widehat{\theta}_{n,\varepsilon,\lambda}$.

\begin{thm}\label{thm1}
Suppose \Aref{a1} - \Aref{a5}, \Aref{a8} - \Aref{a10}, and either \Aref{a6} or \Aref{a7}. Take $\rho$ as follows:

\begin{itemize}
    \item[(i)] Under \Aref{a6}, $\rho \in (0,\frac{1}{2})$.
    \item[(ii)] Under \Aref{a7}, $\rho \in (0,\frac{1}{4q})$ where $q$ is the constant in \Aref{a7} (2).
\end{itemize}
Then,
\begin{align}
\varepsilon^{-1}(\widehat{\mu}_{n,\varepsilon}^{(0)} - \mu_{0}) \ind \mathcal{N}(0,I_{0}(\mu_{0})^{-1}), \nonumber\\
\begin{pmatrix}
\sqrt{n}(\widehat{\sigma}_{n,\varepsilon} - \sigma_{0})\\
\varepsilon^{-1}(\widehat{\mu}_{n,\varepsilon} - \mu_{0})\\
\sqrt{\lambda}(\widehat{\alpha}_{\lambda} - \alpha_{0})
\end{pmatrix}
\ind \mathcal{N}\big(0,I(\theta_{0})^{-1}\big), \nonumber
\end{align}
as $n \to \infty$, $\varepsilon \to 0$, $\lambda \to \infty$, and $\lambda \int_{\lvert z \rvert \leq 4v_{2}/cn^{\rho}}f_{\alpha_{0}}(z)dz \to 0$, where 
$c = \sup_{t \in [0,1]}\lvert c(x_{t},\alpha_{0})\rvert.$

\end{thm}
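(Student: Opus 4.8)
The plan is to follow the quasi-likelihood Bayesian programme of Yoshida \cite{17}, in which asymptotic normality of a Bayes estimator is extracted from two ingredients: a locally asymptotically quadratic (LAQ) expansion of the normalized contrast random field around the true value, and a polynomial-type large deviation inequality (PLDI) controlling that field in the tails. Since the estimators are built adaptively, I would treat the three contrasts in the order in which they are used, reducing each convergence statement to these two ingredients for the corresponding marginal field.

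For the initial estimator I would introduce the normalized random field
\begin{align}
\mathbb{Z}^{(0)}_{n,\varepsilon}(u) := \exp\big\{\Psi^{(0)}_{n,\varepsilon}(\mu_0+\varepsilon u)-\Psi^{(0)}_{n,\varepsilon}(\mu_0)\big\}, \nonumber
\end{align}
with $u$ ranging over the shrinking set $\{u:\mu_0+\varepsilon u\in\overline{\Theta}_2\}$. A second-order Taylor expansion of $\Psi^{(0)}_{n,\varepsilon}$ in $\mu$, combined with \Aref{a1}, \Aref{a2} and the moment bounds of Kobayashi--Shimizu \cite{9}, should give
\begin{align}
\log\mathbb{Z}^{(0)}_{n,\varepsilon}(u)=\Delta^{(0)}_{n,\varepsilon}[u]-\tfrac12 I_0(\mu_0)[u^{\otimes2}]+o_{\mathbb{P}}(1), \nonumber
\end{align}
where the linear coefficient $\Delta^{(0)}_{n,\varepsilon}$ is a sum over the continuous set $C^n_k$ of normalized drift residuals satisfying $\Delta^{(0)}_{n,\varepsilon}\ind\mathcal{N}(0,I_0(\mu_0))$ by a martingale central limit theorem. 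The negligibility of the jump increments in this sum is exactly where the threshold hypothesis $\lambda\int_{|z|\le 4v_2/(cn^\rho)}f_{\alpha_0}(z)\,dz\to0$ and the rate balance in \Aref{a9} enter: they ensure that $\bm{1}_{C^n_k}$ discards the jump-carrying increments with asymptotically negligible error.

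To pass from the field to the estimator I would establish the PLDI
\begin{align}
\mathbb{E}_{\theta_0}\Big[\sup_{|u|\ge R}\mathbb{Z}^{(0)}_{n,\varepsilon}(u)^{1/2}\Big]\lesssim R^{-L} \nonumber
\end{align}
for arbitrarily large $L$, using the separation estimates of \Aref{a5} in the bulk and the growth and moment controls of \Aref{a2} and \Aref{a8}. Writing $\varepsilon^{-1}(\widehat{\mu}^{(0)}_{n,\varepsilon}-\mu_0)$ as the ratio $\int u\,\mathbb{Z}^{(0)}_{n,\varepsilon}(u)\pi_2\,du \big/ \int \mathbb{Z}^{(0)}_{n,\varepsilon}(u)\pi_2\,du$, the LAQ expansion together with the PLDI and the positivity of $\pi_2$ at $\mu_0$ yields the first convergence, since for a Gaussian quadratic limit field the ratio equals $I_0(\mu_0)^{-1}\Delta^{(0)}\sim\mathcal{N}(0,I_0(\mu_0)^{-1})$. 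For the adaptive stages I would repeat this analysis with $\Psi^{(1)}_{n,\varepsilon}$ (rates $\sqrt{n}$ for $\sigma$ and $\varepsilon^{-1}$ for $\mu$) and $\Psi^{(2)}_\lambda$ (rate $\sqrt{\lambda}$ for $\alpha$), the new point being that plugging the previous-stage estimators (e.g.\ $\widehat{\mu}^{(0)}_{n,\varepsilon}$ into $\Psi^{(1)}$ when computing $\widehat{\sigma}_{n,\varepsilon}$) is asymptotically harmless: tightness of $\varepsilon^{-1}(\widehat{\mu}^{(0)}_{n,\varepsilon}-\mu_0)$, a Taylor expansion in $\mu$, and $1/(n\varepsilon^2)=O(1)$ from \Aref{a9} force the perturbation to be $o_{\mathbb{P}}(1)$. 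The block-diagonal form of $I(\theta_0)$ reflects the asymptotic orthogonality of the three scores --- diffusion from the normalized quadratic variation, drift from the first-order residuals, jump from the increments isolated by $D^n_k$ --- whose mixed brackets vanish in the limit, while \Aref{a10} guarantees the limiting Gaussian is non-degenerate.

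The hardest part will be the PLDI carried out uniformly across the three coupled limits $n\to\infty$, $\varepsilon\to0$, $\lambda\to\infty$ simultaneously, together with a careful accounting of the threshold misclassification. One must show that $\bm{1}_{C^n_k}$ and $\bm{1}_{D^n_k}$ split the increments into a purely continuous part and a jump-carrying part with total error negligible relative to every normalization at once; this is precisely where the rate restrictions of \Aref{a9} and the condition on $\lambda\int f_{\alpha_0}$ are consumed, and controlling this error uniformly in $u$ over the unbounded rescaled parameter set is the crux of the argument.
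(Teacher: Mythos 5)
Your overall frame --- Yoshida's LAQ-plus-PLDI programme, the ratio representation of the rescaled Bayes error, and a stagewise treatment of the adaptive steps --- is the same as the paper's, but you propose to run that machinery directly on the thresholded contrasts $\Psi_{n,\varepsilon}^{(0)},\Psi_{n,\varepsilon}^{(1)},\Psi_{\lambda}^{(2)}$, and this is where the plan breaks down. The polynomial large deviation inequality needed for Yoshida's Theorem 10 must hold with a constant $C_{L}$ uniform over $n,\varepsilon,\lambda$, i.e.\ $\sup_{n,\varepsilon,\lambda}\mathbb{P}\big(\sup_{u}Z[u]\geq e^{-r/2}\big)\leq C_{L}/r^{L}$ for every $r>0$. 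For the thresholded fields the only control you have on jump misclassification is the hypothesis $\lambda\int_{\lvert z\rvert\leq 4v_{2}/cn^{\rho}}f_{\alpha_{0}}(z)dz\to 0$, which is rate-free: the event that a genuine jump falls below the threshold has probability of exactly this order, it does not decay in $r$ at all, and it admits no uniform-in-$(n,\varepsilon,\lambda)$ bound. Consequently the misclassification error can be killed in probability along the limit, but it cannot be absorbed into a PLDI or into the $\sup_{n,\varepsilon,\lambda}\mathbb{E}[\cdot]^{p}<\infty$ moment estimates that the PLDI rests on. The paper's proof is organized precisely to avoid this: it introduces ``ideal'' contrasts $\tilde{\Psi}_{n,\varepsilon}^{(0)},\tilde{\Psi}_{n,\varepsilon}^{(1)},\tilde{\Psi}_{\lambda}^{(2)}$ built from the true (unobservable) jump classification $J_{k,0}^{n}=\{\Delta_{k}^{n}N^{\lambda}=0\}$, $J_{k,1}^{n}=\{\Delta_{k}^{n}N^{\lambda}=1\}$, proves all moment estimates (Lemmas \ref{moment_est_contrast0}--\ref{moment_est_contrast3}) and the LAQ-plus-PLDI, hence joint asymptotic normality and moment bounds (Lemma \ref{ideal_AN}), for these ideal fields only, and then transfers the conclusion by a separate argument (Lemma \ref{error_conv_Z_and_tildeZ}): for the diffusion/drift contrasts, with probability tending to one the thresholding agrees with the true classification, so the actual and ideal contrasts are identically equal as functions of the parameter and the prior-weighted integrals coincide; for the jump contrast, a sup-norm bound on $\Psi_{\lambda}^{(2)}-\tilde{\Psi}_{\lambda}^{(2)}$ (Lemma \ref{error_conv_contrast3}) makes the difference of integrals $o_{p}(1)$. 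The theorem then follows from the continuous mapping theorem. Without this ideal/actual decoupling, or an equivalent device, your PLDI step cannot be carried out under the stated assumptions.

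A second, independent gap is your treatment of the adaptive plug-in. You argue that tightness of $\varepsilon^{-1}(\widehat{\mu}_{n,\varepsilon}^{(0)}-\mu_{0})$ plus a Taylor expansion and $1/n\varepsilon^{2}=O(1)$ makes the substitution into $\Psi^{(1)}$ harmless. Tightness suffices for the finite-dimensional LAQ expansion, but not for the next stage's PLDI: the moment estimates feeding it (Lemma \ref{moment_est_contrast1}, and Lemma \ref{moment_est_contrast2} for the third stage) contain the plug-in error raised to arbitrary powers --- e.g.\ terms of the form $\varepsilon^{-1}\sum_{k}\tilde{\eta}_{k}^{n}(\mu_{0})\big(a(X_{t_{k-1}^{n}}^{\theta,\varepsilon},\tilde{\mu}_{n,\varepsilon})-a(X_{t_{k-1}^{n}}^{\theta,\varepsilon},\mu_{0})\big)\partial_{\sigma}b^{2}(X_{t_{k-1}^{n}}^{\theta,\varepsilon},\sigma_{0})^{-1}$ must be bounded in $L^{p}$ for every $p$, uniformly in $n,\varepsilon,\lambda$ --- so one needs $\sup_{n,\varepsilon,\lambda}\mathbb{E}_{\theta_{0}}\big[\lvert\varepsilon^{-1}(\tilde{\mu}_{n,\varepsilon}^{(0)}-\mu_{0})\rvert^{p}\big]<\infty$ for all $p$, i.e.\ the moment convergence \eqref{moment_conv_ideal_initial} (and \eqref{moment_conv_adaptive_sigma} at the next stage). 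This is exactly what Yoshida's Theorem 10 (b) supplies for Bayes-type estimators, and it is the reason the adaptive scheme plugs in Bayes estimators rather than merely consistent, tight ones; your proposal never establishes these uniform polynomial moment bounds.
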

\rem \label{rem_th_3_1}
Unlike Kobayashi and Shimizu \cite{9}, we imposed a condition 
that there exists $\delta \in (0,1)$ such that $\lambda^{2}/n^{1 - \delta} = o(1)$ under \Aref{a8} and \Aref{a9},
 which affects the conditions on the threshold $\rho$. However, as is stated in Section 3 of Kobayashi and Shimizu \cite{9},
  the value of the threshold is essentially dependent on the value of $q$.
   Therefore, the constraints on the threshold essentially depends on the same as those in Kobayashi and Shimizu \cite{9}.
\subsection{Examples}
In this section, we shall give examples of jump distributions that satisfy \Aref{a6} - \Aref{a8} under \Aref{a1} - \Aref{a5}.
We omit the discussion of \Aref{a7} and \Aref{a8} as it is addressed in Chapter 5 of Kobayashi and Shimizu \cite{9}. 
We make the simplifying assumption that $c(x,\alpha) > 0,\quad (x,\alpha) \in \mathbb{R} \times \overline{\Theta}_{3}$ and that the derivatives with respect to $x$ and $\alpha$ are uniformly continuous. Note that the set $E$ is defined as $E =\mathbb{R}$ or $E = \mathbb{R}^{+}$ 
and represents the domain of $\{f_{\alpha}\}_{\alpha \in \overline{\Theta}_{3}}$. 
From \Aref{a5}, it follows that for any $(x,\alpha) \in \mathbb{R} \times \Theta_{3}$, 
$y \in E$ if and only if $y / c(x,\alpha) \in E$. We treat the normal, gamma, and inverse Gaussian distributions as concrete examples of jump distributions, which are discussed in Examples 3.2.1, 3.2.2, and 3.2.3, respectively.
Similarly, it can be confirmed that Weibull distribution and log-normal distribution also satisfies \Aref{a8}.
\subsubsection{Normal Distribution}\label{ex1}
Let $\Theta_{3}$ be a subset of $\mathbb{R} \times \mathbb{R}_{+} \times \mathbb{R}^{d_{2} - 2}$, which is
open and convex with a smooth boundary and let $f_{\alpha}(y)$ be as follows:
\begin{align}
    f_{\alpha}(y) = &\, - \frac{1}{\sqrt{2 \pi \alpha_{2}}}\exp\left\{-\frac{(y/c(x,\alpha) - \alpha_{1})^{2}}{ 2\alpha_{2}}\right\},\quad \alpha \in \overline{\Theta}_{3}. \nonumber
\end{align} 
Then,
\begin{align}
    \psi(x,y,\alpha) =&\, - \log c(x,\alpha) - \frac{1}{2}\log (2\pi \alpha_{2}) - \frac{\lvert y/c(x,\alpha) - \alpha_{1} \rvert^{2}}{2\alpha_{2}},\quad (x,y,\alpha) \in \mathbb{R} \times \mathbb{R} \times \overline{\Theta}_{3}. \nonumber
\end{align}
Let us confirm \Aref{a8}. For $k = 0,1,...,4$ and $p \in \mathbb{N}$, we have
\begin{align}
   \partial_{\alpha}^{k}\psi(x,y,\alpha) =&-\partial_{\alpha}^{k} \log c(x,\alpha) - \frac{1}{2\alpha_{2}}\left\{y^{2}\cdot \partial_{\alpha}^{k}\left(c(x,\alpha)^{-2} \right)  - 2 \alpha_{1} y \partial_{\alpha}^{k}(c(x,\alpha)^{-1}) + \partial_{\alpha}^{k}(\alpha_{2}^{2})\right\}, \nonumber\\
    \partial_{\alpha_{2}}^{k}\psi(x,y,\alpha) =&\, - \frac{1}{2} \partial_{\alpha_{2}}^{k} \left(\log \alpha_{2} \right) + 
    \partial_{\alpha_{2}}^{k}(\alpha_{2}^{-1})\left\{-\frac{1}{2} 
     \left(\frac{y}{c(x,\alpha)} - \alpha_{1}\right)^{2} \right\}, \nonumber
\end{align}
and 
\begin{align}
    \partial_{\alpha_{1}}\psi(x,y,\alpha) =&\, \frac{1}{\alpha_{2}}\left(\alpha_{1} - \frac{y}{c(x,\alpha)}\right), \quad \partial_{\alpha_{1}}^{2}\psi(x,y,\alpha) = \frac{1}{\alpha_{2}},\quad \partial_{\alpha_{1}}^{3}\psi(x,y,\alpha) = \partial_{\alpha_{1}}^{4}\psi(x,y,\alpha) = 0. \nonumber
\end{align}
\Aref{a8} is satisfied, by Remark 2.1 and \Aref{a2}.
\subsubsection{Gamma Distribution}\label{ex2}
Let $\Theta_{3}$ be a subset of $\mathbb{R}_{+} \times (1,\infty) \times \mathbb{R}^{d_{3} - 2}$, which is
open and convex with a smooth boundary and let $f_{\alpha}(y)$ be as follows:
\begin{align}
    f_{\alpha}(z) & =
            \frac{1}{\Gamma(\alpha_{2})\alpha_{1}^{\alpha_{2}}}z^{\alpha_{2} -1}\exp\{- z/\alpha_{1}\}, \quad (z \geq 0),
         \nonumber
\end{align}
for $\alpha \in \overline{\Theta}_{3}$. Then,
\begin{align}
    \psi(x,y,\alpha) =&\,-\log c(x,\alpha) - \log \Gamma(\alpha_{2}) - \alpha_{2}\log \alpha_{1} + (\alpha_{2} - 1)\log y - \frac{y}{\alpha_{1}},\nonumber
\end{align}
for $(x,y,\alpha) \in \mathbb{R} \times \mathbb{R}_{+} \times \overline{\Theta}_{3}$. Let us confirm \Aref{a8}. The derivatives of $\psi(x,y,\alpha)$ on the parameters $\alpha,\alpha_{1}$ are given by
\begin{align}
    \partial_{\alpha}^{k}\psi(x,y,\alpha) =&\, \alpha_{2}\partial_{\alpha}^{k}\log c(x,\alpha)  -\frac{y}{\alpha_{1}}
    \partial_{\alpha}^{k}\left(c^{-1}(x,\alpha)\right), \nonumber\\
    \partial_{\alpha_{1}}^{k}\psi(x,y,\alpha) =&\, -\partial_{\alpha_{1}}^{k}\left( \Gamma(\alpha_{1}) \right) - \alpha_{2}\partial_{\alpha_{1}}^{k}\left(\log \alpha_{1} \right) - \frac{y}{c(x,\alpha)}\partial_{\alpha_{1}}^{k}(\alpha_{1}^{-1}), \nonumber
\end{align}
for $k = 1,2,...,4$. Therefore, we have that, for $k = 1,2,...,4$ and any $p \in \mathbb{N}$, 
\begin{align}
    &\,\int_{\mathbb{R}} \left\lvert 
    \partial_{\alpha}^{k}\left( \psi(x,y,\alpha) \right) \right\rvert^{p} f_{\alpha_{0}}(y)dy = R(x,1), \nonumber\\
    &\,\int_{\mathbb{R}} \left\lvert \partial_{\alpha_{1}}^{k}\left( \psi(x,y,\alpha) \right) \right\rvert^{p} f_{\alpha_{0}}(y)dy = R(x,1). \nonumber
\end{align}
The derivatives of $\psi(x,y,\alpha)$ on the parameters $\alpha_{2}$ are also given by
\begin{align}
    &\,\partial_{\alpha_{2}}\psi(x,y,\alpha) = \log c(x,\alpha) - \log \Gamma(\alpha_{1}) + \log y, \nonumber\\
    &\,\partial_{\alpha_{2}}^{2}\psi(x,y,\alpha) = \partial_{\alpha_{2}}^{3}\psi(x,y,\alpha) = \partial_{\alpha_{2}}^{4}\psi(x,y,\alpha) = 0 .  \nonumber
\end{align}
For any $p \in \mathbb{N}$ and $\alpha \in \overline{\Theta}_{3}$, we obtain 
\begin{align}
    &\int_{1}^{\infty}\left\lvert \log y \right\rvert^{p}f_{\alpha}(y)dy \leq \int_{1}^{\infty}\left\lvert y - 1 \right\rvert^{p}f_{\alpha}(y)dy \leq 2^{p-1} \int_{1}^{\infty}(1 + \lvert y \rvert^{p})f_{\alpha}(y)dy < \infty. \tag{3.2.1} \label{3.2.1}
\end{align}
Moreover, since $\left\lvert \log y \right\rvert^{p}f_{\alpha_{0}}(y) \to 0 \quad as \quad y \downarrow 0$, 
$\left\lvert \log y \right\rvert^{p}f_{\alpha_{0}}(y)$ is bounded on $[0,1]$ for any $\alpha \in \overline{\Theta}_{3}$.
Thus, for any $p \in \mathbb{N}$, it is easy to get that 
\begin{align}
    &\int_{0}^{1}\left\lvert \log y \right\rvert^{p}f_{\alpha}(y)dy < \infty. \tag{3.2.2} \label{3.2.2}
\end{align}
Combining \eqref{3.2.1} and \eqref{3.2.2}, we have that
\begin{align}
     &\int_{\mathbb{R}} \left\lvert \partial_{\alpha_{2}}^{k} \psi(x,y,\alpha) \right\rvert^{p} f_{\alpha_{0}}(y)dy = R(x,1). \nonumber
\end{align}
Thus, \Aref{a8} is satisfied.
\subsubsection{Inverse Gaussian Distribution}\label{ex3}
Let $\Theta_{3}$ be a subset of $\mathbb{R}_{+} \times (1,\infty) \times \mathbb{R}^{d_{3} - 2}$, which is
open and convex with a smooth boundary and let $f_{\alpha}(y)$ be as follows:
\begin{align}
    f_{\alpha}(y)&\, = \left\{ \begin{array}{ll}
            \sqrt{\frac{\alpha_{2}}{2\pi y^3}}\exp\left\{-\frac{\alpha_{2}(y - \alpha_{1})^{2}}{2 \alpha_{1}^{2}y}\right\}  &\,(y > 0),\\
            0&\, (y \leq 0).           
        \end{array}
        \right.\nonumber
\end{align}
for $\alpha \in \overline{\Theta}_{3}$. Then,
\begin{align}
    \psi(x,y,\alpha) =&\,\frac{1}{2c(x,\alpha)}\left\{\log \alpha_{2} - \log (2\pi) - 3\log y + 3\log c(x,\alpha) \right\} - \frac{\alpha_{2} \left\lvert \frac{y}{c(x,\alpha)} - \alpha_{1}\right \rvert^{2}}{2\alpha_{1}^{2}y}, \nonumber
\end{align}
for $(x,y,\alpha) \in \mathbb{R} \times \mathbb{R}_{+} \times \overline{\Theta}_{3}$.
Let us confirm \Aref{a8}. For any $p \in \mathbb{N}$ and $(\alpha_{1}, \alpha_{2}) \in \mathbb{R}_{+} \times (1,\infty)$, 
\begin{align}
    &\left(\frac{1}{y}\right)^{p}\exp\left\{- \frac{\alpha_{2}\alpha_{1}^{2}}{y}\right\} \rightarrow 0, \nonumber\\
    &\left\lvert \log y \right\rvert^{p}\exp\left\{- \frac{\alpha_{2}\alpha_{1}^{2}}{y}\right\} \rightarrow 0, \nonumber
\end{align}
as $y \downarrow 0$ and thus 
$\frac{1}{z^{p}}f_{\alpha_{0}}(z)$ and $\lvert \log z \rvert^{p} f_{\alpha_{0}}(z)$ are bounded on $[0,1]$, which implies 
\begin{align}
    &\int_{0}^{\infty}\frac{1}{z^{p}}f_{\alpha_{0}}(z)dz < \infty, \tag{3.2.3}\label{3.2.3} \\
    &\int_{0}^{\infty}\left\lvert \log z \right\rvert^{p} f_{\alpha_{0}}(z)dz < \infty. \tag{3.2.4}\label{3.2.4}
\end{align}
By applying inequalities \eqref{3.2.3} and \eqref{3.2.4} and discussing them in the same way as in example \ref{ex2},
it is confirmed that \Aref{a8} is satisfied.
\section{Numerical result}

In this section, we undertake a numerical verification of the asymptotic normality for
the initial Bayes estimator $\widehat{\mu}_{n,\varepsilon}^{(0)}$ and the adaptive Bayes estimators
 $\widehat{\sigma}_{n,\varepsilon}$, $\widehat{\mu}_{n,\varepsilon}$ and 
$\widehat{\alpha}_{\lambda}$
 in Theorem 3.1 of Section 3. Firstly, I will describe the setting of the simulation, 
 including the algorithms for initial and adaptive Bayes estimators, as well as the execution environment.
Additionally, I will address the crucial aspect of determining the threshold for jump estimation. Finally, I will conduct a discussion based on numerical result.

We consider Ornstein-Uhlenbeck (OU) process:
\begin{align}            
dX_{t}&= - \mu X_{t} dt + \varepsilon \sigma dW_{t} + \varepsilon dJ_{t}^{\lambda},\quad X_{0}^{\theta,\varepsilon} = 1.0 , \label{sde_simu} 
\end{align}
where $J_{t}^{\lambda} = \sum_{k = 1}^{N_{t}^{\lambda}}V_{\tau_{k}}$ represents
 a compound Poisson process with an intensity of $\lambda = 100$. Let $\{V_{\tau_{k}}\}_{k = 1,2,3,...}$ be a sequence of random variables
with the inverse Gaussian distribution as in Example \ref{ex3}:
\begin{align}
    f_{\alpha}(y)&\, = \left\{ \begin{array}{ll}
            \sqrt{\frac{\alpha_{2}}{2\pi y^3}}\exp\left\{-\frac{\alpha_{2}(y - \alpha_{1})^{2}}{2 \alpha_{1}^{2}y}\right\}  &\,(y > 0),\\
            0&\, (y \leq 0).           
        \end{array}
        \right.\nonumber
\end{align}
Let us denote the jump parameter $\alpha := (\alpha_{1},\alpha_{2})$. The parameter $\theta = (\sigma,\mu,\alpha_{1},\alpha_{2})$ remains unknown,
 with the true parameter set as $\theta_{0} = (1.0,2.0,1.2,0.5)$. Our parameter space is $\Theta = [0.01,50.0]^{4}$. Under combinations of $n = 1000, 2000, 5000$ and $\varepsilon = 0.1, 0.01$, we generated 1000 sample paths $\{X_{t_{k}^{n}}^{\theta,\varepsilon}\}_{k = 0}^{n}$ each following Equation \eqref{sde_simu}. Subsequently, we calculated the initial Bayes estimator $\widehat{\mu}_{n,\varepsilon}^{(0)}$ and the adaptive Bayes estimator $\widehat{\theta}_{n,\varepsilon,\lambda} = (\widehat{\sigma}_{n,\varepsilon},\widehat{\mu}_{n,\varepsilon},\widehat{\alpha}_{1,\lambda},\widehat{\alpha}_{2,\lambda})$. The means and standard deviations of the initial Bayes estimator $\widehat{\mu}_{n,\varepsilon}^{(0)}$ and the adaptive Bayes estimator $\widehat{\theta}_{n,\varepsilon,\lambda}$ are summarized in Table 1.

We computed the estimator using the MpCN method proposed by Kamatani \cite{6}. The algorithm for the MpCN method is given as follows:
\begin{itemize}
    \item Choose $\bm{z} \in \mathbb{R}^{d_{i}}$ for $i = 0,1,2,3$ with $d_{i_{0}} = d_{i_{2}}$.
    \item Generate $\bm{z}_{0} := \rho^{1/2}\bm{z} + (1 - \rho)^{1/2}\big(\lvert \lvert x \rvert \rvert^{2}/\lvert \lvert w_{1} \rvert \rvert^{2} \big)w_{2}$, where $w_{1},w_{2} \sim \mathcal{N}(0,I_{d_{i}}),\quad (i = 0,1,2,3)$.
    \item Accept $\bm{z}_{0}$ as $\bm{z}$ with probability $\min \{1,\big(p_{i}(\bm{z}_{0})\lvert \lvert \bm{z} \rvert \rvert^{2}\big)/\big(p_{i}(\bm{z})\lvert \lvert \bm{z}_{0} \rvert \rvert^{2} \big)\}$ for $i = 0,1,2,3$. Otherwise, discard $\bm{z}_{0}$.
\end{itemize}
Here, the target density $p_{i}$ is given by
\begin{align}
\rho_{0}(\mu) &:= \bigg(\exp\{\Psi_{n,\varepsilon}^{(0)}(\mu)\}\pi_{2}(\mu)\bigg)\cdot \bigg(\int_{\overline{\Theta}_{2}}\exp\{\Psi_{n,\varepsilon}^{(0)}(\mu)\}\pi_{2}(\mu)d\mu \bigg)^{-1},\nonumber\\
\rho_{1}(\sigma) &:= \bigg(\exp\{\Psi_{n,\varepsilon}^{(1)}(\sigma,\widehat{\mu}_{n,\varepsilon}^{(0)})\}\pi_{1}(\sigma)\bigg)\cdot \bigg(\int_{\overline{\Theta}_{1}}\exp\{\Psi_{n,\varepsilon}^{(1)}(\sigma,\widehat{\mu}_{n,\varepsilon})\}\pi_{1}(\sigma)d\sigma \bigg)^{-1},\nonumber\\
\rho_{2}(\mu) &:= \bigg(\exp\{\Psi_{n,\varepsilon}^{(1)}(\widehat{\sigma}_{n,\varepsilon},\mu\}\pi_{2}(\mu)\bigg)\cdot \bigg(\int_{\overline{\Theta}_{2}}\exp\{\Psi_{n,\varepsilon}^{(1)}(\widehat{\sigma}_{n,\varepsilon},\mu)\}\pi_{2}(\mu)d\mu \bigg)^{-1},\nonumber
\end{align}
and 
\begin{align}
\rho_{3}(\alpha) &:= \bigg(\exp\{\Psi_{\lambda}^{(2)}(\alpha)\}\pi_{3}(\alpha)\bigg)\cdot \bigg(\int_{\overline{\Theta}_{3}}\exp\{\Psi_{\lambda}^{(2)}(\alpha)\}\pi_{3}(\alpha)d\alpha \bigg)^{-1},\nonumber
\end{align}
where $\Psi_{n,\varepsilon}^{(0)},\Psi_{n,\varepsilon}^{(1)},\Psi_{n,\varepsilon}^{(2)}$ are defined in \eqref{contrast0},\eqref{contrast1} and \eqref{contrast2} in Section 3.1
and $\pi_{1},\pi_{2},\pi_{3}$ are uniform distribution on $[0.01,50]$. 
We set $\rho = 0.8$ and used $10^{4}$ Markov chains and $525$ burn-in iterations. 

Next, we are going to mention the environment used to perform numerical validation. Since the order of target density $\rho_{i}$ for $i = 0,1,2,3$ is $10^{-3000}$ , which is ignored in Python or R language and thus the estimators $\widehat{\mu}_{n,\varepsilon}^{0},\widehat{\sigma}_{n,\varepsilon},\widehat{\mu}_{n,\varepsilon},\widehat{\alpha}_{1,\lambda}$ and $\widehat{\alpha}_{2,\lambda}$
are not accurately approximated, we used a programming language called Julia.
\begin{table}[h]
 \caption{Sample means (with standard deviations in parentheses) of adaptive Bayes estimator $\widehat{\theta}_{n,\varepsilon,\lambda}$'s based on $1000$ sample paths 
 from the OU processes \eqref{sde_simu} with Inverse Gaussian distribution with $(\mu_{0},\sigma_{0},\alpha_{1,0},\alpha_{2,0}) = (1.0,2.0,1.2,0,5)$ and with $N_{D} = \lambda (= 100)$.}
 \label{table1}
 \centering
  \begin{tabular}{cccccc}
   \hline
  & $\varepsilon$\,\textbackslash\,$n$ & $1000$ & $2000$ & $5000$ & true\\
  \hline
$\widehat{\mu}_{n,\varepsilon}^{(0)}$&$0.1$& 1.00436&1.00141& 0.99933& 1.0   \\
     &       &(0.03582)  &(0.03754)  &(0.03337)&       \\
      &$0.01$& 1.00884   & 1.00358   &1.00047        &       \\
      &      &(0.01889)  &(0.01786)  &(0.01689)&       \\
   \hline 
$\widehat{\sigma}_{n,\varepsilon}$& $0.1$& 1.93419         & 1.99398     &   2.03236       &2.0     \\
&                  & (0.14514)      & (0.13461) & (0.13005)       &        \\
&$0.01$&    1.90790    & 1.98385        &    2.02131      &        \\
&                  &(0.13450)       &(0.11648)        &(0.10481)    &         \\
   \hline
  $\widehat{\mu}_{n,\varepsilon}$&$0.1$&1.00423 &1.00121     &0.99913& 1.0   \\
     &                   &(0.03586)  &(0.03767)    &(0.03327)&       \\
      &$0.01$&  1.00870  & 1.00349  &  1.00042        &       \\
      &                  &(0.01882) &(0.01790)  &(0.01689)&       \\
                 \hline
$\alpha_{1}$& $0.1$& 1.36909         &  1.37308        &         1.36279 &1.2     \\
            &                  &  (0.44260)          & (0.74012)        &(0.60665)   &        \\
            &0.01  & 3.33982         &      1.74292   &1.49780    &      \\
            &               &  (4.04837)             & (2.20131) &(1.15035)&         \\
   \hline
$\alpha_{2}$&0.1&0.37298        &0.44819    &0.48990     &0.5     \\
            &                  & (0.14194)        &(0.18721)   &(0.22850)            &        \\
&0.01&   0.15692      &  0.34289       &0.44299   &        \\
&                  & (0.12892)        & (0.17483)       &(0.21777)   &         \\
   \hline
  \end{tabular}
\end{table}

We are going to discuss the way to determine the optimal threshold rate. As in Shimizu \cite{13},
to estimate diffusion parameter $\sigma \in \overline{\Theta}_{1}$ and jump parameter $\alpha \in \overline{\Theta}_{3}$ accurately,
it is necessary to choose $\rho$ such that
\begin{align}
    \sum_{k = 1}^{n}\bm{1}_{D_{k}^{n}}& = \lambda. \label{opt}
\end{align}
Denote $\rho_{opt}$ as the threshold $\rho$ which satisfies the above equality. 
Since we consider OU process, we can apply the method proposed in Shimizu \cite{13} to obtain $\rho_{opt}$.
Thus, it is still possible 
to define a filter that satisfies \eqref{opt} assuming $\lambda$ is known. In the following,
 we will define filters $\{\widehat{C}_{k}^{N_{D}}\}_{k = 1}^{n}$ and $\{\widehat{D}_{k}^{N_{D}}\}_{k = 1}^{n}$ using a method different from the one proposed in Chapter 3:
\begin{align}
\widehat{C}_{k}^{N_{D}}& := \bigg\{\Delta_{k}^{n}X^{\varepsilon}\,\mathrm{is} \,\mathrm{not}\,\mathrm{contained}\,\mathrm{in}\,\mathrm{the}\,\lceil N_{D} \rceil\,\mathrm{largest}\,\mathrm{positive}\,\mathrm{numbers}\,\mathrm{of}\, 
\{\Delta_{j}^{n}X\}_{j = 1,...,n}  \bigg\}, \nonumber\\
\widehat{D}_{k}^{N_{D}}& := \bigg\{\Delta_{k}^{n}X^{\varepsilon}\,\mathrm{is} \,\mathrm{one}\,\mathrm{of}\,\mathrm{the}\,\lceil N_{D} \rceil\,\mathrm{largest}\,\mathrm{positive}\,\mathrm{numbers}\,\mathrm{of}\, 
\{\Delta_{j}^{n}X\}_{j = 1,...,n}  \bigg\}, \nonumber
\end{align}
where $N_{D}$ is any interger which the observer determines for himself. In this case, we set $N_{D} = \lambda = 100$.

The mean of $\widehat{\mu}_{n,\varepsilon}^{(0)},\widehat{\sigma}_{n,\varepsilon},\widehat{\mu}_{n,\varepsilon}$ and $\widehat{\alpha}_{2,\lambda}$ are closed to the true value $1.0,2.0,1.0$ and $0.5$ respectively. The standard deviation of $\widehat{\mu}_{n,\varepsilon},\widehat{\sigma}_{n,\varepsilon}$ go to $0$ 
as $\varepsilon \to 0, n \to \infty$ respectively. However, on the parameter $\alpha_{2}$, 
the standard deviation of $\widehat{\alpha}_{2,\lambda}$ grows larger as $n$ goes to $\infty$ 
and the mean of $\widehat{\alpha}_{1,\lambda}$ gets far 
from true value and standard deviation of $\widehat{\alpha}_{1,\lambda}$ grows larger as $n \to \infty, \varepsilon \to 0$.
The reason why it happens is expected as follows: 
The order of expectation of diffusion part and jump part is $\varepsilon/\sqrt{n}$ and $\varepsilon \lambda/n$ respectively. Thus, as $n \to \infty$ and $\varepsilon \to 0$, the jump part is too small to be distinguished from the diffusion part.  

\section{Lemma}
Let us start to add some symbols to state Lemma \ref{ideal_AN} - \ref{moment_ineq_jump}. Denote $J_{k,0}^{n},J_{k,1}^{n}$ and $J_{k,2}^{n}$ for $k = 1,2,...,n$ and $n \in \mathbb{N}$,
\begin{align}
    &J_{k,0}^{n} := \{\Delta_{k}^{n}N^{\lambda} = 0\},\quad J_{k,1}^{n} := \{\Delta_{k}^{n}N^{\lambda} = 1\},\quad  J_{k,2}^{n} := \{\Delta_{k}^{n}N^{\lambda} \geq 2\}, \nonumber
\end{align}
where $\Delta_{k}^{n}N^{\lambda} := N_{t_{k}^{n}}^{\lambda} - N_{t_{k-1}^{n}}^{\lambda}$. Based on the above symbols, we define the ``ideal" contrast functions $\tilde{\Psi}_{n,\varepsilon}^{(0)}(\mu)$, $\tilde{\Psi}(\sigma,\mu,\alpha)$,
 $\tilde{\Psi}_{n,\varepsilon}^{(1)}(\sigma,\mu)$, 
 and $\tilde{\Psi}_{\lambda}^{(2)}(\alpha)$ as follows:
\begin{align}
& \tilde{\Psi}_{n,\varepsilon}^{(0)}(\mu) := -\frac{1}{2}\sum_{k = 1}^{n}\left\{\left\lvert \Delta_{k}^{n}X^{\theta,\varepsilon} - \frac{1}{n}a(X_{t_{k-1}^{n}}^{\theta,\varepsilon},\mu)\right\rvert^{2}\middle/ \left(\frac{1}{n}\varepsilon^{2}\right)\right\}\mathbf{1}_{J_{k,0}^{n}},\nonumber\\
& \tilde{\Psi}_{n,\varepsilon,\lambda}(\sigma,\mu,\alpha):= \tilde{\Psi}_{n,\varepsilon}^{(1)}(\sigma,\mu)+ \tilde{\Psi}_{\lambda}^{(2)}(\alpha), \nonumber\\
& \tilde{\Psi}_{n,\varepsilon}^{(1)}(\sigma,\mu) := - \frac{1}{2}\sum_{k = 1}^{n} \left\{\frac{\left\lvert \Delta_{k}^{n}X^{\theta,\varepsilon} - \frac{1}{n}a(X_{t_{k-1}^{n}}^{\theta,\varepsilon},\mu) \right\rvert^{2}}{\frac{1}{n}\varepsilon^{2}b^{2}(X_{t_{k-1}^{n}}^{\theta,\varepsilon},\sigma)} + \log b^{2}(X_{t_{k-1}^{n}}^{\theta,\varepsilon},\sigma)\right\}\mathbf{1}_{J_{k,0}^{n}},\nonumber\\
& \tilde{\Psi}_{\lambda}^{(2)}(\alpha):= \sum_{k = 1}^{n} \psi(X_{t_{k-1}^{n}}^{\theta,\varepsilon},c(X_{t_{k-1}^{n}}^{\theta,\varepsilon},\alpha_{0})V_{\tau_{k}},\alpha) \mathbf{1}_{J_{k,1}^{n}}. \nonumber
\end{align}
Based on the above contrast functions $\tilde{\Psi}_{n,\varepsilon}^{(0)}(\mu)$, $\tilde{\Psi}_{n,\varepsilon}^{(1)}(\sigma,\mu)$, and $\tilde{\Psi}_{\lambda}^{(2)}(\alpha)$, we define the ``ideal" estimators $\tilde{\mu}_{n,\varepsilon}^{(0)}$, $\tilde{\sigma}_{n,\varepsilon}$, $\tilde{\mu}_{n,\varepsilon}$, and $\tilde{\alpha}_{\lambda}$ as follows:
\begin{align}
\tilde{\mu}_{n,\varepsilon}^{(0)}&= \left(\int_{\overline{\Theta}_{2}}\mu \exp\{\tilde{\Psi}_{n,\varepsilon}^{(0)}(\mu)\}\pi_{2}(\mu)d\mu\right)\cdot \left(\int_{\overline{\Theta}_{2}}\exp\{\tilde{\Psi}_{n,\varepsilon}^{(0)}(\mu)\}\pi_{2}(\mu)d\mu\right)^{-1},\nonumber\\
\tilde{\sigma}_{n,\varepsilon}&:= \left(\int_{\overline{\Theta}_{1}}\sigma \exp\{\tilde{\Psi}_{n,\varepsilon}^{(1)}(\sigma,\tilde{\mu}_{n,\varepsilon}^{(0)})\}\pi_{1}(\sigma)d\sigma\right)\cdot \left(\int_{\overline{\Theta}_{1}}\exp\{\tilde{\Psi}_{n,\varepsilon}^{(1)}(\sigma,\tilde{\mu}_{n,\varepsilon}^{(0)})\}\pi_{1}(\sigma)d\sigma\right)^{-1},\nonumber\\
\tilde{\mu}_{n,\varepsilon}&= \left(\int_{\overline{\Theta}_{2}}\mu \exp\{\tilde{\Psi}_{n,\varepsilon}(\sigma_{n,\varepsilon},\mu,\tilde{\alpha}_{\lambda})\}\pi_{2}(\mu)d\mu\right)\cdot \left(\int_{\overline{\Theta}_{2}}\exp\{\tilde{\Psi}_{n,\varepsilon}(\sigma_{n,\varepsilon},\mu,\tilde{\alpha}_{\lambda})\}\pi_{2}(\mu)d\mu\right)^{-1},\nonumber\\
\tilde{\alpha}_{\lambda}&= \left(\int_{\overline{\Theta}_{3}}\alpha \exp\{\tilde{\Psi}_{\lambda}^{(2)}(\alpha)\}\pi_{3}(\alpha)d\alpha\right)\cdot \left(\int_{\overline{\Theta}_{3}}\exp\{\tilde{\Psi}_{\lambda}^{(2)}(\alpha)\}\pi_{3}(\alpha)d\alpha\right)^{-1}. \nonumber
\end{align}
We also define the ``ideal" likelihood ratios $\tilde{Z}_{0}(u_{2};\mu_{0}),\tilde{Z}_{1}(u_{1};\sigma_{0}),\tilde{Z}_{2}(u_{2};\mu_{0})$ and $\tilde{Z}_{3}(u_{3};\alpha_{0})$ by 
\begin{align}
    & \tilde{Z}_{n,\varepsilon,\lambda}^{(0)}[u_{2}] := \exp \bigg\{\tilde{\Psi}_{n,\varepsilon}^{(0)}(\mu_{0} + \varepsilon u_{2}) - \tilde{\Psi}_{n,\varepsilon}^{(0)}(\mu_{0}) \bigg\},\quad u_{2} \in V_{\varepsilon}(\mu_{0}), \nonumber\\
    & \tilde{Z}_{n,\varepsilon,\lambda}^{(1)}[u_{1}] := \exp \bigg\{\tilde{\Psi}_{n,\varepsilon}^{(1)}(\sigma_{0} + u_{1}/\sqrt{n},\tilde{\mu}_{n,\varepsilon}) - \tilde{\Psi}_{n,\varepsilon}^{(1)}(\sigma_{0},\tilde{\mu}_{n,\varepsilon})\bigg\},\quad u_{1} \in V_{n}(\sigma_{0}), \nonumber\\
    & \tilde{Z}_{n,\varepsilon,\lambda}^{(2)}[u_{2}] := \exp \bigg\{\tilde{\Psi}_{n,\varepsilon}^{(1)}(\tilde{\sigma}_{n,\varepsilon},\mu_{0} + \varepsilon u_{2}) - \tilde{\Psi}_{n,\varepsilon}^{(1)}(\tilde{\sigma}_{n,\varepsilon},\mu_{0})\bigg\},\quad u_{2} \in V_{\varepsilon}(\mu_{0}), \nonumber\\
    & \tilde{Z}_{n,\varepsilon,\lambda}^{(3)}[u_{3}] := \exp \bigg\{\tilde{\Psi}_{\lambda}^{(2)}(\alpha_{0} + u_{3}/\sqrt{\lambda}) - \tilde{\Psi}_{\lambda}^{(2)}(\alpha_{0}) \bigg\},\quad u_{3} \in V_{\lambda}(\alpha_{0}), \nonumber
\end{align}
where for $n\in \mathbb{N},\varepsilon > 0$\, and \,$\lambda > 0$,\, the sets $V_{n}(\sigma_{0}),V_{\varepsilon}(\mu_{0}),V_{\lambda}(\alpha_{0})$ are given by 
\begin{align}
   V_{n}(\sigma_{0})&\,:= \bigg\{u_{1} \in \mathbb{R}^{d_{1}} \mid u_{1}/ \sqrt{n} + \sigma_{0} \in \overline{\Theta}_{1}\bigg\},\nonumber\\
   V_{\varepsilon}(\mu_{0})&\,:= \bigg\{u_{2} \in \mathbb{R}^{d_{2}} \mid \varepsilon u_{2} + \mu_{0} \in \overline{\Theta}_{2}\bigg\},\nonumber\\
   V_{\lambda}(\alpha_{0})&\,:= \bigg\{u_{3} \in \mathbb{R}^{d_{3}} \mid u_{3}/\sqrt{\lambda} + \alpha_{0} \in \overline{\Theta}_{3}\bigg\}.\nonumber
\end{align}
In addition, we also define the likelihood ratios $Z_{0}(u_{2};\mu_{0}), Z_{1}(u_{1};\sigma_{0}), Z_{2}(u_{2};\mu_{0})$ 
 and $Z_{3}(u_{3};\alpha_{0})$ by
\begin{align}
    & Z_{n,\varepsilon,\lambda}^{(0)}[u_{2}] := \exp \bigg\{\Psi_{n,\varepsilon}^{(0)}(\mu_{0} + \varepsilon u_{2}) - \Psi_{n,\varepsilon}^{(0)}(\mu_{0}) \bigg\}, \nonumber\\
    & Z_{n,\varepsilon,\lambda}^{(1)}[u_{1}] := \exp \bigg\{\Psi_{n,\varepsilon}^{(1)}(\sigma_{0} + u_{1}/\sqrt{n},\tilde{\mu}_{n,\varepsilon}) - \Psi_{n,\varepsilon}^{(1)}(\sigma_{0},\tilde{\mu}_{n,\varepsilon})\bigg\}, \nonumber\\
    & Z_{n,\varepsilon,\lambda}^{(2)}[u_{2}] := \exp \bigg\{\Psi_{n,\varepsilon}^{(1)}(\tilde{\sigma}_{n,\varepsilon},\mu_{0} + \varepsilon u_{2}) - \Psi_{n,\varepsilon}^{(1)}(\tilde{\sigma}_{n,\varepsilon},\mu_{0})  \bigg\}, \nonumber
\end{align}
and
\begin{align}
    & Z_{n,\varepsilon,\lambda}^{(3)}[u_{3}] := \exp \bigg\{\Psi_{\lambda}^{(2)}(\alpha_{0} + u_{3}/\sqrt{\lambda}) - \Psi_{\lambda}^{(2)}(\alpha_{0}) \bigg\}, \nonumber
\end{align}
\rem
The above contrast functions were introduced to prove the asymptotic normality of adaptive Bayes estimator
 $\widehat{\theta}_{n,\varepsilon,\lambda}$ defined in Chapter 3, and cannot be used to actually compute estimators.
This is because we are considering a situation in which estimation is performed using discrete observations $\{X_{t_{k}^{n}}^{\theta,\varepsilon}\}_{k = 0}^{n}$, 
and thus the value of the set $J_{k,0}^{n}$ is unknown.

In Section 6, we prove Theorem \ref{thm1} mainly based on Lemma \ref{error_conv_Z_and_tildeZ} and Lemma \ref{ideal_AN}. 
\begin{Lemma}\label{error_conv_Z_and_tildeZ}
    Suppose \Aref{a1}, \Aref{a3}, \Aref{a4}, \Aref{a8}, \Aref{a9} and either \Aref{a6} or \Aref{a7}. Take $\rho$ as follows:
\begin{itemize}
    \item[(i)] Under \Aref{a6}, $\rho \in (0,\frac{1}{2})$.
    \item[(ii)] Under \Aref{a7}, $\rho \in (0,\frac{1}{4q})$ where $q$ is the constant in \Aref{a7} (2).
\end{itemize}
Then, the following stochasitc convergence holds jointly:
\begin{align}
    \begin{pmatrix}
        \int_{V_{\varepsilon}(\mu_{0})}u_{2}\mathfrak{Z}_{n,\varepsilon,\lambda}^{(0)}[u_{0}]\pi_{2}(\varepsilon u_{2} + \mu_{0})du_{2}\\
        \int_{V_{\varepsilon}(\mu_{0})}\mathfrak{Z}_{n,\varepsilon,\lambda}^{(0)}[u_{0}]\pi_{2}(\varepsilon u_{2} + \mu_{0})du_{2}
    \end{pmatrix}
 &\inp 0,\label{error_conv_z0_and_tildez0}
\end{align}
and 
\begin{align}
    \begin{pmatrix}
        \int_{V_{n}(\sigma_{0})}u_{1}\mathfrak{Z}_{n,\varepsilon,\lambda}^{(1)}[u_{1}]\pi_{1}(u_{1}/\sqrt{n} + \sigma_{0})du_{1}\\
         \int_{V_{n}(\sigma_{0})}\mathfrak{Z}_{n,\varepsilon,\lambda}^{(1)}[u_{1}]\pi_{1}(u_{1}/\sqrt{n} + \sigma_{0})du_{1}
    \end{pmatrix}
    &\inp 0,\label{error_conv_z1_and_tildez1}\\
    \begin{pmatrix}
        \int_{V_{\varepsilon}(\mu_{0})}u_{2}\mathfrak{Z}_{n,\varepsilon,\lambda}^{(2)}[u_{2}]\pi_{2}(\varepsilon u_{2}+ \mu_{0})du_{2}
        \\
        \int_{V_{\varepsilon}(\mu_{0})}\mathfrak{Z}_{n,\varepsilon,\lambda}^{(2)}[u_{2}]\pi_{2}(\varepsilon u_{2}+ \mu_{0})du_{2}       
    \end{pmatrix}
    &\inp 0, \label{error_conv_z2_and_tildez2}\\
    \begin{pmatrix}
        \int_{V_{\lambda}(\alpha_{0})}u_{3}\mathfrak{Z}_{n,\varepsilon,\lambda}^{(3)}[u_{3}]\pi(u_{3}/\sqrt{\lambda} + \alpha_{0})du_{3}\\
        \int_{V_{\lambda}(\alpha_{0})}\mathfrak{Z}_{n,\varepsilon,\lambda}^{(3)}[u_{3}]\pi(u_{3}/\sqrt{\lambda} + \alpha_{0})du_{3}
    \end{pmatrix}
    &\inp 0, \label{error_conv_z3_and_tildez3} 
\end{align}
where $\mathfrak{Z}_{n,\varepsilon,\lambda}^{(i)}[u_{i}] := Z_{n,\varepsilon,\lambda}^{(i)}[u_{i}] - \tilde{Z}_{n,\varepsilon,\lambda}^{(i)}[u_{i}]$
for $i = 0,1,2,3$.
\end{Lemma}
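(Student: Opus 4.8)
The plan is to exploit that the observable contrasts $\Psi_{n,\varepsilon}^{(\cdot)}$ and the oracle contrasts $\tilde\Psi_{n,\varepsilon}^{(\cdot)}$ differ only through the disagreement between the data-driven filter $\{C_k^n,D_k^n\}$ and the true jump counter $\{J_{k,0}^n,J_{k,1}^n,J_{k,2}^n\}$, and, for the jump parameter, through the replacement of $\Delta_k^nX^{\theta,\varepsilon}/\varepsilon$ by its idealised jump part $c(X_{t_{k-1}^n}^{\theta,\varepsilon},\alpha_0)V_{\tau_k}$. Accordingly I would introduce the correct-filtering event
\begin{align}
\mathcal A_{n,\varepsilon,\lambda}:=\tilde I_x^{\gamma}\cap\bigcap_{k=1}^{n}\Bigl(\{C_k^n=J_{k,0}^n\}\cap\{D_k^n=J_{k,1}^n\}\Bigr),\nonumber
\end{align}
on which no bin carries two or more jumps and every jump is detected. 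On $\mathcal A_{n,\varepsilon,\lambda}$ one has $\bm{1}_{C_k^n}=\bm{1}_{J_{k,0}^n}$ for every $k$, so $\Psi_{n,\varepsilon}^{(0)}\equiv\tilde\Psi_{n,\varepsilon}^{(0)}$ and $\Psi_{n,\varepsilon}^{(1)}\equiv\tilde\Psi_{n,\varepsilon}^{(1)}$ as functions, whence $\mathfrak Z_{n,\varepsilon,\lambda}^{(i)}[u_i]\equiv0$ for $i=0,1,2$ identically in $u_i$. Each integral in (\ref{error_conv_z0_and_tildez0}), (\ref{error_conv_z1_and_tildez1}), (\ref{error_conv_z2_and_tildez2}) therefore vanishes on $\mathcal A_{n,\varepsilon,\lambda}$, so for every $\delta>0$ the event that it exceeds $\delta$ lies in $\mathcal A_{n,\varepsilon,\lambda}^{c}$, and (\ref{error_conv_z0_and_tildez0})--(\ref{error_conv_z2_and_tildez2}) reduce to proving $\mathbb P(\mathcal A_{n,\varepsilon,\lambda}^{c})\to0$.

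To that end I would control $\mathbb P(\mathcal A_{n,\varepsilon,\lambda}^{c})$ through four pieces. The confinement $\mathbb P\bigl((\tilde I_x^{\gamma})^{c}\bigr)\to0$ holds by small-noise concentration of $X^{\theta,\varepsilon}$ around the solution of (\ref{ode}) under \Aref{a1}. For two or more jumps in one bin, $\Delta_k^nN^\lambda\sim\mathrm{Poisson}(\lambda/n)$ gives $\mathbb P\bigl(\bigcup_kJ_{k,2}^n\bigr)\le n\,\mathbb P(\Delta_1^nN^\lambda\ge2)=O(\lambda^2/n)=o(1)$ by \Aref{a9}. For a false detection on a jump-free bin, $J_{k,0}^n\cap D_k^n$, one has $\Delta_k^nX^{\theta,\varepsilon}=\int_{t_{k-1}^n}^{t_k^n}a\,ds+\varepsilon\int_{t_{k-1}^n}^{t_k^n}b\,dW$, the drift being $O(1/n)$ on $\tilde I_x^{\gamma}$ (where $a,b$ are bounded by \Aref{a1}) and negligible against the threshold $\varepsilon v_1/n^{\rho}$ since $\rho<1/2$ and $1/(n\varepsilon^2)=O(1)$; a Gaussian tail estimate for the stochastic integral then bounds each bin by $\exp(-c\,n^{1-2\rho})$, summable to $o(1)$. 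For a missed jump on a one-jump bin, $J_{k,1}^n\cap C_k^n$, the bound $\inf|c|>0$ from \Aref{a3} forces $|V_{\tau_k}|\lesssim v_2/(cn^{\rho})$, so the pooled probability is $O\bigl(\lambda\int_{|z|\le 4v_2/(cn^{\rho})}f_{\alpha_0}(z)\,dz\bigr)$, which tends to $0$ in the limiting regime of Theorem \ref{thm1} (and, under \Aref{a7}, is governed by the near-origin order of $\partial_y\psi$ through $\rho\in(0,1/(4q))$). Collecting the four pieces yields $\mathbb P(\mathcal A_{n,\varepsilon,\lambda}^{c})\to0$ and settles (\ref{error_conv_z0_and_tildez0})--(\ref{error_conv_z2_and_tildez2}).

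For the jump component (\ref{error_conv_z3_and_tildez3}) the difference does not vanish on $\mathcal A_{n,\varepsilon,\lambda}$, since on a detected jump bin the summand $\psi(X_{t_{k-1}^n}^{\theta,\varepsilon},\Delta_k^nX^{\theta,\varepsilon}/\varepsilon,\alpha)$ of $\Psi_\lambda^{(2)}$ differs from $\psi(X_{t_{k-1}^n}^{\theta,\varepsilon},c(X_{t_{k-1}^n}^{\theta,\varepsilon},\alpha_0)V_{\tau_k},\alpha)$ of $\tilde\Psi_\lambda^{(2)}$ by the argument perturbation
\begin{align}
R_k=\frac1\varepsilon\int_{t_{k-1}^n}^{t_k^n}a\,ds+\int_{t_{k-1}^n}^{t_k^n}b\,dW+\bigl(c(X_{\tau_k-}^{\theta,\varepsilon},\alpha_0)-c(X_{t_{k-1}^n}^{\theta,\varepsilon},\alpha_0)\bigr)V_{\tau_k}=O_{\mathbb P}(n^{-1/2}),\nonumber
\end{align}
the order following from \Aref{a1}, \Aref{a3}, \Aref{a4} and $1/(n\varepsilon^2)=O(1)$. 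Setting $G_\lambda(\alpha):=\Psi_\lambda^{(2)}(\alpha)-\tilde\Psi_\lambda^{(2)}(\alpha)$, on $\mathcal A_{n,\varepsilon,\lambda}$ this is a sum over the order-$\lambda$ detected jump bins of argument-perturbation errors; a mean-value expansion in $y$ with the $y$-bound of \Aref{a6}/\Aref{a7}, together with the uniform $\alpha$-moment bounds $\sup_\alpha\int|\partial_{\bm\alpha}^{\bm k}\psi|^pf_{\alpha_0}=R(x,1)$ of \Aref{a8} to pass to $\sup_\alpha$, gives $\mathbb E\bigl[\sup_{\alpha\in\overline\Theta_3}|G_\lambda(\alpha)|\bigr]\lesssim\lambda/\sqrt n$, so $\eta:=\sup_{\alpha}|G_\lambda(\alpha)|=O_{\mathbb P}(\lambda/\sqrt n)=o_{\mathbb P}(1)$ because $\lambda^2/n\le\lambda^2/n^{1-\delta}\to0$. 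Since $Z_{n,\varepsilon,\lambda}^{(3)}[u_3]=\tilde Z_{n,\varepsilon,\lambda}^{(3)}[u_3]\exp\{G_\lambda(\alpha_0+u_3/\sqrt\lambda)-G_\lambda(\alpha_0)\}$ on $\mathcal A_{n,\varepsilon,\lambda}$, I would bound $|\mathfrak Z_{n,\varepsilon,\lambda}^{(3)}[u_3]|\le\tilde Z_{n,\varepsilon,\lambda}^{(3)}[u_3]\,\eta\,e^{\eta}$ uniformly in $u_3$, so that
\begin{align}
\int_{V_\lambda(\alpha_0)}(1+|u_3|)\,\bigl|\mathfrak Z_{n,\varepsilon,\lambda}^{(3)}[u_3]\bigr|\,\pi_3\,du_3\le\eta\,e^{\eta}\int_{V_\lambda(\alpha_0)}(1+|u_3|)\,\tilde Z_{n,\varepsilon,\lambda}^{(3)}[u_3]\,\pi_3\,du_3,\nonumber
\end{align}
and conclude (\ref{error_conv_z3_and_tildez3}) by combining $\eta=o_{\mathbb P}(1)$ with $\int(1+|u_3|)\tilde Z_{n,\varepsilon,\lambda}^{(3)}\pi_3\,du_3=O_{\mathbb P}(1)$, the latter from a polynomial-type large-deviation bound on $\tilde Z_{n,\varepsilon,\lambda}^{(3)}$ built from the $\psi$-moment estimates of \Aref{a8} as in Yoshida \cite{17}; on $\mathcal A_{n,\varepsilon,\lambda}^{c}$ the integral is discarded at cost $\mathbb P(\mathcal A_{n,\varepsilon,\lambda}^{c})\to0$.

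The step I expect to be the genuine obstacle is this last transfer for $i=3$: turning the uniform-in-$\alpha$ smallness $\eta=O_{\mathbb P}(\lambda/\sqrt n)$ into convergence of the Bayesian integrals over the growing region $V_\lambda(\alpha_0)$. It demands a polynomial-type large-deviation envelope for $\tilde Z_{n,\varepsilon,\lambda}^{(3)}$ that is integrable against $(1+|u_3|)\pi_3$ and bounded in probability uniformly in $(n,\varepsilon,\lambda)$, so that multiplication by $\eta$ indeed yields $o_{\mathbb P}(1)$; securing such an envelope from the $\psi$-moment estimates of \Aref{a8} while respecting the exact rate budget of \Aref{a9} ($\lambda^2/n^{1-\delta}=o(1)$ and $\varepsilon\lambda^{1+\delta}=o(1)$) is where the real work lies. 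A secondary difficulty is the missed-jump estimate under \Aref{a7}, where the admissible range $\rho\in(0,1/(4q))$ must be matched precisely to the near-origin growth $|\partial_y\psi|=O(|y|^{-q})$ so as to force $\lambda\int_{|z|\le 4v_2/(cn^{\rho})}f_{\alpha_0}(z)\,dz\to0$.
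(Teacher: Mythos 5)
Your proposal is correct and takes essentially the same route as the paper: for $i=0,1,2$ the paper likewise reduces \eqref{error_conv_z0_and_tildez0}--\eqref{error_conv_z2_and_tildez2} to the event that the threshold filter classifies every increment correctly (on which $\Psi_{n,\varepsilon}^{(i)}\equiv\tilde{\Psi}_{n,\varepsilon}^{(i)}$, hence $\mathfrak{Z}_{n,\varepsilon,\lambda}^{(i)}\equiv 0$), with the misclassification probability vanishing by Lemma \ref{lemma_in_kobayashi}; and for $i=3$ it likewise combines the uniform bound $\sup_{\alpha\in\overline{\Theta}_{3}}\lvert\Psi_{\lambda}^{(2)}(\alpha)-\tilde{\Psi}_{\lambda}^{(2)}(\alpha)\rvert=o_{p}(1)$ (its Lemma \ref{error_conv_contrast3}, your $G_{\lambda}$ estimate) with the $O_{p}(1)$ control of $\int(1+\lvert u_{3}\rvert)\tilde{Z}_{n,\varepsilon,\lambda}^{(3)}[u_{3}]\pi_{3}\,du_{3}$ coming from Lemma \ref{moment_est_contrast3} and Yoshida's theory. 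The only differences are presentational: you prove the filter-error and sup-difference estimates directly where the paper cites Lemmas \ref{lemma_in_kobayashi} and \ref{error_conv_contrast3}, and you use $\lvert e^{x}-1\rvert\le\lvert x\rvert e^{\lvert x\rvert}$ where the paper splits the integral on the event $\{\lvert\chi_{n,\varepsilon,\lambda}[u_{3}]\rvert\le 1\}$.
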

\rem
Lemma \ref{error_conv_Z_and_tildeZ} shows that the asymptotic normality 
of the ``ideal" Bayes estimator $\tilde{\mu}_{n,\varepsilon}^{(0)}$
 and $\tilde{\theta}_{n,\varepsilon,\lambda}$, which is shown in Lemma \ref{ideal_AN}, is carried over to the initial Bayes estimator $\widehat{\mu}_{n,\varepsilon}$ and 
adaptive Bayes estimator $\widehat{\theta}_{n,\varepsilon,\lambda}$, respectively. 

 We prove Lemma \ref{error_conv_Z_and_tildeZ} based on Lemma \ref{lemma_in_kobayashi} and \ref{error_conv_contrast3}. Lemma \ref{error_conv_contrast3} follows from Lemma \ref{lemma_in_kobayashi}. 
 \begin{Lemma}(Lemma 4.7 in Kobayashi and Shimizu \cite{9})\label{lemma_in_kobayashi}
     Suppose \Aref{a1}, \Aref{a3}, \Aref{a4}, and \Aref{a9}. We further assume that $0 < \varepsilon \leq 1$, $\lambda \leq 1$, and $\varepsilon \lambda \leq 1$. Additionally, let $2 \leq p$ and $\rho \in (0, \frac{1}{2})$.
     We have the following conditional probabilities:
         \begin{align}
             \mathbb{P}(C_{k,0}^{n}\mid \mathcal{F}_{t_{k-1}^{n}}) &\,= 1 - R\bigg(X_{t_{k-1}^{n}}^{\theta,\varepsilon},\frac{1}{\varepsilon^{p}n^{p(1 - \rho)}} + \frac{1}{n^{p(1 - \rho)}}\bigg), \nonumber\\
             \mathbb{P}(D_{k,0}^{n}\mid \mathcal{F}_{t_{k-1}^{n}}) &\,= R\bigg(X_{t_{k-1}^{n}}^{\theta,\varepsilon},\frac{1}{\varepsilon^{p}n^{p(1 - \rho)}} + \frac{1}{n^{p(1 - \rho)}}\bigg), \nonumber\\
             \mathbb{P}(C_{k,1}^{n}\mid \mathcal{F}_{t_{k-1}^{n}}) &\,= \frac{\lambda}{n}\bigg\{R\bigg(X_{t_{k-1}^{n}}^{\theta,\varepsilon},\frac{1}{\varepsilon^{p}n^{p(1 - \rho)}} + \frac{1}{n^{p(1 - \rho)}}+ \frac{\lambda}{n}\bigg) + \int_{\lvert z \rvert \leq \frac{4}{c_{1}n^{\rho}}}f_{\alpha_{0}}(z)dz\bigg\}, \nonumber\\
             \mathbb{P}(D_{k,1}^{n}\mid \mathcal{F}_{t_{k-1}^{n}}) &\,= \frac{\lambda}{n}\bigg\{R\bigg(X_{t_{k-1}^{n}}^{\theta,\varepsilon},\frac{1}{\varepsilon^{p}n^{p(1 - \rho)}} + \frac{1}{n^{p(1 - \rho)}}+ \frac{\lambda}{n}\bigg) + 1 \bigg\}, \nonumber\\
             \mathbb{P}(C_{k,2}^{n}\mid \mathcal{F}_{t_{k-1}^{n}}) &\,\leq \frac{\lambda^{2}}{n^{2}},  \nonumber\\
             \mathbb{P}(D_{k,2}^{n}\mid \mathcal{F}_{t_{k-1}^{n}}) &\,\leq \frac{\lambda^{2}}{n^{2}},  \nonumber
         \end{align} 
         where $\displaystyle c_{1} := \sup_{t \in [0,1]}\lvert c(x_{t},\alpha_{0}) \rvert >0 ,\quad c_{2} :=  \sup_{t \in [0,1]}\lvert c(x_{t},\alpha_{0}) \rvert > 0$\,.
     \end{Lemma}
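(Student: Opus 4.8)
Throughout I interpret the events as $C_{k,j}^{n}:=C_{k}^{n}\cap J_{k,j}^{n}$ and $D_{k,j}^{n}:=D_{k}^{n}\cap J_{k,j}^{n}$ for $j=0,1,2$, so that the six quantities are the conditional probabilities that the threshold filter labels the $k$-th increment ``no jump'' ($C$) or ``jump'' ($D$) while the interval actually carries $0$, $1$, or $\geq 2$ Poissonian jumps. The starting observation is that $\Delta_{k}^{n}N^{\lambda}$ is independent of $\mathcal{F}_{t_{k-1}^{n}}$ and is $\mathrm{Poisson}(\lambda/n)$ distributed, whence $\mathbb{P}(J_{k,0}^{n}\mid\mathcal{F}_{t_{k-1}^{n}})=e^{-\lambda/n}$, $\mathbb{P}(J_{k,1}^{n}\mid\mathcal{F}_{t_{k-1}^{n}})=(\lambda/n)e^{-\lambda/n}$, and $\mathbb{P}(J_{k,2}^{n}\mid\mathcal{F}_{t_{k-1}^{n}})=1-e^{-\lambda/n}(1+\lambda/n)\leq\tfrac12(\lambda/n)^{2}$. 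Since $C_{k,2}^{n},D_{k,2}^{n}\subset J_{k,2}^{n}$, this last inequality immediately yields the two bounds for $j=2$.

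For $j=0$ I would condition on $J_{k,0}^{n}$, on which $X^{\theta,\varepsilon}$ solves the jump-free SDE driven by $W$ alone, so $\Delta_{k}^{n}X^{\theta,\varepsilon}=\tfrac1n a(X_{t_{k-1}^{n}}^{\theta,\varepsilon},\mu_{0})+\varepsilon\int_{t_{k-1}^{n}}^{t_{k}^{n}}b\,dW+(\text{remainder})$. Using \Aref{a1}--\Aref{a4}, the Burkholder--Davis--Gundy inequality and a Gronwall-type moment estimate for the increment, one gets $\mathbb{E}_{\theta_{0}}[\,|\Delta_{k}^{n}X^{\theta,\varepsilon}|^{p}\mathbf{1}_{J_{k,0}^{n}}\mid\mathcal{F}_{t_{k-1}^{n}}]\lesssim(1+|X_{t_{k-1}^{n}}^{\theta,\varepsilon}|)^{Cp}\,(n^{-p}+\varepsilon^{p}n^{-p/2})$. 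A Markov inequality against the threshold level $\varepsilon v_{nk}/n^{\rho}\gtrsim\varepsilon/n^{\rho}$ turns the drift contribution into the profile $\varepsilon^{-p}n^{-p(1-\rho)}+n^{-p(1-\rho)}$, while the Brownian (martingale) part is in fact superpolynomially small by Gaussian concentration precisely because $\rho<1/2$, and is therefore absorbed into the same $R$-function. This gives $\mathbb{P}(D_{k,0}^{n}\mid\mathcal{F}_{t_{k-1}^{n}})=R(\cdots)$; writing $\mathbb{P}(C_{k,0}^{n}\mid\mathcal{F}_{t_{k-1}^{n}})=\mathbb{P}(J_{k,0}^{n}\mid\mathcal{F}_{t_{k-1}^{n}})-\mathbb{P}(D_{k,0}^{n}\mid\mathcal{F}_{t_{k-1}^{n}})$ and folding $1-e^{-\lambda/n}$ into the remainder produces the first line.

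The case $j=1$ is the core of the argument. On $J_{k,1}^{n}$ there is a single jump at some $\tau\in(t_{k-1}^{n},t_{k}^{n}]$ with mark $V_{\tau}\sim f_{\alpha_{0}}$, independent of $W$, and $\Delta_{k}^{n}X^{\theta,\varepsilon}=(\text{diffusion increment})+\varepsilon c(X_{\tau-}^{\theta,\varepsilon},\alpha_{0})V_{\tau}$. I would first replace $X_{\tau-}^{\theta,\varepsilon}$ by $X_{t_{k-1}^{n}}^{\theta,\varepsilon}$ at the cost of an $R$-error, using the Lipschitz bound on $c$ from \Aref{a1} and \Aref{a3} together with a moment estimate for $\sup_{t}|X_{t}^{\theta,\varepsilon}-X_{t_{k-1}^{n}}^{\theta,\varepsilon}|$ over the interval, and localise onto $\tilde{I}_{x}^{\gamma}$ so that $|c|\leq c_{1}$. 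On $C_{k,1}^{n}$ the inequality $|\Delta_{k}^{n}X^{\theta,\varepsilon}|<\varepsilon v_{nk}/n^{\rho}$ forces $|V_{\tau}|\lesssim 4/(c_{1}n^{\rho})$ up to diffusion fluctuations absorbed in $R$; integrating the conditional law of $V_{\tau}$ contributes $\int_{|z|\leq 4/(c_{1}n^{\rho})}f_{\alpha_{0}}(z)\,dz$, and multiplying by $\mathbb{P}(J_{k,1}^{n}\mid\mathcal{F}_{t_{k-1}^{n}})=(\lambda/n)e^{-\lambda/n}$ gives the stated $\mathbb{P}(C_{k,1}^{n}\mid\mathcal{F}_{t_{k-1}^{n}})$. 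Taking the complement inside $J_{k,1}^{n}$ then yields the $\frac{\lambda}{n}\{R(\cdots)+1\}$ form for $D_{k,1}^{n}$.

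The main obstacle is exactly this single-jump step: one must show that, after removing the diffusion fluctuation and performing the $X_{\tau-}\to X_{t_{k-1}^{n}}$ substitution, the threshold test on the full increment reduces cleanly to a threshold test on the mark $V_{\tau}$, with every discrepancy controlled by an $R$-function of $X_{t_{k-1}^{n}}^{\theta,\varepsilon}$. This needs moment estimates for the diffusion part that beat the scale $\varepsilon/n^{\rho}$ for all $p$, so that the cross terms between the Brownian and jump parts are negligible, and uniform control of $c$ over the tube $I_{x}^{\gamma}$ so that $c_{1}=\sup_{t}|c(x_{t},\alpha_{0})|$ governs the cutoff on $|V_{\tau}|$; the conditions $\rho<1/2$ and \Aref{a9} are precisely what make the resulting $R$-profiles $\varepsilon^{-p}n^{-p(1-\rho)}+n^{-p(1-\rho)}+\lambda/n$ admissible. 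As the statement is quoted from Lemma~4.7 of \cite{9}, one may alternatively invoke that reference directly.
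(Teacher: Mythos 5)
The paper never actually proves this lemma: as its parenthetical title indicates, it is imported verbatim from Lemma 4.7 of Kobayashi and Shimizu \cite{9}, and Section 6 contains no argument for it. The only point of contact is therefore your closing sentence---that one may invoke the reference directly---which is exactly what the paper does, so in that fallback form your proposal coincides with the paper's (non-)proof.

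Your from-scratch reconstruction goes beyond what the paper offers, and in outline it is the right argument, presumably the one in \cite{9}: the Poisson splitting $\mathbb{P}(J_{k,0}^{n}\mid\mathcal{F}_{t_{k-1}^{n}})=e^{-\lambda/n}$, $\mathbb{P}(J_{k,1}^{n}\mid\mathcal{F}_{t_{k-1}^{n}})=(\lambda/n)e^{-\lambda/n}$, $\mathbb{P}(J_{k,2}^{n}\mid\mathcal{F}_{t_{k-1}^{n}})\leq\tfrac{1}{2}(\lambda/n)^{2}$ disposes of the $j=2$ lines at once; conditional Markov bounds against the threshold $\varepsilon v_{nk}/n^{\rho}$ handle $j=0$; and the single-jump analysis with the mark distribution $f_{\alpha_{0}}$ produces the integral term for $j=1$. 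Two remarks. First, your observation that the Brownian contribution on $J_{k,0}^{n}$ must be treated by Gaussian concentration (or, equivalently, by Markov at a strictly larger exponent) is genuinely necessary: Markov at exponent $p$ alone gives only $n^{-p(1/2-\rho)}$, which is weaker than the stated $n^{-p(1-\rho)}$, so this step cannot be skipped. Second, a caveat on your $j=1$ step: to conclude from $\lvert\Delta_{k}^{n}X^{\theta,\varepsilon}\rvert<\varepsilon v_{nk}/n^{\rho}$ that $\lvert V_{\tau}\rvert\lesssim n^{-\rho}$ you need a \emph{lower} bound on $\lvert c(X_{\tau-}^{\theta,\varepsilon},\alpha_{0})\rvert$ (the infimum over the tube $I_{x}^{\gamma}$, positive by \Aref{a3}), not the supremum $c_{1}$ that you cite as governing the cutoff. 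The statement itself betrays this confusion: it defines $c_{1}$ and $c_{2}$ identically as suprema (an evident transcription error from \cite{9}), and its radius $4/(c_{1}n^{\rho})$ is moreover inconsistent with the radius $4v_{2}/(c_{1}n^{\rho})$ used in Lemma~\ref{error_conv_contrast3} and Theorem~\ref{thm1}. This is a defect of the quoted statement rather than a gap in your strategy, but a careful write-up of your $j=1$ step would have to fix the constant rather than reproduce it.
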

     
     \begin{Lemma}\label{error_conv_contrast3}
        Suppose \Aref{a1}, \Aref{a3}, \Aref{a4}, \Aref{a6}, \Aref{a8}, \Aref{a9}. Take $\rho \in (0, \frac{1}{2})$. Then,
        \begin{align}
         &\,\sup_{\alpha \in \overline{\Theta}_{3}} \bigg\lvert \Psi_{\lambda}^{(2)}(\alpha) - \tilde{\Psi}_{\lambda}^{(2)}(\alpha) \bigg\rvert = O_{p}\bigg(\frac{1}{\varepsilon n^{1 - 1/p}} + \frac{1}{n^{1/2 - 1/p}}\bigg), \nonumber
     \end{align}
     as $n \to \infty,\quad \varepsilon \downarrow 0,\quad \lambda \to \infty$ and $\lambda \int_{\lvert z \rvert \leq 4v_{2}/c_{1}n^{\rho}}f_{\alpha_{0}}(z)dz \to 0$
     where $p \in \mathbb{N}$ satisfies that $1/p < \delta$ with $\delta > 0$ in \Aref{a9}. Moreover, suppose \Aref{a7} in place of \Aref{a6} and take $\rho \in (0,\frac{1}{4q})$ where $q$ 
     is a constant in \Aref{a7}. 
     If, for any $p \geq \frac{2}{1 - 2qp}$,
             \begin{align}
                 &\varepsilon n^{1 - q\rho - 1/p} \to 0, \nonumber
             \end{align}
             then,
             \begin{align}
                 &\sup_{\alpha \in \overline{\Theta}_{3}} \bigg\lvert \Psi_{\lambda}^{(2)}(\alpha) - \tilde{\Psi}_{\lambda}^{(2)}(\alpha) \bigg\rvert = O_{p}\bigg(\dfrac{1}{n^{1 -1/p_{1} - q\rho}} + \dfrac{1}{n^{1/2 - 1/p_{1} - q\rho}} \bigg), \nonumber
             \end{align}
             as $n \to \infty, \varepsilon \to 0, \lambda \to \infty$ and $\lambda \int_{\lvert z \rvert \leq 4v_{2}/c_{1}n^{\rho}}f_{\alpha_{0}}(z)dz \to 0$.
     \end{Lemma}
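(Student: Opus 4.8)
The plan is to expand $\Psi_{\lambda}^{(2)}(\alpha)-\tilde{\Psi}_{\lambda}^{(2)}(\alpha)$ and reorganize the two sums according to the partition of each interval by its number of jumps, $J_{k,0}^{n}\cup J_{k,1}^{n}\cup J_{k,2}^{n}$. Writing $y_{1}:=\Delta_{k}^{n}X^{\theta,\varepsilon}/\varepsilon$ and $y_{2}:=c(X_{t_{k-1}^{n}}^{\theta,\varepsilon},\alpha_{0})V_{\tau_{k}}$, and using $\mathbf{1}_{D_{k}^{n}}=\mathbf{1}_{D_{k,0}^{n}}+\mathbf{1}_{D_{k,1}^{n}}+\mathbf{1}_{D_{k,2}^{n}}$ together with $\mathbf{1}_{J_{k,1}^{n}}=\mathbf{1}_{C_{k,1}^{n}}+\mathbf{1}_{D_{k,1}^{n}}$, I would write the difference as four groups:
\begin{align}
\Psi_{\lambda}^{(2)}(\alpha)-\tilde{\Psi}_{\lambda}^{(2)}(\alpha) &= \underbrace{\sum_{k=1}^{n}\big[\psi(X_{t_{k-1}^{n}}^{\theta,\varepsilon},y_{1},\alpha)-\psi(X_{t_{k-1}^{n}}^{\theta,\varepsilon},y_{2},\alpha)\big]\mathbf{1}_{D_{k,1}^{n}}}_{=:A(\alpha)} + \underbrace{\sum_{k=1}^{n}\psi(X_{t_{k-1}^{n}}^{\theta,\varepsilon},y_{1},\alpha)\mathbf{1}_{D_{k,0}^{n}}}_{=:B(\alpha)} \nonumber\\
&\quad + \underbrace{\sum_{k=1}^{n}\psi(X_{t_{k-1}^{n}}^{\theta,\varepsilon},y_{1},\alpha)\mathbf{1}_{D_{k,2}^{n}}}_{=:C(\alpha)} - \underbrace{\sum_{k=1}^{n}\psi(X_{t_{k-1}^{n}}^{\theta,\varepsilon},y_{2},\alpha)\mathbf{1}_{C_{k,1}^{n}}}_{=:D(\alpha)}. \nonumber
\end{align}
Here $A$ is the ``matched'' term where a single true jump is correctly detected, $B$ is the false-positive term, $C$ collects multiple jumps, and $D$ is the false-negative term. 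I would bound a $p$-th moment of $\sup_{\alpha}|\cdot|$ of each group, working on the localization event $\tilde{I}_{x}^{\gamma}$ (off which the contribution is negligible by the moment bounds for $X^{\theta,\varepsilon}$ obtained from the $C_{\uparrow}$-growth in \Aref{a2}).

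For the main term $A$, on $J_{k,1}^{n}$ the increment decomposes as $\Delta_{k}^{n}X^{\theta,\varepsilon}=\int_{t_{k-1}^{n}}^{t_{k}^{n}}a(X_{s}^{\theta,\varepsilon},\mu_{0})ds+\varepsilon\int_{t_{k-1}^{n}}^{t_{k}^{n}}b(X_{s}^{\theta,\varepsilon},\sigma_{0})dW_{s}+\varepsilon c(X_{\tau_{k}-}^{\theta,\varepsilon},\alpha_{0})V_{\tau_{k}}$, so that
\begin{align}
y_{1}-y_{2} = \frac{1}{\varepsilon}\int_{t_{k-1}^{n}}^{t_{k}^{n}}a(X_{s}^{\theta,\varepsilon},\mu_{0})ds + \int_{t_{k-1}^{n}}^{t_{k}^{n}}b(X_{s}^{\theta,\varepsilon},\sigma_{0})dW_{s} + \big[c(X_{\tau_{k}-}^{\theta,\varepsilon},\alpha_{0})-c(X_{t_{k-1}^{n}}^{\theta,\varepsilon},\alpha_{0})\big]V_{\tau_{k}}. \nonumber
\end{align}
The three contributions are of conditional order $1/(\varepsilon n)$, $1/\sqrt{n}$ and $(1/n+\varepsilon/\sqrt{n})V_{\tau_{k}}$ respectively, the last via the Lipschitz bound for $c(\cdot,\alpha_{0})^{-1}$ in Remark \ref{r_a_2} applied to the continuous increment $X_{\tau_{k}-}^{\theta,\varepsilon}-X_{t_{k-1}^{n}}^{\theta,\varepsilon}$. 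Applying the mean value theorem in $y$ and the uniform growth bound $\sup_{\alpha}|\partial_{y}\psi|\lesssim 1+|y|^{q}$ of \Aref{a6} gives a pointwise bound that is already uniform in $\alpha$; combined with the finite-moment property \Aref{a4} of $f_{\alpha_{0}}$ and the conditional probability $\mathbb{P}(D_{k,1}^{n}\mid\mathcal{F}_{t_{k-1}^{n}})=(\lambda/n)\{R(\cdots)+1\}$ from Lemma \ref{lemma_in_kobayashi}, and balancing $p$-th moments against these probabilities by Hölder's inequality (the free exponent $p$ being exactly the integrability order appearing in the statement), summation over $k$ under \Aref{a9} yields the rate $O_{p}(1/(\varepsilon n^{1-1/p})+1/n^{1/2-1/p})$ for $A$.

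For the error terms I would invoke Lemma \ref{lemma_in_kobayashi} directly. Since $\psi$ and $\partial_{y}\psi$ have polynomial growth (\Aref{a6}, \Aref{a8}), $\sup_{\alpha}|\psi(x,y,\alpha)|$ is dominated by a polynomial in $y$ with coefficients $R(x,1)$; hence $B$ and $C$ are controlled by $\sum_{k}\mathbb{P}(D_{k,0}^{n}\mid\mathcal{F}_{t_{k-1}^{n}})=\sum_{k}R(\cdots,\varepsilon^{-p}n^{-p(1-\rho)}+n^{-p(1-\rho)})$ and $\sum_{k}\mathbb{P}(D_{k,2}^{n}\mid\mathcal{F}_{t_{k-1}^{n}})\leq n\lambda^{2}/n^{2}$, both negligible under \Aref{a9}. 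For the false-negative term $D$, on $C_{k,1}^{n}$ the jump lies below the threshold, so $\mathbb{P}(C_{k,1}^{n}\mid\mathcal{F}_{t_{k-1}^{n}})=(\lambda/n)\{R(\cdots)+\int_{|z|\leq 4/(c_{1}n^{\rho})}f_{\alpha_{0}}(z)dz\}$; summation produces a factor $\lambda\int_{|z|\leq 4v_{2}/c_{1}n^{\rho}}f_{\alpha_{0}}(z)dz$, which is precisely the quantity assumed to tend to $0$, so $D$ vanishes.

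The delicate point, and the step I expect to be the main obstacle, is the passage to \Aref{a7}, where $\partial_{y}\psi$ is no longer bounded but blows up near the origin like $O(1/|y|^{q})$. The key observation is that on the detection event $D_{k,1}^{n}$ one has $|y_{1}|=|\Delta_{k}^{n}X^{\theta,\varepsilon}|/\varepsilon\geq v_{1}/n^{\rho}$, and since $y_{1}$ and $y_{2}$ differ only by the small continuous correction above, $y_{2}$ stays bounded away from $0$ at a comparable rate; consequently $\sup_{y}|\partial_{y}\psi|$ over the relevant range is $\lesssim n^{q\rho}$. This extra amplification $n^{q\rho}$ is what converts the rate under \Aref{a6} into $O_{p}(n^{-(1-1/p_{1}-q\rho)}+n^{-(1/2-1/p_{1}-q\rho)})$, and the constraints $\rho\in(0,\tfrac{1}{4q})$, $\varepsilon n^{1-q\rho-1/p}\to 0$ and $p\geq 2/(1-2qp)$ are exactly those needed so that the $n^{q\rho}$ factor is absorbed and the remaining sums still vanish. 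Carrying this near-origin control uniformly in $\alpha\in\overline{\Theta}_{3}$ while keeping the exponents consistent across the four groups is where the bookkeeping is most demanding.
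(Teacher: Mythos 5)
Your proposal is correct and follows essentially the same route as the paper: your four groups $A,B,C,D$ are exactly the paper's decomposition into $B_{n,2}$ (the matched one-jump term, treated by the same mean-value-theorem-in-$y$ argument with the growth bound on $\partial_{y}\psi$ and the conditional moment/probability estimates of Lemma \ref{lemma_in_kobayashi}), $B_{n,1}$ (your $B+C$, false positives and multiple jumps) and $B_{n,3}$ (your $D$, false negatives producing the factor $\lambda\int_{\lvert z\rvert\le 4v_{2}/c_{1}n^{\rho}}f_{\alpha_{0}}(z)dz$), which the paper disposes of by citing Lemma 4.12 and Remark 4.1 of Kobayashi and Shimizu \cite{9}. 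Your treatment of the \Aref{a7} case — using the threshold lower bound $\lvert y_{1}\rvert\gtrsim n^{-\rho}$ on $D_{k,1}^{n}$ to cap the near-origin blow-up of $\partial_{y}\psi$ by $n^{q\rho}$ — is precisely the mechanism behind the paper's substitution of $r'_{n,\varepsilon}$ for $r_{n,\varepsilon}$, so the two proofs coincide in substance.
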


     \begin{Lemma}\label{ideal_AN}
        Suppose \Aref{a1} - \Aref{a3}, \Aref{a6}, \Aref{a8} and \Aref{a10}. Then, we obtain that
        \begin{align}
            \begin{pmatrix}
                \int_{V_{\varepsilon}(\mu_{0})}u_{2}\tilde{Z}_{n,\varepsilon,\lambda}^{(0)}[u_{2}] \pi_{2}(\mu_{0} + \varepsilon u_{2})du_{2} \\
                \int_{V_{\varepsilon}(\mu_{0})}\tilde{Z}_{n,\varepsilon,\lambda}^{(0)}[u_{2}] \pi_{2}(\mu_{0} + \varepsilon u_{2})du_{2}
            \end{pmatrix}
            \ind 
            \begin{pmatrix}
                \mathcal{N}(0,I_{0}(\mu_{0}))\\
                I_{0}(\mu_{0})
            \end{pmatrix}
            ,\label{AN_ideal_initial}
        \end{align}
        and for any $p \in \mathbb{N}$,
        \begin{align}
            &\,\sup_{n,\varepsilon,\lambda}\mathbb{E}_{\theta_{0}}\big[\lvert \varepsilon^{-1}(\tilde{\mu}_{n,\varepsilon} - \mu_{0})\rvert^{p}\big] < \infty. 
             \label{moment_conv_ideal_initial} 
        \end{align}
       In addition, suppose \Aref{a1} - \Aref{a5} and \Aref{a8} - \Aref{a10}. Then, the following convergence holds: 
    \begin{align}
    \begin{pmatrix}
        \int_{V_{n}(\sigma_{0})}u_{1}\tilde{Z}_{n,\varepsilon,\lambda}^{(1)}[u_{1}]\pi_{1}(u_{1}/\sqrt{n} + \sigma_{0})du_{1}\\
        \int_{V_{n}(\sigma_{0})}\tilde{Z}_{n,\varepsilon,\lambda}^{(1)}[u_{1}]\pi_{1}(u_{1}/\sqrt{n} + \sigma_{0})du_{1} 
    \end{pmatrix}
    &\ind 
    \begin{pmatrix}
        \mathcal{N}(0,I_{1}(\sigma_{0}))\\
        I_{1}(\sigma_{0})
    \end{pmatrix}
    ,\label{AN_ideal_adaptive_sigma}\\
    \begin{pmatrix}
        \int_{V_{\varepsilon}(\mu_{0})}u_{2}\tilde{Z}_{n,\varepsilon,\lambda}^{(2)}[u_{2}]\pi_{2}(\varepsilon u_{2}+ \mu_{0})du_{2}\\
        \int_{V_{\varepsilon}(\mu_{0})}\tilde{Z}_{n,\varepsilon,\lambda}^{(2)}[u_{2}]\pi_{2}(\varepsilon u_{2}+ \mu_{0})du_{2}
    \end{pmatrix}
    &\ind 
    \begin{pmatrix}
        \mathcal{N}(0,I_{2}(\mu_{0}))\\
        I_{2}(\mu_{0})
    \end{pmatrix}
    ,\label{AN_ideal_adaptive_mu}
    \end{align}
    and
    \begin{align}
    \begin{pmatrix}
        \int_{V_{\lambda}(\alpha_{0})}u_{3}\tilde{Z}_{n,\varepsilon,\lambda}^{(3)}[u_{3}]\pi(u_{3}/\sqrt{\lambda} + \alpha_{0})du_{3}\\
        \int_{V_{\lambda}(\alpha_{0})}\tilde{Z}_{n,\varepsilon,\lambda}^{(3)}[u_{3}]\pi(u_{3}/\sqrt{\lambda} + \alpha_{0})du_{3} 
    \end{pmatrix}
    &\ind 
    \begin{pmatrix}
        \mathcal{N}(0,I_{3}(\alpha_{0}))\\
        I_{3}(\alpha_{0})
    \end{pmatrix}
    ,\label{AN_ideal_adaptive_alpha}
    \end{align}
    and these convergence holds jointly. Moreover, for any $p \in \mathbb{N}$,
      \begin{align}
       &\sup_{n,\varepsilon,\lambda}
       \mathbb{E}_{\theta_{0}}\big[\lvert
        \sqrt{n}(\tilde{\sigma}_{n,\varepsilon} - \sigma_{0})\rvert^{p}\big] < \infty,
        \label{moment_conv_adaptive_sigma}\\
        &\sup_{n,\varepsilon,\lambda}\mathbb{E}_{\theta_{0}}
        \big[\lvert \varepsilon^{-1}(\tilde{\mu}_{n,\varepsilon}
         - \mu_{0})\rvert^{p}\big] < \infty,\label{moment_conv_adaptive_mu}\\ 
     &\sup_{n,\varepsilon,\lambda}\mathbb{E}_{\theta_{0}}
     \big[\lvert \sqrt{\lambda}(\tilde{\alpha}_{\lambda} - 
     \alpha_{0})\rvert^{p}\big] < \infty \label{moment_conv_adaptive_alpha}. 
      \end{align} 
     \end{Lemma}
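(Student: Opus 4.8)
The plan is to carry out the Ibragimov--Has'minskii / Yoshida quasi-likelihood analysis separately on each of the four ideal likelihood ratios $\tilde{Z}_{n,\varepsilon,\lambda}^{(i)}$ and then to splice the pieces together, propagating the adaptivity through the moment bounds proved en route. The whole argument rests on two ingredients applied to each ratio: a locally uniform quadratic (LAN-type) expansion of $\log \tilde{Z}^{(i)}$, and a polynomial-type large deviation estimate controlling $\tilde{Z}^{(i)}[u]$ for large $|u|$. The reason these ideal contrasts are tractable is the event decomposition built into their definitions: on $J_{k,0}^{n}$ the increment $\Delta_{k}^{n}X^{\theta,\varepsilon}$ is purely diffusive, so $\tilde{\Psi}_{n,\varepsilon}^{(0)}$ and $\tilde{\Psi}_{n,\varepsilon}^{(1)}$ behave as exact Gaussian quasi-likelihoods, while on $J_{k,1}^{n}$ there is exactly one jump of size $V_{\tau_{k}}$, so $\tilde{\Psi}_{\lambda}^{(2)}$ is a genuine log-likelihood of the jump sizes.

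I would begin with the initial $\mu$-part \eqref{AN_ideal_initial}. Writing $\log \tilde{Z}_{n,\varepsilon,\lambda}^{(0)}[u_{2}] = \tilde{\Psi}_{n,\varepsilon}^{(0)}(\mu_{0} + \varepsilon u_{2}) - \tilde{\Psi}_{n,\varepsilon}^{(0)}(\mu_{0})$ and Taylor expanding in $\varepsilon u_{2}$ using the smoothness from \Aref{a2}, I would identify the normalized score $\Delta_{0} := \varepsilon\,\partial_{\mu}\tilde{\Psi}_{n,\varepsilon}^{(0)}(\mu_{0})$ and the normalized observed information $\varepsilon^{2}\partial_{\mu}^{2}\tilde{\Psi}_{n,\varepsilon}^{(0)}(\mu_{0})$. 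On $J_{k,0}^{n}$ the small-noise approximation $\Delta_{k}^{n}X^{\theta,\varepsilon} \approx \tfrac1n a + \varepsilon b\,\Delta_{k}^{n}W$ turns the score into a martingale array, so a martingale CLT gives $\Delta_{0} \ind \mathcal{N}(0,I_{0}(\mu_{0}))$ and a law of large numbers gives $\varepsilon^{2}\partial_{\mu}^{2}\tilde{\Psi}_{n,\varepsilon}^{(0)}(\mu_{0}) \inp -I_{0}(\mu_{0})$, with \Aref{a10} ensuring $I_{0}(\mu_{0})$ is nondegenerate. This produces $\log \tilde{Z}_{n,\varepsilon,\lambda}^{(0)}[u_{2}] = \Delta_{0}[u_{2}] - \tfrac12 I_{0}(\mu_{0})[u_{2}^{\otimes 2}] + o_{p}(1)$ uniformly on compacts; evaluating the resulting Gaussian integral and using the tail bound below to justify dominated convergence yields the stated joint limit of the numerator and denominator in \eqref{AN_ideal_initial} (whose ratio is the $\mathcal{N}(0,I_{0}(\mu_{0})^{-1})$ of Theorem \ref{thm1}).

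The second and more laborious ingredient is the large deviation control. Here the identifiability hypothesis \Aref{a5} is decisive, since its four inequalities bound the limiting key functions above by $-\chi_{\bullet}\lvert\lvert\cdot - \cdot_{0}\rvert\rvert$, forcing $\log \tilde{Z}^{(i)}[u]$ to decay linearly in $|u|$ away from the origin. Making this quantitative, I would establish a polynomial-type large deviation inequality $\mathbb{E}_{\theta_{0}}\big[\sup_{|u|\geq r}\tilde{Z}_{n,\varepsilon,\lambda}^{(i)}[u]^{1/2}\big] \lesssim r^{-L}$ for every $L$, uniformly in $(n,\varepsilon,\lambda)$, by combining \Aref{a5} with the moment estimates supplied by \Aref{a2}, \Aref{a4} and the $R(x,1)$-bounds of \Aref{a8}. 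This single estimate does double duty: integrated against the prior it justifies passing the local quadratic limits through the integrals in \eqref{AN_ideal_initial}--\eqref{AN_ideal_adaptive_alpha} by dominated convergence, and through the representation $\varepsilon^{-1}(\tilde{\mu}_{n,\varepsilon}^{(0)} - \mu_{0}) = \int u_{2}\tilde{Z}^{(0)}\pi_{2}\,du_{2}\big/\int \tilde{Z}^{(0)}\pi_{2}\,du_{2}$ together with the Ibragimov--Has'minskii moment lemma it yields the uniform moment bounds \eqref{moment_conv_ideal_initial} and \eqref{moment_conv_adaptive_sigma}--\eqref{moment_conv_adaptive_alpha}. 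For the adaptive ratios I would repeat the expansion on $\tilde{\Psi}_{n,\varepsilon}^{(1)}$ in the $\sigma$- and $\mu$-directions (rates $\sqrt n$ and $\varepsilon^{-1}$) and on $\tilde{\Psi}_{\lambda}^{(2)}$ in the $\alpha$-direction (rate $\sqrt\lambda$, with the jump-size CLT living on $J_{k,1}^{n}$); the plug-in errors from substituting the previously constructed $\tilde{\mu}_{n,\varepsilon}$ and $\tilde{\sigma}_{n,\varepsilon}$ are absorbed by Taylor expanding the contrast around the truth and invoking the already-secured moment bounds. Block-diagonality of $I(\theta_{0})$, which reflects that the diffusion and jump scores are supported on the disjoint families $J_{k,0}^{n}$ and $J_{k,1}^{n}$, then delivers the joint convergence with the stated block-diagonal limit.

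The step I expect to be the \textbf{main obstacle} is establishing the polynomial-type large deviation inequality \emph{uniformly across the three coupled regimes} $n\to\infty$, $\varepsilon\to 0$, $\lambda\to\infty$ constrained by \Aref{a9}. One must keep the diffusion scale $\varepsilon/\sqrt n$ and the jump scale $\varepsilon\lambda/n$ separated, control the $\alpha$-score built from the one-jump sizes $V_{\tau_{k}}$ on $J_{k,1}^{n}$ using \Aref{a4} and the derivative bound \Aref{a6}, confirm that restricting to $J_{k,0}^{n}$ and $J_{k,1}^{n}$ does not corrupt the moments of the ideal contrasts, and verify that the three normalizations $\sqrt n$, $\varepsilon^{-1}$, $\sqrt\lambda$ can be reconciled within a single tail bound so that the moment estimates hold with a constant independent of $(n,\varepsilon,\lambda)$. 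Securing this uniformity is the delicate technical heart; by comparison the local quadratic expansions and the final Gaussian-integral/continuous-mapping assembly are comparatively routine.
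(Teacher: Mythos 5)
Your proposal follows essentially the same route as the paper's proof: a Taylor/martingale-CLT quadratic expansion of each ideal log-likelihood ratio, a polynomial-type large deviation inequality obtained by combining \Aref{a5} with moment estimates on the score, observed information and third derivative (this is precisely the Yoshida Theorem 3 / Theorem 10 machinery the paper invokes, fed by Lemmas \ref{moment_est_contrast0}--\ref{moment_est_contrast3}), the Ibragimov--Has'minskii-type moment lemma for \eqref{moment_conv_ideal_initial} and \eqref{moment_conv_adaptive_sigma}--\eqref{moment_conv_adaptive_alpha}, and plug-in errors absorbed by the previously secured moment bounds. The only point to tighten is the last step: block-diagonality of $I(\theta_{0})$ by itself does not ``deliver'' joint convergence --- the paper obtains it by forming the combined contrast $\tilde{H}$ and applying Theorem 10 (c) of Yoshida (a joint CLT for the four log-ratios plus a joint large deviation bound), with the disjoint supports $J_{k,0}^{n}$ and $J_{k,1}^{n}$ explaining only why the limiting covariance is block-diagonal, so your argument should be completed in that form.
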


We prove Lemma \ref{ideal_AN} combining Theorem 10 in Yoshida \cite{17} and
Lemmas \ref{moment_est_contrast0} - \ref{moment_est_contrast3}, which follow from Lemmas \ref{approx_for_conditional_e} - \ref{moment_ineq_jump}. 
We also show the joint convergence of 
$\tilde{\mu}_{n,\varepsilon},\tilde{\sigma}_{n,\varepsilon}$ and $\tilde{\mu}_{n,\varepsilon}$
and $\tilde{\alpha}_{\lambda}$ using thereom 10 (c) in Yoshida \cite{11}, 
which is applicable when we obtain Lemmas \ref{moment_est_contrast0} - \ref{moment_est_contrast3}.

\begin{Lemma}\label{moment_est_contrast0}
    Suppose \Aref{a1} - \Aref{a3} and \Aref{a9}. Then, there exists $\eta_{0} \in (0,\frac{1}{2})$ for any $p \in \mathbb{N}$ 
    such that
    \begin{itemize}
        \item[(1)]\label{moment_est_contrast0_1} $\displaystyle \sup_{n,\varepsilon,\lambda}\mathbb{E}_{\theta_{0}}\bigg[\bigg\lvert \varepsilon 
        \partial_{\mu}\tilde{\Psi}_{n,\varepsilon}^{(0)}(\mu_{0})\bigg\rvert^{p}\bigg] < \infty,$
        \item[(2)]\label{moment_est_contrast0_2} $\displaystyle \sup_{n,\varepsilon,\lambda}\mathbb{E}_{\theta_{0}}
        \bigg[\bigg\lvert \varepsilon^{-\eta_{0}}\cdot
         \bigg\{\varepsilon^{2} \partial_{\mu}^{2} \tilde{\Psi}_{n,\varepsilon}^{(0)}(\mu_{0})
          + I_{0}(\mu_{0})\bigg\}\bigg\lvert^{p}\bigg] < \infty,$
        \item[(3)]\label{moment_est_contrast0_3} $\displaystyle \sup_{n,\varepsilon,\lambda}\sup_{\mu \in \overline{\Theta}_{2}}
        \mathbb{E}_{\theta_{0}}\bigg[\bigg\lvert \varepsilon^{2}\partial_{\mu}^{3}\tilde{\Psi}_{n,\varepsilon}^{(0)}
        (\mu)\bigg\rvert^{p}\bigg] < \infty.$
    \end{itemize}
\end{Lemma}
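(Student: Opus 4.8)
The three assertions are exactly the moment conditions needed to invoke the polynomial-type large deviation inequality and the Bayesian limit theorem of Yoshida (Theorem 10 in \cite{17}) for the $\mu$-block: (1) controls the normalized score, (2) gives convergence of the normalized observed information to the Fisher information $I_{0}(\mu_{0})$ with a positive rate $\varepsilon^{\eta_{0}}$, and (3) bounds the third derivative so that the Taylor remainder is negligible. The plan is to differentiate the ideal contrast $\tilde{\Psi}_{n,\varepsilon}^{(0)}$ explicitly, reduce every increment on the no-jump event $J_{k,0}^{n}$ to the continuous part of the diffusion, and then bound moments uniformly in $(n,\varepsilon,\lambda)$ by the Burkholder--Davis--Gundy (BDG) inequality together with \Aref{a1}, \Aref{a2}, \Aref{a4} and the growth constraints in \Aref{a9}. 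Writing $g_{k} := \Delta_{k}^{n}X^{\theta,\varepsilon} - \tfrac1n a(X_{t_{k-1}^{n}}^{\theta,\varepsilon},\mu)$,
\begin{align}
\partial_{\mu}\tilde{\Psi}_{n,\varepsilon}^{(0)}(\mu) &= \frac1{\varepsilon^{2}}\sum_{k=1}^{n} g_{k}\,\partial_{\mu}a(X_{t_{k-1}^{n}}^{\theta,\varepsilon},\mu)\,\mathbf{1}_{J_{k,0}^{n}}, \nonumber\\
\varepsilon^{2}\partial_{\mu}^{2}\tilde{\Psi}_{n,\varepsilon}^{(0)}(\mu) &= \sum_{k=1}^{n} g_{k}\,\partial_{\mu}^{2}a(X_{t_{k-1}^{n}}^{\theta,\varepsilon},\mu)\,\mathbf{1}_{J_{k,0}^{n}} - \frac1n\sum_{k=1}^{n}\partial_{\mu}a(X_{t_{k-1}^{n}}^{\theta,\varepsilon},\mu)^{\otimes 2}\,\mathbf{1}_{J_{k,0}^{n}}, \nonumber
\end{align}
and $\varepsilon^{2}\partial_{\mu}^{3}$ produces a $g_{k}\partial_{\mu}^{3}a$ term plus $\tfrac1n$-sums of products of lower derivatives of $a$. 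The key structural fact is that on $J_{k,0}^{n}$ the jump term is absent, so at $\mu=\mu_{0}$
\begin{align}
g_{k}\,\mathbf{1}_{J_{k,0}^{n}} &= \left(\int_{t_{k-1}^{n}}^{t_{k}^{n}}\big(a(X_{s}^{\theta,\varepsilon},\mu_{0})-a(X_{t_{k-1}^{n}}^{\theta,\varepsilon},\mu_{0})\big)\,ds + \varepsilon\int_{t_{k-1}^{n}}^{t_{k}^{n}} b(X_{s}^{\theta,\varepsilon},\sigma_{0})\,dW_{s}\right)\mathbf{1}_{J_{k,0}^{n}}. \nonumber
\end{align}

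For (1) I would multiply by $\varepsilon$ and split according to the last display. The martingale part $\sum_{k}\partial_{\mu}a(X_{t_{k-1}^{n}}^{\theta,\varepsilon},\mu_{0})\big(\int_{t_{k-1}^{n}}^{t_{k}^{n}}b\,dW_{s}\big)\mathbf{1}_{J_{k,0}^{n}}$ has its $L^{p}$-moment bounded, via BDG and \Aref{a2}, by a constant multiple of $\big(\sum_{k}\int_{t_{k-1}^{n}}^{t_{k}^{n}}(\partial_{\mu}a)^{2}b^{2}\,ds\big)^{1/2}=O(1)$; the drift part, after the Lipschitz bound of \Aref{a1} and standard small-noise $L^{p}$-estimates for $\sup_{s}|X_{s}^{\theta,\varepsilon}-X_{t_{k-1}^{n}}^{\theta,\varepsilon}|$ on $J_{k,0}^{n}$, is of order $\tfrac1{n\varepsilon}+\tfrac1{\sqrt n}$, which is $O(1)$ because \Aref{a9} yields $1/\sqrt n=O(\varepsilon)$. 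The non-predictability of $\mathbf{1}_{J_{k,0}^{n}}$ is handled by conditioning on the Poisson structure and using the independence of $N^{\lambda}$ and $W$.

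For (2) the second $\tfrac1n$-sum is a Riemann sum, which I would compare with $I_{0}(\mu_{0})=\int_{0}^{1}\partial_{\mu}a(x_{t},\mu_{0})^{\otimes 2}\,dt$ through three errors: replacing $\mathbf{1}_{J_{k,0}^{n}}$ by $1$ (the defect has conditional probability $O(\lambda/n)$ by Lemma \ref{lemma_in_kobayashi}); replacing $X_{t_{k-1}^{n}}^{\theta,\varepsilon}$ by the ODE value $x_{t_{k-1}^{n}}$ (an $O(\varepsilon)$ fluctuation by small-noise moment bounds and the smoothness of \Aref{a2}); and the deterministic quadrature error $O(1/n)=O(\varepsilon^{2})$. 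The first sum $\sum_{k}g_{k}\partial_{\mu}^{2}a\,\mathbf{1}_{J_{k,0}^{n}}$ is $O(\varepsilon)$ in every $L^{p}$-norm by the same BDG argument as in (1). Hence $\varepsilon^{2}\partial_{\mu}^{2}\tilde{\Psi}_{n,\varepsilon}^{(0)}(\mu_{0})+I_{0}(\mu_{0})=O(\varepsilon)$ in $L^{p}$, so any $\eta_{0}\in(0,\tfrac12)$ leaves the surplus factor $\varepsilon^{1-\eta_{0}}$ bounded uniformly in $(n,\varepsilon,\lambda)$.

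For (3) the representation of $\varepsilon^{2}\partial_{\mu}^{3}\tilde{\Psi}_{n,\varepsilon}^{(0)}(\mu)$ as a $g_{k}$-weighted sum plus $\tfrac1n$-Riemann sums is bounded in $L^{p}$, uniformly in $\mu\in\overline{\Theta}_{2}$, by the $C_{\uparrow}^{6,4}$-bounds of \Aref{a2} (which supply the uniform-in-$\mu$ polynomial envelope $R(x,1)$), the moment bound \Aref{a4}, and the small-noise $L^{p}$-estimates for $X^{\theta,\varepsilon}$. I expect the main obstacle to be part (2): extracting the precise rate $\varepsilon^{\eta_{0}}$ forces one to control the stochastic fluctuation of the information Riemann sum and the $X^{\theta,\varepsilon}\to x$ error simultaneously in the triple limit, uniformly in $(n,\varepsilon,\lambda)$, while the non-adapted indicators $\mathbf{1}_{J_{k,0}^{n}}$ couple the Brownian and Poisson parts; this is exactly where the auxiliary conditional-moment and moment-inequality lemmas announced just before this statement enter.
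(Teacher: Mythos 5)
Your proposal is correct and takes essentially the same route as the paper: the paper likewise differentiates $\tilde{\Psi}_{n,\varepsilon}^{(0)}$ explicitly, writes $\varepsilon\partial_{\mu}\tilde{\Psi}_{n,\varepsilon}^{(0)}(\mu_{0})$, $\varepsilon^{2}\partial_{\mu}^{2}\tilde{\Psi}_{n,\varepsilon}^{(0)}(\mu_{0})+I_{0}(\mu_{0})$ and $\varepsilon^{2}\partial_{\mu}^{3}\tilde{\Psi}_{n,\varepsilon}^{(0)}(\mu)$ as $\tilde{\eta}_{k}^{n}(\mu_{0})$-weighted sums plus Riemann-sum discrepancies, and then obtains your $O(\varepsilon)$ rate in $L^{p}$ (hence $\varepsilon^{(1-\eta_{0})p}$ bounded for $\eta_{0}\in(0,\tfrac{1}{2})$) by citing the moment inequalities of Lemma \ref{moment_INEQ_diffusion}, which encapsulate exactly the BDG, small-noise and indicator-defect estimates you re-derive inline. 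The only micro-level difference is that the paper (inside the proof of Lemma \ref{moment_INEQ_diffusion}) handles the non-adapted indicator $\bm{1}_{J_{k,0}^{n}}$ by centering the increments at their $\mathcal{F}_{t_{k-1}^{n}}$-conditional means so as to work with genuine martingale differences, rather than by conditioning on the Poisson path as you suggest; both devices are standard and yield the same bounds.
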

\begin{Lemma}\label{moment_est_contrast1}
    Suppose \Aref{a1} - \Aref{a3}, \Aref{a9} and \eqref{moment_conv_ideal_initial} holds for any $p \in \mathbb{N}$. Then, there exists $\eta_{1} \in (0,\frac{1}{2})$ for any $p \in \mathbb{N}$ such that
    \begin{enumerate}
        \item[(1)]\label{moment_est_contrast1_1} $\displaystyle \sup_{n,\varepsilon,\lambda}\mathbb{E}_{\theta_{0}}\bigg[\bigg\lvert \frac{1}{\sqrt{n}} \partial_{\sigma}\tilde{\Psi}_{n,\varepsilon}^{(1)}(\sigma_{0},\tilde{\mu}_{n,\varepsilon}^{(0)})\bigg\rvert^{p}\bigg] < \infty,$
        \item[(2)]\label{moment_est_contrast1_2} $\displaystyle \sup_{n,\varepsilon,\lambda}
        \mathbb{E}_{\theta_{0}}\bigg[\bigg\lvert n^{\eta_{1}}\cdot 
        \bigg\{\frac{1}{n} \partial_{\sigma}^{2} \tilde{\Psi}_{n,\varepsilon}^{(1)}
        (\sigma_{0},\tilde{\mu}_{n,\varepsilon}^{(0)})
         + I_{1}(\sigma_{0})\bigg\}\bigg\lvert^{p}\bigg] < \infty,$ 
        \item[(3)]\label{moment_est_contrast1_3} $\displaystyle \sup_{n,\varepsilon,\lambda}\mathbb{E}_{\theta_{0}}\bigg[\sup_{\sigma \in \overline{\Theta}_{2}}\bigg\lvert \frac{1}{n}\partial_{\sigma}^{3}\tilde{\Psi}_{n,\varepsilon}^{(1)}(\sigma,\tilde{\mu}_{n,\varepsilon}^{(0)})\bigg\rvert^{p}\bigg] < \infty.$
    \end{enumerate} 
\end{Lemma}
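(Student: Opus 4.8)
The plan is to mirror the proof of Lemma \ref{moment_est_contrast0}, now working in the $\sigma$-direction and with the random initial estimator $\tilde{\mu}_{n,\varepsilon}^{(0)}$ sitting in the drift slot. First I would exploit the indicator $\mathbf{1}_{J_{k,0}^n}$: on $J_{k,0}^n$ no jump occurs on $(t_{k-1}^n,t_k^n]$, so the residual $\zeta_k(\mu):=\Delta_k^nX^{\theta,\varepsilon}-\tfrac1n a(X_{t_{k-1}^n}^{\theta,\varepsilon},\mu)$ reduces to the diffusion contribution $\varepsilon\, b(X_{t_{k-1}^n}^{\theta,\varepsilon},\sigma_0)\,\Delta_k^nW$ (with $\Delta_k^nW:=W_{t_k^n}-W_{t_{k-1}^n}$) plus a drift term of order $1/n$ and a diffusion-discretization remainder. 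Writing $b_k(\sigma):=b(X_{t_{k-1}^n}^{\theta,\varepsilon},\sigma)$ and $A_k:=\zeta_k(\mu)^2/(\varepsilon^2/n)$, the $\sigma$-gradient is
\begin{align}
\partial_\sigma \tilde{\Psi}_{n,\varepsilon}^{(1)}(\sigma,\mu)=\sum_{k=1}^n\frac{\partial_\sigma b_k(\sigma)}{b_k(\sigma)}\Bigl\{A_k\,b_k(\sigma)^{-2}-1\Bigr\}\mathbf{1}_{J_{k,0}^n}, \nonumber
\end{align}
and at $(\sigma_0,\mu_0)$ the bracket equals $n(\Delta_k^nW)^2-1$ up to small-noise and discretization errors, a conditionally centred quantity with unit conditional mean. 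Throughout, the substitution error from replacing $\tilde{\mu}_{n,\varepsilon}^{(0)}$ by $\mu_0$ will be absorbed by a first-order Taylor expansion in $\mu$, using that each $\partial_\mu$ produces a factor $1/n$, that $\|\varepsilon^{-1}(\tilde{\mu}_{n,\varepsilon}^{(0)}-\mu_0)\|_{L^p}$ is bounded by \eqref{moment_conv_ideal_initial}, and that $1/(n\varepsilon^2)=O(1)$ by \Aref{a9}.

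For (1) I would decompose $\tfrac1{\sqrt n}\partial_\sigma\tilde{\Psi}_{n,\varepsilon}^{(1)}(\sigma_0,\tilde{\mu}_{n,\varepsilon}^{(0)})$ into the value at $\mu_0$ plus a mean-value remainder $\tfrac1{\sqrt n}\,\partial_\mu\partial_\sigma\tilde{\Psi}_{n,\varepsilon}^{(1)}(\sigma_0,\bar\mu)[\tilde{\mu}_{n,\varepsilon}^{(0)}-\mu_0]$. Up to negligible terms the $\mu_0$-part is $\tfrac1{\sqrt n}\sum_k(\partial_\sigma b_k/b_k)(n(\Delta_k^nW)^2-1)$, a normalized sum of martingale differences whose $L^p$-boundedness follows from the Burkholder--Davis--Gundy/Rosenthal inequality once the coefficients are controlled in $L^p$ via \Aref{a2}, \Aref{a3} and the moment bounds on $X^{\theta,\varepsilon}$ (the conditional-expectation and moment-inequality lemmas of this section). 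The remainder carries an explicit factor $1/\varepsilon$ from $\partial_\mu\partial_\sigma$, which is exactly cancelled by the $\varepsilon$-rate of $\tilde{\mu}_{n,\varepsilon}^{(0)}$ through Hölder's inequality and \eqref{moment_conv_ideal_initial}, leaving an $O(n^{-1/2})$ bound.

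For (2) I would expand $\tfrac1n\partial_\sigma^2\tilde{\Psi}_{n,\varepsilon}^{(1)}(\sigma_0,\tilde{\mu}_{n,\varepsilon}^{(0)})$ and split the error $\tfrac1n\partial_\sigma^2\tilde{\Psi}_{n,\varepsilon}^{(1)}+I_1(\sigma_0)$ into four pieces: (a) the conditionally centred fluctuation of $A_kb_k^{-2}-1$ about its mean, which is $O_{L^p}(n^{-1/2})$ by BDG; (b) the Riemann-sum error $\tfrac1n\sum_k g(X_{t_{k-1}^n}^{\theta,\varepsilon})-\int_0^1 g(x_t)\,dt$ with $g=-2(\partial_\sigma b/b)^{\otimes2}$, handled by the small-noise convergence $X^{\theta,\varepsilon}\to x$ together with an $O(1/n)$ deterministic discretization bound; (c) the drift and diffusion-discretization remainders hidden in $A_k$; and (d) the jump-censoring error from $\mathbf{1}_{J_{k,0}^n}$ versus $1$, whose size is governed by the jump probability $\mathbb{P}((J_{k,0}^n)^{c}\mid\mathcal F_{t_{k-1}^n})=1-e^{-\lambda/n}=O(\lambda/n)$ (cf. Lemma \ref{lemma_in_kobayashi}). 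Identifying the deterministic limit as $-I_1(\sigma_0)=-2\int_0^1\bigl(\partial_\sigma b(x_t,\sigma_0)/b(x_t,\sigma_0)\bigr)^{\otimes2}\,dt$ and taking $\eta_1\in(0,\tfrac12)$ below the slowest of these rates, all four pieces tend to $0$ in $L^p$ uniformly in $(n,\varepsilon,\lambda)$ under \Aref{a9}. For (3) the third derivative $\tfrac1n\partial_\sigma^3\tilde{\Psi}_{n,\varepsilon}^{(1)}(\sigma,\tilde{\mu}_{n,\varepsilon}^{(0)})$ is a sum whose summands are products of $A_k$ (with uniformly bounded $L^p$ moments) and the $\sigma$-derivatives up to order three of $b_k^{-2}$ and $\log b_k^2$; each such factor is dominated, uniformly in $\sigma\in\overline{\Theta}_2$, by an $R(X_{t_{k-1}^n}^{\theta,\varepsilon},1)$-function through \Aref{a2}--\Aref{a3}, so the $\sup_\sigma$ can be pulled inside and the $\tfrac1n$-average stays bounded in $L^p$.

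The hard part will be item (2), and within it the simultaneous bookkeeping of the four error sources so that they collapse to a single exponent $\eta_1\in(0,\tfrac12)$ holding uniformly in $(n,\varepsilon,\lambda)$: the CLT-type fluctuation (a) already pins $\eta_1$ strictly below $\tfrac12$, while (b)--(d) must be shown to vanish no slower under the joint regime of \Aref{a9}. A further subtlety, absent in Lemma \ref{moment_est_contrast0}, is that the drift slot is filled by the random estimator $\tilde{\mu}_{n,\varepsilon}^{(0)}$, which depends on the whole path; hence every conditioning on $\mathcal F_{t_{k-1}^n}$ must first be carried out at the deterministic $\mu_0$ and the substitution error reintroduced afterwards, the control of this step resting squarely on the moment bound \eqref{moment_conv_ideal_initial} and the balance $1/(n\varepsilon^2)=O(1)$.
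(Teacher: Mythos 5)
Your proposal follows essentially the same route as the paper's proof: the paper likewise expands in $\mu$ around $\mu_{0}$ (controlling the substitution error coming from $\tilde{\mu}_{n,\varepsilon}^{(0)}$ through \eqref{moment_conv_ideal_initial} and the balance $1/(n\varepsilon^{2})=O(1)$ of \Aref{a9}), bounds the leading conditionally centred terms by the martingale moment inequalities of Lemma \ref{moment_INEQ_diffusion} (items (3)--(5) there play the role of your BDG/Rosenthal step), and absorbs the Riemann-sum and jump-censoring errors via Lemma \ref{moment_INEQ_diffusion} (2). The only difference is organizational: the paper expands the square $\tilde{\eta}_{k}^{n}(\tilde{\mu}_{n,\varepsilon}^{(0)})^{2}$ directly into a $\mu_{0}$-term, a cross term and a quadratic term instead of invoking a mean-value form, which amounts to the same computation.
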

\begin{Lemma}\label{moment_est_contrast2}
    Suppose \Aref{a1} - \Aref{a3}, \Aref{a9} and \eqref{moment_conv_adaptive_sigma} holds for any $p \in \mathbb{N}$. There exists $\eta_{2} \in (0,\frac{1}{2})$ for any $p \in \mathbb{N}$ such that
\begin{enumerate}
    \item[(1)]\label{moment_est_contrast2_1} $\,\displaystyle \sup_{n,\varepsilon,\lambda}\mathbb{E}_{\theta_{0}}\bigg[\bigg\lvert \varepsilon \partial_{\mu}\tilde{\Psi}_{n,\varepsilon}^{(1)}(\tilde{\sigma}_{n,\varepsilon},\mu_{0})\bigg\rvert^{p}\bigg] < \infty,$
    \item[(2)]\label{moment_est_contrast2_2} $\displaystyle \sup_{n,\varepsilon,\lambda}\mathbb{E}_{\theta_{0}}\bigg[\bigg\lvert \varepsilon^{-\eta_{2}}\cdot \bigg\{\varepsilon^{2} \partial_{\mu}^{2} \tilde{\Psi}_{n,\varepsilon}^{(1)}(\tilde{\sigma}_{n,\varepsilon},\mu_{0})
         + I_{2}(\mu_{0})\bigg\}\bigg\lvert^{p}\bigg] < \infty,$
    \item[(3)]\label{moment_est_contrast2_3}
    $\displaystyle \sup_{n,\varepsilon,\lambda}\mathbb{E}_{\theta_{0}}\bigg[ \bigg\lvert \sup_{\mu \in \overline{\Theta}_{2}} \varepsilon^{2}\partial_{\mu}^{3}\tilde{\Psi}_{n,\varepsilon}^{(1)}(\tilde{\sigma}_{n,\varepsilon},\mu)\bigg\rvert^{p}\bigg] < \infty.$
\end{enumerate}
\end{Lemma}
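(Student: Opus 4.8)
The plan is to follow the same three-step scheme already used for Lemmas \ref{moment_est_contrast0} and \ref{moment_est_contrast1}, the only genuinely new features being the random, non-adapted plug-in $\tilde{\sigma}_{n,\varepsilon}$ in place of $\sigma_0$ and the state-dependent weight $b^{-2}(X_{t_{k-1}^n}^{\theta,\varepsilon},\sigma)$. First I would write out the three derivatives explicitly. Since the $\log b^2$ term is free of $\mu$, differentiating \eqref{contrast1} restricted to $J_{k,0}^n$ gives
\begin{align}
\varepsilon\partial_{\mu}\tilde{\Psi}_{n,\varepsilon}^{(1)}(\sigma,\mu_0) &= \sum_{k=1}^n \frac{\big(\Delta_k^n X^{\theta,\varepsilon} - \tfrac1n a(X_{t_{k-1}^n}^{\theta,\varepsilon},\mu_0)\big)\,\partial_\mu a(X_{t_{k-1}^n}^{\theta,\varepsilon},\mu_0)}{\varepsilon\, b^2(X_{t_{k-1}^n}^{\theta,\varepsilon},\sigma)}\mathbf{1}_{J_{k,0}^n}, \nonumber
\end{align}
while $\varepsilon^2\partial_\mu^2\tilde{\Psi}^{(1)}$ splits into a Riemann-sum term $-\sum_k \tfrac1n b^{-2}(\partial_\mu a)^{\otimes2}\mathbf{1}_{J_{k,0}^n}$ and a martingale-type term carrying the centered factor $\Delta_k^n X^{\theta,\varepsilon}-\tfrac1n a$, and $\varepsilon^2\partial_\mu^3\tilde{\Psi}^{(1)}$ is a finite combination of such terms with $a$-derivatives up to order three.

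Next, to remove the non-adapted plug-in I would expand in $\sigma$ around $\sigma_0$, e.g. $\varepsilon\partial_\mu\tilde{\Psi}^{(1)}(\tilde{\sigma}_{n,\varepsilon},\mu_0)=\varepsilon\partial_\mu\tilde{\Psi}^{(1)}(\sigma_0,\mu_0)+\varepsilon\,\partial_\sigma\partial_\mu\tilde{\Psi}^{(1)}(\check{\sigma},\mu_0)[\tilde{\sigma}_{n,\varepsilon}-\sigma_0]$ for an intermediate $\check{\sigma}$. Using \Aref{a3} ($b$ bounded away from $0$) and \Aref{a2} together with the moment bounds for $\sup_t|X_t^{\theta,\varepsilon}|$, the cross derivative is $O_p(1)$ uniformly in $\sigma$, so by H\"older's inequality and the standing hypothesis \eqref{moment_conv_adaptive_sigma}, which makes $\sqrt{n}(\tilde{\sigma}_{n,\varepsilon}-\sigma_0)$ bounded in every $L^p$, the correction is $O_p(1/\sqrt{n})$. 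Since $1/\sqrt{n}\lesssim\varepsilon$ by \Aref{a9}, this leaves the leading moments unchanged and is absorbed into the rate $\varepsilon^{\eta_2}$ of part (2), so it suffices to prove all three bounds at the fixed value $\sigma_0$.

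For the fixed-$\sigma_0$ estimates I would substitute the no-jump representation $\Delta_k^n X^{\theta,\varepsilon}=\int_{t_{k-1}^n}^{t_k^n}a(X_s^{\theta,\varepsilon},\mu_0)\,ds+\varepsilon\int_{t_{k-1}^n}^{t_k^n}b(X_s^{\theta,\varepsilon},\sigma_0)\,dW_s$ valid on $J_{k,0}^n$, and split each sum into a principal part and martingale remainders. In part (1) the principal part is the discrete stochastic integral $\sum_k b_k^{-2}\partial_\mu a_k\int_{t_{k-1}^n}^{t_k^n}b\,dW$, whose moments are bounded by the Burkholder--Davis--Gundy inequality and whose predictable quadratic variation is the Riemann sum converging to $I_2(\mu_0)$; the drift-discretization remainder is $o_p(1)$. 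In part (2) the Riemann-sum term converges to $-I_2(\mu_0)$, and the remaining error decomposes into the time-discretization gap controlled by \Aref{a1}--\Aref{a2}, the small-noise gap $X^{\theta,\varepsilon}-x$, the jump-restriction error $\sum_k\tfrac1n(1-\mathbf{1}_{J_{k,0}^n})$ whose conditional moments are $O(\lambda/n)$ by Lemma \ref{lemma_in_kobayashi}, and the mean-zero martingale term of order $\varepsilon$; via the conditional-moment and jump-moment estimates of Lemmas \ref{approx_for_conditional_e}--\ref{moment_ineq_jump} and \Aref{a9}, each of these survives multiplication by $\varepsilon^{-\eta_2}$ with a uniform $L^p$ bound once $\eta_2\in(0,\tfrac12)$ is taken small enough (depending on $\delta$). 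Part (3) is the easiest: the $\sup_\mu$ is handled by the uniform polynomial growth of $\partial_x^m\partial_\mu^{\bm{n}}a$ from $a\in C_\uparrow^{6,4}$ in \Aref{a2}, so $\sup_\mu|\varepsilon^2\partial_\mu^3\tilde{\Psi}^{(1)}(\sigma_0,\mu)|$ is dominated by a polynomial in $\sup_t|X_t^{\theta,\varepsilon}|$ with bounded moments.

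The main obstacle I anticipate is the bookkeeping in part (2): establishing the explicit rate $\varepsilon^{\eta_2}$ requires simultaneously balancing four heterogeneous error sources -- jump restriction ($\lambda/n$), small-noise approximation ($X^{\theta,\varepsilon}-x$, of order $\varepsilon\lambda$), time-discretization ($1/n$), and the non-adapted plug-in ($1/\sqrt{n}$) -- and verifying, using \Aref{a9} (in particular $1/(n\varepsilon^2)=O(1)$, so $1/\sqrt{n}\lesssim\varepsilon$, together with $\varepsilon\lambda^{1+\delta}=o(1)$ and $\lambda^2/n^{1-\delta}=o(1)$), that the product of $\varepsilon^{-\eta_2}$ with each source stays $L^p$-bounded; this is precisely what pins down the admissible range of $\eta_2$. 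The delicate point is that $\tilde{\sigma}_{n,\varepsilon}$ is $\mathcal{F}_1$-measurable and destroys the martingale structure, so the reduction to $\sigma_0$ via the Taylor expansion and \eqref{moment_conv_adaptive_sigma} must be carried out \emph{before} any Burkholder--Davis--Gundy or conditional-expectation argument can be applied.
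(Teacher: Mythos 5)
Your proposal matches the paper's proof in all essentials: the paper likewise Taylor-expands $b^{2}(\cdot,\tilde{\sigma}_{n,\varepsilon})^{-1}$ around $\sigma_{0}$ to extract the factor $\sqrt{n}(\tilde{\sigma}_{n,\varepsilon}-\sigma_{0})$ (controlled in every $L^{p}$ by \eqref{moment_conv_adaptive_sigma}), decomposes $\varepsilon^{2}\partial_{\mu}^{2}\tilde{\Psi}_{n,\varepsilon}^{(1)}(\tilde{\sigma}_{n,\varepsilon},\mu_{0})+I_{2}(\mu_{0})$ into the martingale term $\sum_{k}\tilde{\eta}_{k}^{n}(\mu_{0})\partial_{\mu}^{2}a\,b^{-2}$, the plug-in correction of order $n^{-3/2}\sqrt{n}(\tilde{\sigma}_{n,\varepsilon}-\sigma_{0})$, and the Riemann-sum term converging to $-I_{2}(\mu_{0})$, bounds each via Lemma \ref{moment_INEQ_diffusion} (1)--(3), and closes with \Aref{a9} (i.e.\ $1/\sqrt{n}\lesssim\varepsilon$), exactly as you outline. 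The only cosmetic difference is in parts (1) and (3), which the paper dispatches by citing the same argument as for Lemma \ref{moment_est_contrast0}, where the $\sup_{\theta}$ \emph{inside} the expectation in Lemma \ref{moment_INEQ_diffusion} absorbs the non-adapted plug-in $\tilde{\sigma}_{n,\varepsilon}$ directly, whereas you re-run a Taylor expansion there as well---equivalent in substance.
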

\begin{Lemma}\label{moment_est_contrast3}
    Suppose \Aref{a3}, \Aref{a4}, \Aref{a8} and \Aref{a9}. There exists $\eta_{3} \in (0,\frac{1}{2})$ for any $p \in \mathbb{N}$ such that
    \begin{enumerate}
        \item[(1)]\label{moment_est_contrast3_1}
        $\displaystyle \sup_{n,\varepsilon,\lambda}\mathbb{E}_{\theta_{0}}\bigg[\bigg\lvert \frac{1}{\sqrt{\lambda}} \partial_{\alpha}\tilde{\Psi}_{\lambda}^{(2)}(\alpha_{0})\bigg\rvert^{p}\bigg] < \infty.$
        \item[(2)]\label{moment_est_contrast3_2}
        $\displaystyle \sup_{n,\varepsilon,\lambda}\mathbb{E}_{\theta_{0}}\bigg[\bigg\lvert \lambda^{\eta_{3}}\cdot 
        \bigg\{\frac{1}{\lambda} \partial_{\alpha}^{2}\tilde{\Psi}_{\lambda}^{(2)}(\alpha_{0}) + I_{3}(\alpha_{0})\bigg\}\bigg\lvert^{p}\bigg] < \infty.$
        \item[(3)]\label{moment_est_contrast3_3}
        $\displaystyle \sup_{n,\varepsilon,\lambda}\mathbb{E}_{\theta_{0}}\bigg[ \bigg(\sup_{\alpha} \bigg \lvert \frac{1}{\lambda}\partial_{\alpha}^{3}\Psi_{\lambda}^{(2)}(\alpha) \bigg \rvert \bigg)^{p}\bigg] < \infty.$
    \end{enumerate}
\end{Lemma}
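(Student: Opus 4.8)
The plan is to reduce all three estimates to a single martingale structure over the jump-detection intervals. Writing out the derivatives in the third ($\alpha$-)argument of $\psi$, for $m=1,2,3$ one has
\[
\partial_{\alpha}^{m}\tilde{\Psi}_{\lambda}^{(2)}(\alpha_{0})=\sum_{k=1}^{n}\partial_{\alpha}^{m}\psi\bigl(X_{t_{k-1}^{n}}^{\theta,\varepsilon},c(X_{t_{k-1}^{n}}^{\theta,\varepsilon},\alpha_{0})V_{\tau_{k}},\alpha_{0}\bigr)\mathbf{1}_{J_{k,1}^{n}}.
\]
Two identities are central: the score identity
\[
\int_{E}\partial_{\alpha_{i}}\psi(x,c(x,\alpha_{0})z,\alpha_{0})f_{\alpha_{0}}(z)\,dz=0,
\]
and the information identity
\[
\int_{E}\partial_{\alpha_{i}}\partial_{\alpha_{j}}\psi(x,c(x,\alpha_{0})z,\alpha_{0})f_{\alpha_{0}}(z)\,dz=-\int_{E}\partial_{\alpha_{i}}\psi\,\partial_{\alpha_{j}}\psi\,(x,c(x,\alpha_{0})z,\alpha_{0})f_{\alpha_{0}}(z)\,dz,
\]
both obtained by differentiating $\int_{E}f_{\alpha_{0}}=1$ under the integral sign; the interchange is licensed by the uniform-in-$\alpha$ integrability of the $\alpha$-derivatives of $\psi$ up to order four in \Aref{a8}.

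For (1), note that on $J_{k,1}^{n}$ the unique jump $V_{\tau_{k}}$ is independent of $\mathcal{F}_{t_{k-1}^{n}}$ with density $f_{\alpha_{0}}$, so the score identity yields $\mathbb{E}_{\theta_{0}}[\partial_{\alpha}\psi(\cdots)\mathbf{1}_{J_{k,1}^{n}}\mid\mathcal{F}_{t_{k-1}^{n}}]=\mathbb{P}(J_{k,1}^{n}\mid\mathcal{F}_{t_{k-1}^{n}})\int_{E}\partial_{\alpha}\psi\,f_{\alpha_{0}}\,dz=0$. Hence $\{\partial_{\alpha}\psi(\cdots)\mathbf{1}_{J_{k,1}^{n}}\}_{k}$ is a martingale-difference array for $(\mathcal{F}_{t_{k}^{n}})_{k}$, and a Burkholder--Davis--Gundy-type jump moment inequality (Lemma \ref{moment_ineq_jump}) bounds its $p$-th moment by that of the predictable quadratic variation $\sum_{k}\mathbb{E}_{\theta_{0}}[|\partial_{\alpha}\psi|^{2}\mathbf{1}_{J_{k,1}^{n}}\mid\mathcal{F}_{t_{k-1}^{n}}]$. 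Since $\mathbb{P}(J_{k,1}^{n}\mid\mathcal{F}_{t_{k-1}^{n}})=\tfrac{\lambda}{n}(1+o(1))$ by Lemma \ref{lemma_in_kobayashi} and $\int_{E}|\partial_{\alpha}\psi|^{p}f_{\alpha_{0}}\,dz=R(x,1)$ by \Aref{a8}, this quadratic variation equals $\lambda\cdot\tfrac{1}{n}\sum_{k}R(X_{t_{k-1}^{n}}^{\theta,\varepsilon},1)=O(\lambda)$ in $L^{p/2}$, using the uniform moment bounds for $X^{\theta,\varepsilon}$ together with \Aref{a4}. Dividing by $\sqrt{\lambda}$ gives (1).

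For (2), I decompose $\tfrac{1}{\lambda}\partial_{\alpha}^{2}\tilde{\Psi}_{\lambda}^{(2)}(\alpha_{0})+I_{3}(\alpha_{0})$ into a centered martingale fluctuation and a deterministic bias. The fluctuation $\tfrac{1}{\lambda}\sum_{k}\{\partial_{\alpha}^{2}\psi(\cdots)\mathbf{1}_{J_{k,1}^{n}}-\mathbb{E}_{\theta_{0}}[\,\cdot\mid\mathcal{F}_{t_{k-1}^{n}}]\}$ is again controlled by the jump moment inequality and has $L^{p}$-size $O(\lambda^{-1/2})$, so after multiplication by $\lambda^{\eta_{3}}$ with $\eta_{3}<\tfrac12$ it stays bounded. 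The bias, via the information identity and $\mathbb{P}(J_{k,1}^{n}\mid\mathcal{F}_{t_{k-1}^{n}})=\tfrac{\lambda}{n}(1+o(1))$, reduces to comparing $\tfrac{1}{n}\sum_{k}g(X_{t_{k-1}^{n}}^{\theta,\varepsilon})$ with $-I_{3}(\alpha_{0})=\int_{0}^{1}g(x_{t})\,dt$, where $g(x):=\int_{E}\partial_{\alpha}^{2}\psi(x,c(x,\alpha_{0})z,\alpha_{0})f_{\alpha_{0}}(z)\,dz$. This error splits into: (a) replacing $X_{t_{k-1}^{n}}^{\theta,\varepsilon}$ by the ODE value $x_{t_{k-1}^{n}}$, of size $O_{p}(\varepsilon)$ via $\sup_{t}|X_{t}^{\theta,\varepsilon}-x_{t}|=O_{p}(\varepsilon)$ and the continuity of $g$ furnished by \Aref{a8} and Remark \ref{r_a_2}; (b) the Riemann-sum error $O(1/n)$; and (c) the Poisson-approximation error $O(\lambda/n)$. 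Choosing $\eta_{3}\in(0,\tfrac12)$ below the exponents governing (a)--(c) and invoking \Aref{a9} makes $\lambda^{\eta_{3}}$ times the bias vanish, giving (2).

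For (3), concerning the observed contrast $\Psi_{\lambda}^{(2)}$, I dominate $\sup_{\alpha}|\partial_{\alpha}^{3}\psi|$ by integrating the fourth $\alpha$-derivative, so that the uniform bound $\sup_{\alpha}\int_{E}|\partial_{\bm{\alpha}}^{\bm{k}}\psi|^{p}f_{\alpha_{0}}\,dz=R(x,1)$ for $|\bm{k}|\le4$ in \Aref{a8} applies. Because $\sum_{k}\mathbf{1}_{D_{k}^{n}}$ is of order $\lambda$ and each summand is $O_{p}(R(X_{t_{k-1}^{n}}^{\theta,\varepsilon},1)(1+|V_{\tau_{k}}|^{C}))$ with all moments of $V_{\tau_{k}}$ finite by \Aref{a4}, the sum is $O_{p}(\lambda)$ in $L^{p}$, and dividing by $\lambda$ yields the bound uniformly in $(n,\varepsilon,\lambda)$. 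I expect the main obstacle to be part (2): selecting one exponent $\eta_{3}$ that simultaneously dominates the small-noise error (a), the discretization error (b) and the Poisson-approximation error (c), while respecting the competing constraints $1/n\varepsilon^{2}=O(1)$, $\lambda^{2}/n^{1-\delta}=o(1)$ and $\varepsilon\lambda^{1+\delta}=o(1)$ of \Aref{a9}; the simultaneous limits $n\to\infty$, $\varepsilon\downarrow0$, $\lambda\to\infty$ make the rate bookkeeping the delicate step.
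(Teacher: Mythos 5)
Your treatment of parts (1) and (2) follows the paper's own route: the conditional score identity makes $\{\partial_{\alpha}\psi(\cdot)\mathbf{1}_{J_{k,1}^{n}}\}_{k}$ a martingale difference array, its $p$-th moment is reduced by a BDG-type bound to the jump moment inequality (Lemma \ref{moment_ineq_jump}), and for (2) the paper performs exactly your fluctuation/bias split, with the information identity linking $\int_{E}\partial_{\alpha}^{2}\psi\, f_{\alpha_{0}}$ to $-I_{3}(\alpha_{0})$. The genuine gap in (2) is your step (a): the claim $\sup_{t}|X_{t}^{\theta,\varepsilon}-x_{t}|=O_{p}(\varepsilon)$ is false in this model, because the compound Poisson part contributes $\varepsilon\sum_{k\le N_{1}^{\lambda}}|c(\cdot)||V_{k}|\asymp\varepsilon\lambda$ (there are $O_{p}(\lambda)$ jumps, each of size $O_{p}(\varepsilon)$, and $\lambda\to\infty$). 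The paper's Lemma \ref{approx_for_conditional_e} and its proof of part (2) give the bias the size $O\big(1/n^{p}+(\varepsilon\lambda)^{p}\big)$, so boundedness of $\lambda^{\eta_{3}}$ times the bias forces $\varepsilon\lambda^{1+\eta_{3}}=O(1)$ --- precisely where the third condition $\varepsilon\lambda^{1+\delta}=o(1)$ of \Aref{a9} enters and constrains $\eta_{3}\le\delta$. With your rate $O_{p}(\varepsilon)$ this constraint silently disappears, so your bookkeeping for the admissible $\eta_{3}$ is not just delicate but actually wrong as stated; the conclusion survives only after replacing $\varepsilon$ by $\varepsilon\lambda$ throughout step (a).

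Part (3) is where your argument would fail as written. You work with the observed contrast $\Psi_{\lambda}^{(2)}$, whose $k$-th summand is $\psi(X_{t_{k-1}^{n}}^{\theta,\varepsilon},\Delta_{k}^{n}X^{\theta,\varepsilon}/\varepsilon,\alpha)\mathbf{1}_{D_{k}^{n}}$, and you bound each summand pointwise by $R(X_{t_{k-1}^{n}}^{\theta,\varepsilon},1)(1+|V_{\tau_{k}}|^{C})$. Assumption \Aref{a8} licenses no such pointwise polynomial bound: it controls only $f_{\alpha_{0}}$-integrals of $\partial_{\bm{\alpha}}^{\bm{k}}\psi$, and those integrals are usable only when the second argument of $\psi$ has conditional law $f_{\alpha_{0}}$ up to the factor $c$. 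On $D_{k}^{n}$ the normalized increment $\Delta_{k}^{n}X^{\theta,\varepsilon}/\varepsilon$ contains Gaussian noise and drift and may correspond to zero or to two-plus jumps, so neither the pointwise nor the integrated bound applies. The missing tilde in the statement is a typo: the paper proves (3) for the \emph{ideal} contrast $\tilde{\Psi}_{\lambda}^{(2)}$, whose second argument is exactly $c(X_{t_{k-1}^{n}}^{\theta,\varepsilon},\alpha_{0})V_{\tau_{k}}$ with $V_{\tau_{k}}\sim f_{\alpha_{0}}$ independent of $\mathcal{F}_{t_{k-1}^{n}}$; then Lemma \ref{moment_ineq_jump} applies with $g=\partial_{\alpha}^{3}\psi$ and $g=\partial_{\alpha}^{4}\psi$ (admissible since \Aref{a8} covers $|\bm{k}|\le 4$), and a Sobolev-type embedding in $\alpha$ --- the same ``integrate the fourth derivative'' device you propose --- moves the supremum over $\alpha$ inside the expectation. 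Your embedding idea is the right one; it must simply be applied to $\tilde{\Psi}_{\lambda}^{(2)}$, where the required moment bounds exist, while the discrepancy between $\Psi_{\lambda}^{(2)}$ and $\tilde{\Psi}_{\lambda}^{(2)}$ is handled separately in Lemma \ref{error_conv_contrast3}, not here.
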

Let us add the symbol needed to state Lemma \ref{approx_for_conditional_e} - \ref{moment_ineq_jump}. Denote $\tilde{\eta}_{k}^{n}(\mu)$ by
\begin{align}
    &\tilde{\eta}_{k}^{n}(\mu):= \bigg(\Delta_{k}^{n}X^{\theta,\varepsilon} - \frac{1}{n}a(X_{t_{k-1}^{n}}^{\theta,\varepsilon},\mu)\bigg)\bm{1}_{J_{k,0}^{n}}, \nonumber
\end{align}
for $k = 1,2,...,n,\quad n \in \mathbb{N}$.
\begin{Lemma}\label{approx_for_conditional_e}
   Suppose \Aref{a1} - \Aref{a3} and \Aref{a9}. Then, the following holds:
    \begin{enumerate}
        \item[(1)]$\displaystyle \mathbb{E}_{\theta_{0}}\bigg[ \Delta_{k}^{n}X^{\theta,\varepsilon} \bm{1}_{J_{k,0}^{n}} \mid \mathcal{F}_{t_{k-1}^{n}}\bigg] 
        = R(X_{t_{k-1}^{n}}^{\theta,\varepsilon},\frac{1}{n})$.
        \item[(2)]$\displaystyle \mathbb{E}_{\theta_{0}}\bigg[
            \tilde{\eta}_{k}^{n}(\mu_{0}) \mid \mathcal{F}_{t_{k-1}^{n}}\bigg] = R(X_{t_{k-1}^{n}}^{\theta,\varepsilon},\frac{1}{n^{2}}) $.
        \item[(3)]$\displaystyle \mathbb{E}_{\theta_{0}}\bigg[\tilde{\eta}_{k}^{n}(\mu_{0})^{2} 
        - \frac{\varepsilon^{2}}{n}b^{2}(X_{t_{k-1}^{n}},\sigma_{0})\bm{1}_{J_{k,0}^{n}}\mid \mathcal{F}_{t_{k-1}^{n}}\bigg] = R(X_{t_{k-1}^{n}}^{\theta,\varepsilon},\frac{\varepsilon^{2}}{n^{3/2}} \lor \frac{\varepsilon^{2}}{n^{2}}) $.
        \item[(4)]$\displaystyle \mathbb{E}_{\theta_{0}}
        \bigg[\sup_{u \in [t_{k-1}^{n},t_{k}^{n}]} 
        \lvert X_{u} - x_{u}\rvert^{p} \mid \mathcal{F}_{t_{k-1}^{n}} \bigg]  = R(X_{t_{k-1}^{n}}^{\theta,\varepsilon},\frac{1}{n^{p}} + \frac{\varepsilon^{p}}{n^{p/2}})$.
        \item[(5)]$\mathbb{E}_{\theta_{0}}\bigg[\sup_{t \in [t_{k-1}^{n},t_{k}^{n}]} \lvert X_{t}^{\theta,\varepsilon} -
        X_{t_{k-1}^{n}}^{\theta,\varepsilon} \rvert^{p} \bm{1}_{J_{k,0}^{n}}\mid \mathcal{F}_{t_{k-1}^{n}} \bigg]
         = R(X_{t_{k-1}^{n}}^{\theta,\varepsilon},\frac{1}{n^{p}} + \frac{\varepsilon^{p}}{n^{p/2}})$.  
    \end{enumerate}
\end{Lemma}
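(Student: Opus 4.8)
The plan is to exploit two structural facts throughout. First, on the event $J_{k,0}^{n}=\{\Delta_{k}^{n}N^{\lambda}=0\}$ the path $(X_{u}^{\theta,\varepsilon})_{u\in[t_{k-1}^{n},t_{k}^{n}]}$ carries no jump, so it coincides with the solution $\tilde{X}$ of the purely continuous equation $d\tilde{X}_{u}=a(\tilde{X}_{u},\mu_{0})\,du+\varepsilon b(\tilde{X}_{u},\sigma_{0})\,dW_{u}$ started at $X_{t_{k-1}^{n}}^{\theta,\varepsilon}$. Second, since $W$, $N^{\lambda}$ and $(V_{k})$ are mutually independent and $N^{\lambda}$ has independent increments, $\bm{1}_{J_{k,0}^{n}}$ is independent of both $\mathcal{F}_{t_{k-1}^{n}}$ and $(\tilde{X}_{u})_{u}$, whence any conditional expectation of a product factorises through $\mathbb{P}(J_{k,0}^{n}\mid\mathcal{F}_{t_{k-1}^{n}})=e^{-\lambda/n}\le 1$, consistently with Lemma \ref{lemma_in_kobayashi}. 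Combined with the one-step moment bound $\mathbb{E}_{\theta_{0}}[\sup_{u\in[t_{k-1}^{n},t_{k}^{n}]}|\tilde{X}_{u}-X_{t_{k-1}^{n}}^{\theta,\varepsilon}|^{p}\mid\mathcal{F}_{t_{k-1}^{n}}]=R(X_{t_{k-1}^{n}}^{\theta,\varepsilon},n^{-p}+\varepsilon^{p}n^{-p/2})$ — obtained from Burkholder--Davis--Gundy applied to the diffusion integral, the linear growth built into $C_{\uparrow}^{6,4}$ from \Aref{a2}, and Gronwall's inequality — this immediately yields (5) and is the workhorse for the remaining items. I would record at the outset the uniform moment bounds $\sup_{k}\mathbb{E}_{\theta_{0}}[\sup_{u}|X_{u}^{\theta,\varepsilon}|^{p}]<\infty$ (from linear growth and \Aref{a4}) so that every polynomial-growth constant produced below is genuinely of $R(\cdot,\cdot)$ form, uniformly in $k,n,\varepsilon,\lambda$.

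For (1) and (2) I would compute conditional means. In (1), on $J_{k,0}^{n}$ we have $\Delta_{k}^{n}X=\int_{t_{k-1}^{n}}^{t_{k}^{n}}a(\tilde{X}_{s},\mu_{0})\,ds+\varepsilon\int_{t_{k-1}^{n}}^{t_{k}^{n}}b(\tilde{X}_{s},\sigma_{0})\,dW_{s}$; the diffusion integral has zero conditional mean and the drift integral is $R(X_{t_{k-1}^{n}}^{\theta,\varepsilon},1/n)$ by linear growth, so after the factor $e^{-\lambda/n}$ the claim follows. For (2) I would subtract $\tfrac1n a(X_{t_{k-1}^{n}}^{\theta,\varepsilon},\mu_{0})$, which is $\mathcal{F}_{t_{k-1}^{n}}$-measurable, and reduce the conditional mean of $\tilde{\eta}_{k}^{n}(\mu_{0})$ to $e^{-\lambda/n}\int_{t_{k-1}^{n}}^{t_{k}^{n}}\mathbb{E}_{\theta_{0}}[a(\tilde{X}_{s},\mu_{0})-a(X_{t_{k-1}^{n}}^{\theta,\varepsilon},\mu_{0})\mid\mathcal{F}_{t_{k-1}^{n}}]\,ds$; applying It\^o's formula to $x\mapsto a(x,\mu_{0})$ (licit by \Aref{a2}) shows the inner expectation is $R(\cdot,s-t_{k-1}^{n})=R(\cdot,1/n)$, and outer integration over an interval of length $1/n$ produces the extra factor, giving $R(X_{t_{k-1}^{n}}^{\theta,\varepsilon},1/n^{2})$.

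Item (3) is where I expect the real work. Writing $\tilde{\Delta}-\tfrac1n a=A+M$ with $A:=\int_{t_{k-1}^{n}}^{t_{k}^{n}}(a(\tilde{X}_{s},\mu_{0})-a(X_{t_{k-1}^{n}}^{\theta,\varepsilon},\mu_{0}))\,ds$ and $M:=\varepsilon\int_{t_{k-1}^{n}}^{t_{k}^{n}}b(\tilde{X}_{s},\sigma_{0})\,dW_{s}$, I would expand $(\tilde{\Delta}-\tfrac1n a)^{2}=A^{2}+2AM+M^{2}$ and treat the three pieces separately. It\^o isometry gives $\mathbb{E}_{\theta_{0}}[M^{2}\mid\mathcal{F}_{t_{k-1}^{n}}]=\varepsilon^{2}\int_{t_{k-1}^{n}}^{t_{k}^{n}}\mathbb{E}_{\theta_{0}}[b^{2}(\tilde{X}_{s},\sigma_{0})\mid\mathcal{F}_{t_{k-1}^{n}}]\,ds$, and an It\^o expansion of $b^{2}$ controls its difference from $\tfrac{\varepsilon^{2}}{n}b^{2}(X_{t_{k-1}^{n}}^{\theta,\varepsilon},\sigma_{0})$ at order $\varepsilon^{2}/n^{2}$; the Lipschitz bound \Aref{a1} together with the one-step moment estimate gives $\mathbb{E}[A^{2}]=O(n^{-4}+\varepsilon^{2}n^{-3})$, and Cauchy--Schwarz bounds the cross term $\mathbb{E}[AM]$. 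The delicate point is that the cross term naively produces a contribution of order $\varepsilon n^{-5/2}$, which I would absorb into the target rate using \Aref{a9}: the condition $1/(n\varepsilon^{2})=O(1)$ forces $\varepsilon\gtrsim n^{-1/2}$, so $\varepsilon n^{-5/2}=O(\varepsilon^{2}n^{-2})$. Collecting the three contributions and multiplying by $e^{-\lambda/n}$ yields $R(X_{t_{k-1}^{n}}^{\theta,\varepsilon},\varepsilon^{2}n^{-3/2}\lor\varepsilon^{2}n^{-2})$.

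Finally, (4) follows from the same toolkit: comparing $X_{u}^{\theta,\varepsilon}$ to the deterministic flow solving $\dot{x}_{u}=a(x_{u},\mu_{0})$ through $X_{t_{k-1}^{n}}^{\theta,\varepsilon}$, I would bound the drift difference by Lipschitz continuity and Gronwall (order $n^{-p}$), the diffusion martingale by Burkholder--Davis--Gundy (order $\varepsilon^{p}n^{-p/2}$), and the jump integral by the factorisation above, since it vanishes on $J_{k,0}^{n}$ and is of strictly smaller order on its complement. The main obstacle is the bookkeeping in (3) — isolating the leading diffusion variance, controlling the cross term, and verifying that \Aref{a9} is exactly what converts the residual $\varepsilon n^{-5/2}$ into the claimed rate — while keeping all growth constants uniform in $k,n,\varepsilon,\lambda$ so that the errors are honestly of $R$-type.
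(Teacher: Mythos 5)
Your parts (1)--(3) and (5) are correct and are essentially the paper's own argument: the paper also works with the jump-free one-step reduction and the factorization through $\mathbb{P}(J_{k,0}^{n}\mid \mathcal{F}_{t_{k-1}^{n}})=e^{-\lambda/n}$, but it outsources the details --- (1) to Lemma 6 of Kessler \cite{8} plus Gronwall, (3) to the decomposition of Kaino--Uchida \cite{4}, and (4)--(5) to Lemmas 4.2 and 4.1 of Kobayashi--Shimizu \cite{9} --- whereas you argue directly. In particular, your bookkeeping in (3), namely $\mathbb{E}_{\theta_{0}}[A^{2}\mid\mathcal{F}_{t_{k-1}^{n}}]=R(\cdot,n^{-4}+\varepsilon^{2}n^{-3})$, the It\^o isometry plus an It\^o expansion of $b^{2}$ producing the $R(\cdot,\varepsilon^{2}n^{-2})$ correction, and conditional Cauchy--Schwarz for the cross term with the \Aref{a9} absorption $\varepsilon n^{-5/2}\lesssim \varepsilon^{2}n^{-2}$ (since $n^{-1/2}\lesssim\varepsilon$), is exactly the mechanism behind the paper's four-term estimate $R(\cdot,n^{-3})+R(\cdot,\varepsilon^{2}n^{-3})+R(\cdot,\varepsilon n^{-5/2})+R(\cdot,\varepsilon^{2}n^{-2})$; your version even gives the slightly stronger rate $\varepsilon^{2}n^{-2}$.

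The genuine gap is in (4), in how you dispose of the jump part. You assert that the jump integral ``is of strictly smaller order'' on the complement of $J_{k,0}^{n}$. It is not: on $\{\Delta_{k}^{n}N^{\lambda}\ge 1\}$, which has conditional probability of order $\lambda/n$, a single jump displaces $X^{\theta,\varepsilon}$ by $\varepsilon\lvert c(\cdot,\alpha_{0})V\rvert$, so its contribution to $\mathbb{E}_{\theta_{0}}[\sup_{u}\lvert X_{u}-x_{u}\rvert^{p}\mid\mathcal{F}_{t_{k-1}^{n}}]$ is of order $\varepsilon^{p}\lambda/n$; having $\varepsilon^{p}\lambda/n\lesssim n^{-p}+\varepsilon^{p}n^{-p/2}$ would force $\lambda\lesssim n^{1-p/2}$, which fails for every $p>2$ because $\lambda\to\infty$. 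So your argument cannot yield (4) for all $p$ unless the indicator $\bm{1}_{J_{k,0}^{n}}$ is inserted, exactly as in (5); that is also the only form in which the estimate is used downstream (e.g.\ in the proof of Lemma \ref{moment_INEQ_diffusion} (2), where the sums carry $\bm{1}_{J_{k,0}^{n}}$), and indeed where the paper needs a jump-inclusive bound (proof of Lemma \ref{moment_est_contrast3}) it settles for the weaker rate $O(n^{-p}+(\varepsilon\lambda)^{p})$. Relatedly, you silently replaced the paper's $x_{u}$ --- the ODE path \eqref{ode} started from $x$ at time $0$ --- by the flow started at $X_{t_{k-1}^{n}}^{\theta,\varepsilon}$; under the paper's literal reading the claim already fails at $u=t_{k-1}^{n}$, since $\lvert X_{t_{k-1}^{n}}^{\theta,\varepsilon}-x_{t_{k-1}^{n}}\rvert$ is $\mathcal{F}_{t_{k-1}^{n}}$-measurable and typically of order $\varepsilon\gg\varepsilon/\sqrt{n}$. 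Your reinterpretation (together with the indicator) is what makes the statement provable and is surely what \cite{9} proves, but it needs to be stated, not passed over as a negligible jump term.
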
  
\begin{Lemma}\label{moment_INEQ_diffusion}
    Let $f$ be a function $\mathbb{R} \times \Theta \to \mathbb{R}$. \, 
       Suppose \ref{a1} - \ref{a3}, \ref{a9} and $f \in C_{\uparrow}^{1,1}(\mathbb{R} \times \Theta;\mathbb{R})$. 
       Then, we have that for any $p \in \mathbb{N}$, 
    \begin{enumerate}
    \item[(1)]$\displaystyle \sup_{n,\varepsilon,\lambda}\mathbb{E}_{\theta_{0}}\bigg[\bigg(\sup_{\theta}\bigg\lvert \frac{1}{n}\sum_{k = 1}^{n}f(X_{t_{k-1}^{n}}^{\theta,\varepsilon},\theta) \bigg\rvert \bigg)^{p}\bigg] < \infty$.
    \item[(2)]$\displaystyle \sup_{n,\varepsilon,\lambda}\mathbb{E}_{\theta_{0}}
    \bigg[\bigg(\sup_{\theta}\varepsilon^{-1}\bigg\lvert \frac{1}{n}\sum_{k = 1}^{n}
    f(X_{t_{k-1}^{n}}^{\theta,\varepsilon},\theta)\bm{1}_{J_{k,0}^{n}} - \int_{0}^{1}f(x_{t},\theta)dt \bigg\rvert \bigg)^{p}\bigg] < \infty$.
    \item[(3)] $\displaystyle \sup_{n,\varepsilon,\lambda}\mathbb{E}_{\theta_{0}}\bigg[\bigg(\sup_{\theta}\varepsilon^{-1}\bigg \lvert \sum_{k = 1}^{n}f(X_{t_{k-1}^{n}}^{\theta,\varepsilon},\theta)\tilde{\eta}_{k}^{n}(\mu_{0})\bigg\rvert\bigg)^{p}\bigg] < \infty$.
    \item[(4)] $\displaystyle \sup_{n,\varepsilon,\lambda}\mathbb{E}_{\theta_{0}}\bigg[\bigg(\sup_{\theta}\bigg\lvert \sum_{k = 1}^{n} f(X_{t_{k-1}^{n}}^{\theta,\varepsilon},\theta)\tilde{\eta}_{k}^{n}(\mu)\bigg\rvert\bigg)^{p}  \bigg] < \infty$\,.
    \item[(5)] $\displaystyle \sup_{n,\varepsilon,\lambda}
    \mathbb{E}_{\theta_{0}}\bigg[\bigg(\sup_{\theta}\sqrt{n}\bigg\lvert \varepsilon^{-2} 
    \sum_{k = 1}^{n}f(X_{t_{k-1}^{n}}^{\theta,\varepsilon},\theta)\big(\tilde{\mu}_{k}^{n}(\mu_{0})^{2} - \frac{\varepsilon^{2}}{n}b^{2}(X_{t_{k-1}^{n}}^{\theta,\varepsilon},\sigma_{0})\big)\bigg\rvert \bigg)^{p} \bigg] < \infty$.
    \item[(6)] $\displaystyle \sup_{n,\varepsilon,\lambda}
    \mathbb{E}_{\theta_{0}}\bigg[\sup_{\theta}\bigg\lvert \varepsilon^{-2}\sum_{k = 1}^{n}f(X_{t_{k-1}^{n}}^{\theta,\varepsilon},\theta)\tilde{\mu}_{k}^{n}(\mu_{0})\bigg\rvert \bigg] < \infty.$
\end{enumerate}
\end{Lemma}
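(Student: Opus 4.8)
Lemma \ref{moment_INEQ_diffusion} is a package of six moment bounds sharing one skeleton, so my plan is to build a single machinery and then only vary the scalings. I rely on two standing facts. First, under \Aref{a1}--\Aref{a4} and \Aref{a9} one has the uniform moment bound $\sup_{n,\varepsilon,\lambda}\sup_{t\in[0,1]}\mathbb{E}_{\theta_{0}}[|X_{t}^{\theta,\varepsilon}|^{p}]<\infty$ for every $p$ (the compound-Poisson contribution stays controlled because $\varepsilon\lambda\to0$). Second, $f\in C_{\uparrow}^{1,1}$ means that $\sup_{\theta}|f(x,\theta)|$ and $\sup_{\theta}|\partial_{\theta}f(x,\theta)|$ are both of the form $R(x,1)$, i.e.\ of polynomial growth with exponent uniform in $\theta$. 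To move the $\sup_{\theta}$ inside the expectation I would invoke a Sobolev inequality on the bounded set $\overline{\Theta}$ (as in Yoshida \cite{17}): for $p>d$,
\begin{align}
\mathbb{E}_{\theta_{0}}\Big[\sup_{\theta}|G(\theta)|^{p}\Big]\lesssim \int_{\overline{\Theta}}\Big(\mathbb{E}_{\theta_{0}}\big[|G(\theta)|^{p}\big]+\mathbb{E}_{\theta_{0}}\big[|\partial_{\theta}G(\theta)|^{p}\big]\Big)\,d\theta, \nonumber
\end{align}
so it suffices to bound $\mathbb{E}_{\theta_{0}}[|G(\theta)|^{p}]$ and $\mathbb{E}_{\theta_{0}}[|\partial_{\theta}G(\theta)|^{p}]$ uniformly in $\theta$. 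Since $\partial_{\theta}$ merely lands on $f$ and preserves the $R(x,1)$-growth, the two are estimated identically and below I track only the first.

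For each fixed $\theta$ I decompose every sum along the filtration $(\mathcal{F}_{t_{k-1}^{n}})$. Writing $g_{k}:=f(X_{t_{k-1}^{n}}^{\theta,\varepsilon},\theta)$ and letting $\zeta_{k}$ stand for the relevant increment ($\mathbf{1}_{J_{k,0}^{n}}$, $\tilde{\eta}_{k}^{n}$, or the centered square), set
\begin{align}
\sum_{k}g_{k}\zeta_{k}=\sum_{k}g_{k}\big(\zeta_{k}-\mathbb{E}_{\theta_{0}}[\zeta_{k}\mid\mathcal{F}_{t_{k-1}^{n}}]\big)+\sum_{k}g_{k}\,\mathbb{E}_{\theta_{0}}[\zeta_{k}\mid\mathcal{F}_{t_{k-1}^{n}}]. \nonumber
\end{align}
The first sum is a discrete martingale, whose $p$-th moment the Burkholder--Davis--Gundy inequality bounds (up to a constant) by $\mathbb{E}_{\theta_{0}}[(\sum_{k}g_{k}^{2}\,\mathbb{E}_{\theta_{0}}[\zeta_{k}^{2}\mid\mathcal{F}_{t_{k-1}^{n}}])^{p/2}]$, which after Jensen reduces to an average of $\mathbb{E}_{\theta_{0}}[|g_{k}|^{p}]$ times the appropriate power of the conditional scale; the second (predictable) sum is bounded termwise by $\sum_{k}|g_{k}|\,|\mathbb{E}_{\theta_{0}}[\zeta_{k}\mid\mathcal{F}_{t_{k-1}^{n}}]|$. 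Lemma \ref{approx_for_conditional_e} supplies exactly the conditional moments needed: for $\zeta_{k}=\tilde{\eta}_{k}^{n}(\mu_{0})$ the mean is $R(\cdot,1/n^{2})$ and the second moment is of order $\varepsilon^{2}/n$, while for the centered square the compensator rate is $R(\cdot,\varepsilon^{2}/n^{3/2}\lor\varepsilon^{2}/n^{2})$. Substituting these and invoking the standing moment bound collapses each estimate to checking that a single monomial in $(n,\varepsilon,\lambda)$ is $O(1)$, which \Aref{a9} guarantees (chiefly $1/(n\varepsilon^{2})=O(1)$ and $\varepsilon\lambda=o(1)$).

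Specialising: (1) follows at once from $(\tfrac1n\sum|g_{k}|)^{p}\le\tfrac1n\sum|g_{k}|^{p}$ and the standing moments. For (2) I split the error into three pieces, namely $\tfrac1n\sum_{k}(f(X_{t_{k-1}^{n}}^{\theta,\varepsilon},\theta)-f(x_{t_{k-1}},\theta))\mathbf{1}_{J_{k,0}^{n}}$, treated by a first-order Taylor expansion in $x$ with the deviation $X_{t_{k-1}^{n}}^{\theta,\varepsilon}-x_{t_{k-1}}$ controlled in $L^{p}$ through Lemma \ref{approx_for_conditional_e}(4)--(5), so that \Aref{a9} keeps the $\varepsilon^{-1}$-scaled term bounded; the jump-contaminated part $\tfrac1n\sum_{k}f(x_{t_{k-1}},\theta)(\mathbf{1}_{J_{k,0}^{n}}-1)$, where $\mathbb{P}((J_{k,0}^{n})^{c}\mid\mathcal{F}_{t_{k-1}^{n}})=O(\lambda/n)$ gives $\varepsilon^{-1}\lambda/n=\varepsilon\lambda\cdot\frac{1}{n\varepsilon^{2}}=o(1)$; and the deterministic Riemann-sum error of size $\varepsilon^{-1}/n=o(1)$. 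Parts (3) and (4) are the linear-increment instance of the skeleton (the martingale part is $O(\varepsilon)$, absorbed by $\varepsilon^{-1}$, and the compensator is $o(1)$), whereas (5) and (6) are the quadratic instance, with (5) carrying the extra $\sqrt{n}$ matched precisely by the sharper rate of Lemma \ref{approx_for_conditional_e}(3) and (6) having its $\varepsilon^{-2}$ absorbed by the $\varepsilon^{2}/n$-scale of the squared increments.

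The main obstacle is part (5). There both the martingale term and the compensator term are individually of the critical order, so the normalisation $\sqrt{n}\,\varepsilon^{-2}$ leaves no slack whatsoever: one must exploit the exact centering $(\tilde{\eta}_{k}^{n}(\mu_{0}))^{2}-\frac{\varepsilon^{2}}{n}b^{2}(X_{t_{k-1}^{n}}^{\theta,\varepsilon},\sigma_{0})$ together with the precise rate $\varepsilon^{2}/n^{3/2}\lor\varepsilon^{2}/n^{2}$ of Lemma \ref{approx_for_conditional_e}(3), since any cruder bound on either piece diverges. More broadly, the delicate feature running through all six bounds is that the aggressive normalisations $\varepsilon^{-1}$, $\varepsilon^{-2}$ and $\sqrt{n}$ are applied in the regime $\lambda\to\infty$, where jump effects could a priori blow up; making each cancellation rigorous is exactly what \Aref{a9} is calibrated for, and it must be threaded carefully at every step. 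The pervasive secondary difficulty is keeping these estimates uniform in $\theta$ while differentiating once in $\theta$ for the Sobolev step, which is precisely why the hypothesis $f\in C_{\uparrow}^{1,1}$, rather than mere joint continuity, is imposed.
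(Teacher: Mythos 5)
Your machinery is essentially the paper's own: the paper also handles $\sup_{\theta}$ by bounding the statistic and its $\theta$-derivative (the Sobolev-embedding device of Yoshida \cite{17}, which is why $C_{\uparrow}^{1,1}$ is assumed), also splits each sum into a martingale part plus a predictable compensator, applies the Burkholder--Davis--Gundy inequality to the former and Lemma \ref{approx_for_conditional_e} to the latter, and closes every estimate with the \Aref{a9} bookkeeping ($1/(n\varepsilon^{2})=O(1)$, $\varepsilon\lambda\to 0$). Your variants are harmless: in (2) you compare $f(X_{t_{k-1}^{n}}^{\theta,\varepsilon},\theta)$ with $f(x_{t_{k-1}},\theta)$ at gridpoints plus a Riemann error, where the paper compares within each interval and then to the flow, and you absorb the jump-count moment $\mathbb{E}[(\varepsilon^{-1}\tfrac1n\sum_{k}\bm{1}_{J_{k,0}^{c}})^{p}]$ into the same martingale/BDG skeleton where the paper runs an explicit induction on $p$; for (5) your "exact centering plus the rate $\varepsilon^{2}/n^{3/2}\lor\varepsilon^{2}/n^{2}$ from Lemma \ref{approx_for_conditional_e}(3)" is precisely the paper's decomposition $M_{k}^{(1)}+M_{k}^{(2)}+R$ borrowed from Kaino and Uchida \cite{4}. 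Items (1)--(5) are therefore sound, granted the paper's Lemma \ref{approx_for_conditional_e}.

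The genuine gap is item (6). You group it with (5) as "the quadratic instance" and claim the $\varepsilon^{-2}$ is "absorbed by the $\varepsilon^{2}/n$-scale of the squared increments", but the summand in (6) is $\tilde{\mu}_{k}^{n}(\mu_{0})$ to the \emph{first} power (the only defined object it can denote is $\tilde{\eta}_{k}^{n}(\mu_{0})$), so there is no squared increment anywhere to do the absorbing. Run your own skeleton on the linear reading: the compensator part is $\varepsilon^{-2}\sum_{k}f_{k}R(\cdot,1/n^{2})=O(1/(n\varepsilon^{2}))=O(1)$, fine, but the martingale part of $\sum_{k}f_{k}\tilde{\eta}_{k}^{n}(\mu_{0})$ has quadratic variation of exact order $\varepsilon^{2}$ (indeed $\varepsilon^{-1}\sum_{k}f_{k}\tilde{\eta}_{k}^{n}(\mu_{0})$ satisfies a CLT with nondegenerate limit variance $\int_{0}^{1}f^{2}(x_{t},\theta)b^{2}(x_{t},\sigma_{0})dt>0$ by \Aref{a3}), so $\varepsilon^{-2}\sum_{k}f_{k}\tilde{\eta}_{k}^{n}(\mu_{0})$ has $L^{1}$-norm of order $\varepsilon^{-1}\to\infty$ and the stated bound cannot follow from this route. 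To be fair, the paper fares no better here: its proof of (6) is "as in (4)", while its displayed proof of (4) already carries the $\varepsilon^{-2}$ scaling and silently presupposes (6), so the two are circular. The resolution is that (6) as printed is almost surely missing a factor $1/n$: everywhere it is actually invoked (e.g.\ the term $\frac{1}{n\varepsilon^{2}}\sum_{k}(a(\cdot,\mu)-a(\cdot,\mu_{0}))\partial_{\sigma}^{3}(b^{-2})\tilde{\eta}_{k}^{n}(\mu_{0})$ in the proof of Lemma \ref{moment_est_contrast1}(3)) only $\frac{1}{n}\varepsilon^{-2}\sum_{k}f_{k}\tilde{\eta}_{k}^{n}(\mu_{0})=O_{L^{p}}(1/(n\varepsilon))$ is needed, and with that extra $1/n$ both your argument and the paper's go through. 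You should either prove that corrected statement or flag the discrepancy; as written, your justification of (6) asserts a cancellation that is not there.
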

\begin{Lemma}\label{moment_ineq_jump}
    Suppose \Aref{a3}, \Aref{a4}, \Aref{a8} and \Aref{a9} and let $g$ be a
     function: $\mathbb{R} \times E \times \overline{\Theta}_{3}$ to $\mathbb{R}$ such that for any $p \in \mathbb{N}$ there exists $C > 0$,
    \begin{align}
        &\,\sup_{\alpha \in \mathbb{R} \times \overline{\Theta}_{3}}\int_{\mathbb{R}} \big\lvert g(x,y,\alpha) \big\rvert^{p}f_{\alpha_{0}}(y)dy = R(x,1), \nonumber
    \end{align}
    and denote 
    \begin{align}
        g_{k-1}\big(c(X_{t_{k-1}^{n}}^{\theta,\varepsilon},\alpha_{0})V_{\tau_{k}},\alpha \big) := &\, g \big(X_{t_{k-1}^{n}}^{\theta,\varepsilon}, 
        c(X_{t_{k-1}^{n}}^{\theta,\varepsilon},\alpha_{0})V_{\tau_{k}},\alpha\big), \nonumber 
    \end{align}
    where $E = \mathbb{R}$ or $E = \mathbb{R}_{+}$. Then, we obtain 
    \begin{align}
        &\mathbb{E}_{\theta_{0}}\bigg[ \bigg\lvert \frac{1}{\lambda}\sum_{k = 1}^{n} g_{k-1}\bigg(c(X_{t_{k-1}^{n}}^{\theta,\varepsilon},\alpha_{0})V_{\tau_{k}},\alpha\bigg)\bigg \rvert^{p}\bigg] < \infty. \label{5.13}
    \end{align}
    \normalsize
\end{Lemma}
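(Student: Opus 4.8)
The plan is to exploit the fact that, although $S_n:=\sum_{k=1}^n \xi_k$ with $\xi_k:=g_{k-1}\big(c(X_{t_{k-1}^n}^{\theta,\varepsilon},\alpha_0)V_{\tau_k},\alpha\big)\mathbf{1}_{J_{k,1}^n}$ is a sum of $n$ terms, only about $\lambda$ of them are nonzero (the one-jump indicator $\mathbf{1}_{J_{k,1}^n}$ being inherited from $\tilde\Psi_\lambda^{(2)}$), and to combine a martingale moment inequality with the empirical-average estimates of Lemma \ref{moment_INEQ_diffusion}. I will in fact prove the stronger uniform bound $\sup_{n,\varepsilon,\lambda}\mathbb{E}_{\theta_0}[|\lambda^{-1}S_n|^p]<\infty$, which contains the stated finiteness; this is also the form needed to derive Lemma \ref{moment_est_contrast3}.

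First I would compute conditional moments. Conditionally on $\mathcal{F}_{t_{k-1}^n}$, the number of jumps in $(t_{k-1}^n,t_k^n]$ is $\mathrm{Poisson}(\lambda/n)$ and, on $J_{k,1}^n$, the single jump size $V_{\tau_k}$ is independent of $\mathcal{F}_{t_{k-1}^n}$ with density $f_{\alpha_0}$, while $X_{t_{k-1}^n}^{\theta,\varepsilon}$ is $\mathcal{F}_{t_{k-1}^n}$-measurable. Hence for every $r\ge 1$,
\begin{align}
\mathbb{E}_{\theta_0}\big[|\xi_k|^r\mid\mathcal{F}_{t_{k-1}^n}\big]=\mathbb{P}(J_{k,1}^n\mid\mathcal{F}_{t_{k-1}^n})\int_{E}\big|g\big(X_{t_{k-1}^n}^{\theta,\varepsilon},c(X_{t_{k-1}^n}^{\theta,\varepsilon},\alpha_0)y,\alpha\big)\big|^r f_{\alpha_0}(y)\,dy.\nonumber
\end{align}
By \Aref{a3} and Remark \ref{r_a_2}, which keep $c(x,\alpha_0)$ and $c(x,\alpha_0)^{-1}$ under control, the moment hypothesis on $g$ (after the change of variables $z=c(x,\alpha_0)y$ if needed) makes the integral $R(X_{t_{k-1}^n}^{\theta,\varepsilon},1)$; combined with $\mathbb{P}(J_{k,1}^n\mid\mathcal{F}_{t_{k-1}^n})=(\lambda/n)e^{-\lambda/n}\le\lambda/n$ (the Poisson bound, also contained in Lemma \ref{lemma_in_kobayashi}) this gives $\mathbb{E}_{\theta_0}[|\xi_k|^r\mid\mathcal{F}_{t_{k-1}^n}]\le(\lambda/n)R(X_{t_{k-1}^n}^{\theta,\varepsilon},1)$.

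Next I would decompose $S_n=M_n+A_n$, where $A_n:=\sum_k\mathbb{E}_{\theta_0}[\xi_k\mid\mathcal{F}_{t_{k-1}^n}]$ is the compensator and $M_n:=\sum_k\big(\xi_k-\mathbb{E}_{\theta_0}[\xi_k\mid\mathcal{F}_{t_{k-1}^n}]\big)$ is a discrete $(\mathcal{F}_{t_k^n})$-martingale. For the compensator, the case $r=1$ gives $\lambda^{-1}|A_n|\le n^{-1}\sum_k R(X_{t_{k-1}^n}^{\theta,\varepsilon},1)$; since $R(x,1)\lesssim(1+|x|)^{C}\lesssim(1+x^2)^{C/2}$, this is dominated by an empirical average of a function of $C_\uparrow^{1,1}$ type, so $\sup_{n,\varepsilon,\lambda}\mathbb{E}_{\theta_0}[|\lambda^{-1}A_n|^p]<\infty$ by Lemma \ref{moment_INEQ_diffusion}(1) (equivalently by the uniform moment bounds for $X^{\theta,\varepsilon}$). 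For the martingale I would apply a Burkholder–Davis–Gundy (Rosenthal-type) inequality for discrete martingales with predictable quadratic variation,
\begin{align}
\mathbb{E}_{\theta_0}\big[|M_n|^p\big]\lesssim\mathbb{E}_{\theta_0}\Big[\big(\textstyle\sum_k\mathbb{E}_{\theta_0}[\xi_k^2\mid\mathcal{F}_{t_{k-1}^n}]\big)^{p/2}\Big]+\sum_k\mathbb{E}_{\theta_0}\big[|\xi_k|^p\big].\nonumber
\end{align}
The case $r=2$ yields $\sum_k\mathbb{E}_{\theta_0}[\xi_k^2\mid\mathcal{F}_{t_{k-1}^n}]\le\lambda\big(n^{-1}\sum_k R(X_{t_{k-1}^n}^{\theta,\varepsilon},1)\big)$, so the first term is $O(\lambda^{p/2})$ in $L^1$ (again by Lemma \ref{moment_INEQ_diffusion}(1)), while the case $r=p$ gives $\sum_k\mathbb{E}_{\theta_0}[|\xi_k|^p]=O(\lambda)$. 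Dividing by $\lambda^p$ leaves $O(\lambda^{-p/2})+O(\lambda^{1-p})$, which is bounded (indeed vanishes) as $\lambda\to\infty$. Combining the compensator and martingale estimates gives the claim.

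The main obstacle is the bookkeeping that makes the $\lambda^{-1}$ normalization work: the decisive point is that the predictable quadratic variation of $M_n$ accumulates to order $\lambda$, so $\langle M\rangle_n^{p/2}=O(\lambda^{p/2})$ exactly balances the $\lambda^{-p}$ from the normalization and strictly beats it for $p\ge 2$, and likewise the single-term contribution $\sum_k\mathbb{E}|\xi_k|^p=O(\lambda)$ is killed by $\lambda^{-p}$. The two supporting technical points are the transfer of the moment hypothesis on $g$ through the $c(x,\alpha_0)$-scaling together with the one-jump conditioning (handled by \Aref{a3} and Remark \ref{r_a_2}), and the uniform integrability of the polynomial-growth remainders $R(X_{t_{k-1}^n}^{\theta,\varepsilon},1)$, which is exactly where Lemma \ref{moment_INEQ_diffusion} and the uniform path moment bounds enter.
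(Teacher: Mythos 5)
Your argument is correct, and it takes a genuinely different route from the paper's. The paper reduces \eqref{5.13} to the dyadic moments $p=2^{l}$ of the indicator-weighted sum and then invokes the iterated centering-and-squaring scheme of Ogihara and Yoshida \cite{11}: defining $\psi^{1}_{k}(S)=S_{k}$ and recursively $\psi^{l}_{k}(S)=\big(\psi^{l-1}_{k}(S)-\mathbb{E}_{\theta_{0}}[\psi^{l-1}_{k}(S)\mid\mathcal{F}_{t_{k-1}^{n}}]\big)^{2}$, it bounds the $2^{l}$-th moment by first moments of these iterated squares and delegates the quantitative estimates to Proposition 3 and Lemmas 6--7 of \cite{11}. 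You instead make the martingale structure explicit once and for all: the decomposition $S_{n}=M_{n}+A_{n}$ into martingale plus compensator, a single Rosenthal-type inequality for $M_{n}$, and a direct computation of conditional moments from $\mathbb{P}(J_{k,1}^{n}\mid\mathcal{F}_{t_{k-1}^{n}})=(\lambda/n)e^{-\lambda/n}\le\lambda/n$ together with the integrability hypothesis on $g$. The probabilistic inputs are identical in both proofs (the one-jump probability of order $\lambda/n$ and the $f_{\alpha_{0}}$-moments of $g$), but your version is self-contained, treats all $p\ge 2$ at once rather than only dyadic powers (the general case following by Jensen in either approach), and exhibits the explicit rates $O(\lambda^{-p/2})+O(\lambda^{1-p})$ for the normalized martingale part, which makes transparent why the $\lambda^{-1}$ normalization balances; the paper's proof is shorter on the page but entirely dependent on the external machinery of \cite{11}. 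Two points that a final write-up should state explicitly, both of which the paper itself glosses over: the hypothesis on $g$ controls $\int\lvert g(x,y,\alpha)\rvert^{p}f_{\alpha_{0}}(y)\,dy$, whereas the conditional expectation produces $\int\lvert g(x,c(x,\alpha_{0})y,\alpha)\rvert^{r}f_{\alpha_{0}}(y)\,dy$, so the scaling by $c(x,\alpha_{0})$ must be absorbed via \Aref{a3} and Remark \ref{r_a_2}, as you indicate; and applying Lemma \ref{moment_INEQ_diffusion} (1) to the empirical averages of $R(\cdot,1)$ requires replacing $(1+\lvert x\rvert)^{C}$ by a $C_{\uparrow}^{1,1}$ dominant such as $(1+x^{2})^{C/2}$, which you also note. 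Finally, like the paper's own proof, you read the sum in \eqref{5.13} as carrying the indicator $\bm{1}_{J_{k,1}^{n}}$ inherited from $\tilde{\Psi}_{\lambda}^{(2)}$; that is indeed the intended statement.
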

\section{Proof}
\subsection{Notation for Proof}
Before the proof of Theorem \ref{thm1} and Lemmas \ref{error_conv_Z_and_tildeZ} - \ref{ideal_AN},
 let us add some symbols. At the begining, for any $t \geq 0$, $\Delta X_{t}^{\theta,\varepsilon}$ is defined as follows:
\begin{align}
\Delta X_{t}^{\theta,\varepsilon}&:= X_{t}^{\theta,\varepsilon} - X_{t-}^{\theta,\varepsilon}. \nonumber
\end{align}
In addition, for any $n \in \mathbb{N}$ and $k = 1,2,...,n$, $\Delta_{k}^{n}W$ and $\tau_{k}$ are defined as follows:
\begin{align}
&\Delta_{k}^{n}W := W_{t_{k}^{n}} - W_{t_{k-1}^{n}},\nonumber\\
&\tau_{k} := \inf\{t \in [t_{k-1}^{n},t_{k}^{n}]\mid \Delta X_{t}^{\theta,\varepsilon} \neq 0 \quad \mathrm{or} \quad t = t_{k}^{n}\}. \nonumber
\end{align}
and we also restate the symbol $\Delta_{k}^{n}N^{\lambda}$ by
\begin{align}
&\Delta_{k}^{n}N^{\lambda} := N_{t_{k}^{n}}^{\lambda} - N_{t_{k-1}^{n}}^{\lambda}, \nonumber
\end{align}
and the sets $J_{k,i}^{n}$ for $i = 0,1,2$ by 
\begin{align}
    &J_{k,0}^{n} := \{\Delta_{k}^{n}N^{\lambda} = 0\},\quad J_{k,1}^{n} := \{\Delta_{k}^{n}N^{\lambda} = 1\},\quad J_{k,2}^{n} := \{\Delta_{k}^{n}N^{\lambda} \geq 2\}. \nonumber
\end{align}
Based on the above symbols, denote 
\begin{align}
&C_{k,i}^{n}:= C_{k}^{n} \cap J_{k,i}^{n},\quad D_{k,i}^{n}:= D_{k}^{n} \cap J_{k,i}^{n}, \nonumber
\end{align}
and for any $\gamma > 0$,
\begin{align}
&\tilde{C}_{k,i}^{n,\gamma} := C_{k,i}^{n} \cap \tilde{I}_{x}^{\gamma},\quad \tilde{D}_{k,i}^{n,\delta} := D_{k,i}^{n} \cap \tilde{I}_{x}^{\delta}. \nonumber
\end{align}
\subsection{Proof of Main Theorem}
\begin{proof}[Proof of Theorem \ref{thm1}]
From Lemma \ref{error_conv_Z_and_tildeZ} and \ref{ideal_AN}, the following stochasitc convergence holds jointly :
\begin{align}
    \begin{pmatrix}
        \int_{V_{\varepsilon}(\mu_{0})}u_{2}\tilde{Z}_{n,\varepsilon,\lambda}^{(0)}[u_{2}] \pi_{2}(\mu_{0} + \varepsilon u_{2})du_{2} \\
        \int_{V_{\varepsilon}(\mu_{0})}\tilde{Z}_{n,\varepsilon,\lambda}^{(0)}[u_{2}] \pi_{2}(\mu_{0} + \varepsilon u_{2})du_{2}
    \end{pmatrix}
    &\ind 
    \begin{pmatrix}
        \mathcal{N}(0,I_{0}(\mu_{0}))\\
        I_{0}(\mu_{0})
    \end{pmatrix}
    ,\nonumber
\end{align}
and
\begin{align}
\begin{pmatrix}
\int_{V_{n}(\sigma_{0})}u_{1}\tilde{Z}_{n,\varepsilon,\lambda}^{(1)}[u_{1}]\pi_{1}(u_{1}/\sqrt{n} + \sigma_{0})du_{1}\\
\int_{V_{n}(\sigma_{0})}\tilde{Z}_{n,\varepsilon,\lambda}^{(1)}[u_{1}]\pi_{1}(u_{1}/\sqrt{n} + \sigma_{0})du_{1} 
\end{pmatrix}
&\ind 
\begin{pmatrix}
\mathcal{N}(0,I_{1}(\sigma_{0}))\\
I_{1}(\sigma_{0})
\end{pmatrix}
,\nonumber\\
\begin{pmatrix}
\int_{V_{\varepsilon}(\mu_{0})}u_{2}Z_{n,\varepsilon,\lambda}^{(2)}[u_{2}]\pi_{2}(\varepsilon u_{2}+ \mu_{0})du_{2}\\
\int_{V_{\varepsilon}(\mu_{0})}Z_{n,\varepsilon,\lambda}^{(2)}[u_{2}]\pi_{2}(\varepsilon u_{2}+ \mu_{0})du_{2}
\end{pmatrix}
&\ind 
\begin{pmatrix}
\mathcal{N}(0,I_{2}(\mu_{0}))\\
I_{2}(\mu_{0})
\end{pmatrix}
, \nonumber\\
\begin{pmatrix}
\int_{V_{\lambda}(\alpha_{0})}u_{3}\tilde{Z}_{n,\varepsilon,\lambda}^{(3)}[u_{3}]\pi(u_{3}/\sqrt{\lambda} + \alpha_{0})du_{3}\\
\int_{V_{\lambda}(\alpha_{0})}\tilde{Z}_{n,\varepsilon,\lambda}^{(3)}[u_{3}]\pi(u_{3}/\sqrt{\lambda} + \alpha_{0})du_{3} 
\end{pmatrix}
&\ind 
\begin{pmatrix}
\mathcal{N}(0,I_{3}(\alpha_{0}))\\
I_{3}(\alpha_{0})
\end{pmatrix}
.
\nonumber
\end{align}
On the otherhand, it is easy to obtain that
\begin{align}
    \varepsilon^{-1}(\widehat{\mu}_{n,\varepsilon}^{(0)} - \mu_{0}) & 
    = \frac{\int_{V_{\varepsilon}(\mu_{0})}u_{2}Z_{n,\varepsilon,\lambda}^{(0)}[u_{2}] 
    \pi_{2}(\varepsilon u_{2} + \mu_{0})du_{2}}
    {\int_{V_{\varepsilon}(\mu_{0})} Z_{n,\varepsilon,\lambda}^{(0)}[u_{2}] 
    \pi_{2}(\varepsilon u_{2} + \mu_{0})du_{2}}, \nonumber
\end{align}
and 
\begin{align}
        \sqrt{n}(\widehat{\sigma}_{n,\varepsilon} - \sigma_{0})& =
    \frac{\int_{V_{n}(\sigma_{0})}u_{1}Z_{n,\varepsilon,\lambda}^{(1)}[u_{1}] \pi_{1}(u_{1}/\sqrt{n} + \sigma_{0})du_{1}}{ \int_{V_{n}(\sigma_{0})} Z_{n,\varepsilon,\lambda}^{(1)}[u_{1}] \pi_{1}(u_{1}/\sqrt{n} + \sigma_{0})du_{1}}, \nonumber\\
    \varepsilon^{-1}(\widehat{\mu}_{n,\varepsilon} - \mu_{0}) &= \frac{\int_{V_{\varepsilon}(\mu_{0})}u_{2}Z_{n,\varepsilon,\lambda}^{(2)}[u_{2}] \pi_{2}(\varepsilon u_{2} + \mu_{0})du_{2}}{\int_{V_{\varepsilon}(\mu_{0})} Z_{n,\varepsilon,\lambda}^{(2)}[u_{2}] \pi_{2}(\varepsilon u_{2} + \mu_{0})du_{2}}, \nonumber\\
    \sqrt{\lambda}(\widehat{\alpha}_{\lambda} - \alpha_{0}) & = \frac{\int_{V_{\varepsilon}(\alpha_{0})}u_{3}Z_{n,\varepsilon,\lambda}^{(3)}[u_{3}] \pi_{3}( u_{3}/\sqrt{\lambda} + \alpha_{0})du_{3}}{
    \int_{V_{\lambda}(\alpha_{0})} Z_{n,\varepsilon,\lambda}^{(3)}[u_{3}] \pi_{3}( u_{3}/\sqrt{\lambda}+ \alpha_{0})du_{3}}. \nonumber
    \end{align}
Using continuous mapping theorem, we obtain the desired result.
\end{proof}
\begin{proof}[proof of Lemma \ref{error_conv_Z_and_tildeZ}]
First of all, we show \eqref{error_conv_z0_and_tildez0}. We easily obtain that
   \begin{align}
     &\begin{pmatrix}
     \int_{V_{n}(\sigma_{0})}u_{1}Z_{n,\varepsilon,\lambda}^{(1)}[u_{1}]\pi_{1}(u_{1}/\sqrt{n} + \sigma_{0})du_{1} \\
     \int_{V_{n}(\sigma_{0})}Z_{n,\varepsilon,\lambda}^{(1)}[u_{1}]\pi_{1}(u_{1}/\sqrt{n} + \sigma_{0})du_{1} 
     \end{pmatrix}
     - 
     \begin{pmatrix}
     \int_{V_{n}(\sigma_{0})}u_{1}\tilde{Z}_{n,\varepsilon,\lambda}^{(1)}[u_{1}]\pi_{1}(u_{1}/\sqrt{n} + \sigma_{0})du_{1}\\
     \int_{V_{n}(\sigma_{0})}\tilde{Z}_{n,\varepsilon,\lambda}^{(1)}[u_{1}]\pi_{1}(u_{1}/\sqrt{n} + \sigma_{0})du_{1}
     \end{pmatrix}
      \nonumber\\
     =&
     \begin{pmatrix}
        \int_{V_{n}(\sigma_{0})}u_{1}Z_{n,\varepsilon,\lambda}^{(1)}[u_{1}]\pi_{1}(u_{1}/\sqrt{n} + \sigma_{0})du_{1} \\
        \int_{V_{n}(\sigma_{0})}Z_{n,\varepsilon,\lambda}^{(1)}[u_{1}]\pi_{1}(u_{1}/\sqrt{n} + \sigma_{0})du_{1} 
        \end{pmatrix}
        - 
        \begin{pmatrix}
        \int_{V_{n}(\sigma_{0})}u_{1}\overline{Z}_{n,\varepsilon,\lambda}^{(1)}[u_{1}]\pi_{1}(u_{1}/\sqrt{n} + \sigma_{0})du_{1}\\
        \int_{V_{n}(\sigma_{0})}\overline{Z}_{n,\varepsilon,\lambda}^{(1)}[u_{1}]\pi_{1}(u_{1}/\sqrt{n} + \sigma_{0})du_{1}
        \end{pmatrix}
        \nonumber\\
     + &
     \begin{pmatrix}
        \int_{V_{n}(\sigma_{0})}u_{1}\overline{Z}_{n,\varepsilon,\lambda}^{(1)}[u_{1}]\pi_{1}(u_{1}/\sqrt{n} + \sigma_{0})du_{1} \\
        \int_{V_{n}(\sigma_{0})}\overline{Z}_{n,\varepsilon,\lambda}^{(1)}[u_{1}]\pi_{1}(u_{1}/\sqrt{n} + \sigma_{0})du_{1} 
        \end{pmatrix}
        - 
        \begin{pmatrix}
        \int_{V_{n}(\sigma_{0})}u_{1}\tilde{Z}_{n,\varepsilon,\lambda}^{(1)}[u_{1}]\pi_{1}(u_{1}/\sqrt{n} + \sigma_{0})du_{1}\\
        \int_{V_{n}(\sigma_{0})}\tilde{Z}_{n,\varepsilon,\lambda}^{(1)}[u_{1}]\pi_{1}(u_{1}/\sqrt{n} + \sigma_{0})du_{1}
        \end{pmatrix}
        \nonumber\\
     =:&\,A_{n,1} + A_{n,2}, \nonumber
   \end{align}
   where 
   \begin{align}
     \overline{Z}_{n,\varepsilon,\lambda}^{(1)}[u_{1}] &\,= \exp\{\Psi_{n,\varepsilon,\lambda}(\sigma_{0} + u_{1}/\sqrt{n},\tilde{\mu}_{n,\varepsilon},\alpha) - \Psi_{n,\varepsilon,\lambda}(\sigma_{0},\tilde{\mu}_{n,\varepsilon},\alpha)\}. \nonumber
   \end{align}
   On the other hand, for any $n \in \mathbb{N}, \varepsilon > 0, \lambda > 0$, we have that 
\begin{align}
&\bigg\{\sum_{k = 1}^{n}\bm{1}_{C_{i,0}^{n}} < 1/2 \bigg\} \cap \bigg\{\sum_{k = 1}^{n}\bm{1}_{C_{i,2}^{n}} < 1/2 \bigg\} \subset \bigg\{\Psi_{n,\varepsilon}^{(0)}(\mu) = \tilde{\Psi}_{n,\varepsilon}^{(0)}(\mu),(\mu \in \overline{\Theta}_{2})\bigg\}, \nonumber\\
&\bigg\{\Psi_{n,\varepsilon}^{(0)}(\mu) = \tilde{\Psi}_{n,\varepsilon}^{(0)}(\mu),(\mu \in \overline{\Theta}_{2})\bigg\} \subset \bigg\{ \lvert \widehat{\mu}_{n,\varepsilon} - \tilde{\mu}_{n,\varepsilon} \rvert = 0 \bigg\} \subset \bigg\{ A_{n,1} = 0 \bigg\},\nonumber
\end{align}
and
\begin{align}
&\bigg\{\sum_{k = 1}^{n}\bm{1}_{C_{i,0}^{n}} < 1/2 \bigg\} \cap \bigg\{\sum_{k = 1}^{n}\bm{1}_{C_{i,2}^{n}} < 1/2 \bigg\} \subset \bigg\{\Psi_{n,\varepsilon}^{(1)}(\sigma,\mu) 
 = \tilde{\Psi}_{n,\varepsilon}^{(1)}(\sigma,\mu), (\sigma,\mu) \in \overline{\Theta}_{1} \times \overline{\Theta}_{2} \bigg\}, \nonumber\\
&\bigg\{\Psi_{n,\varepsilon}^{(1)}(\sigma,\mu) = \tilde{\Psi}_{n,\varepsilon}^{(1)}(\sigma,\mu), (\sigma,\mu) \in \overline{\Theta}_{1} \times \overline{\Theta}_{2} \bigg\} \subset \bigg\{A_{n,2} = 0\bigg\}. \nonumber
\end{align}
Thus, under \Aref{a9}, as $n \to \infty, \varepsilon \to 0, \lambda \to \infty,
\lambda \int_{\lvert z \rvert \leq 4v_{2}/c_{1}n^{\rho}}f_{\alpha_{0}}(z)dz \to 0$,
we obtain $A_{n,1} = A_{n,2} = o_{p}(1)$, which leads to \eqref{error_conv_z1_and_tildez1}. 
In the same way, we have \eqref{error_conv_z0_and_tildez0} and \eqref{error_conv_z3_and_tildez3}.
Now, we prove \eqref{error_conv_z3_and_tildez3}.
Let us start considering that  
\begin{align}
    &\int_{V_{\lambda}(\alpha_{0})}u_{3} 
    \mathfrak{Z}_{\lambda}^{(3)}[u_{3}]
    \pi(u_{3}/\sqrt{\lambda} + \alpha_{0})du_{3} = o_{p}(1), \nonumber
\end{align}
as $n \to \infty, \varepsilon \to 0, \lambda \to \infty, \lambda 
\int_{\lvert z \rvert \leq 4v_{2}/c_{1}n^{\rho}}f_{\alpha_{0}}(z)dz \to 0$.
We observe that
\begin{align}
    &\int_{V_{\lambda}(\alpha_{0})}u_{3} 
    \mathfrak{Z}_{\lambda}^{(3)}[u_{3}]
    \pi(u_{3}/\sqrt{\lambda} + \alpha_{0})du_{3} \nonumber\\
    =& \int_{V_{\lambda}(\alpha_{0})}u_{3} 
    \mathfrak{Z}_{\lambda}^{(3)}[u_{3}]
    \pi(u_{3}/\sqrt{\lambda} + \alpha_{0})du_{3}\bm{1}_{\{\lvert \chi[u_{3}] \rvert \leq 1\}} + \int_{V_{\lambda}(\alpha_{0})}u_{3} 
    \mathfrak{Z}_{\lambda}^{(3)}[u_{3}]
    \pi(u_{3}/\sqrt{\lambda} + \alpha_{0})du_{3}\bm{1}_{\{\lvert \chi[u_{3}] \rvert > 1\}}, \nonumber 
\end{align}
where 
\begin{align}
 \chi_{n,\varepsilon,\lambda}[u_{3}]&\,:= 
 \bigg(\Psi_{\lambda}^{(2)}(u_{3}/\sqrt{\lambda} + \alpha_{0})
 - \Psi_{\lambda}^{(2)}(\alpha_{0})\bigg)-  \bigg(\tilde{\Psi}_{\lambda}^{(2)}(u_{3}/\sqrt{\lambda} + \alpha_{0}) - \tilde{\Psi}_{\lambda}^{(2)}(\alpha_{0})\bigg). \nonumber 
   \end{align} 
 From Lemma \ref{error_conv_contrast3}, the second term goes to zero. Let us consider the first term. 
We easily get that 
        \begin{align}
         \int_{V_{\lambda}(\alpha_{0})}u_{3}\mathfrak{Z}_{\lambda}^{(3)}[u_{3}]\pi(u_{3})du_{3} &=
          \int_{V_{\lambda}(\alpha_{0})}u_{3}\tilde{Z}_{\lambda}^{(3)}[u_{3}]
         \bigg(\exp\{\chi_{n,\varepsilon,\lambda}[u_{3}]\} - 1\bigg)\pi_{3}(u_{3}/\sqrt{\lambda} + \alpha_{0})du_{3}, \nonumber
         \end{align}
      Since $u_{3}/\sqrt{\lambda} + \alpha_{0} \in \overline{\Theta}_{3}$, we have that 
        \begin{align}
         &\,\sup_{u_{3} \in V_{\lambda}(\alpha_{0})}\lvert \chi_{n,\varepsilon,\lambda}[u_{3}]\rvert \leq 2\sup_{\alpha \in \overline{\Theta}_{3}} \lvert  \Psi_{\lambda}^{(2)}(\alpha) -  \tilde{\Psi}_{\lambda}^{(2)}(\alpha) \rvert, \nonumber 
        \end{align}
        for any $u_{3} \in V_{\lambda}(\alpha_{0})$, then we obtain 
        \begin{align}
         &\bigg\lvert \int_{V_{\lambda}(\alpha_{0})}u_{3} 
    \mathfrak{Z}_{\lambda}^{(3)}[u_{3}]
    \pi(u_{3}/\sqrt{\lambda} + \alpha_{0})du_{3}
         \bm{1}_{\{\lvert\chi_{n,\varepsilon,\lambda}[u_{3}]\rvert < 1\}} \bigg\rvert \nonumber\\
         \leq\,&\,\int_{\overline{\Theta}_{3}}u_{3}\tilde{Z}_{n,\varepsilon,\lambda}^{(3)}[u_{3}]\bigg(\bigg \rvert\exp \bigg\{\chi_{n,\varepsilon,\lambda}[u_{3}] \bigg\} - 1 \bigg\lvert\bigg) \pi(\alpha_{0} + u_{3}/\sqrt{\lambda})du_{3}\bm{1}_{\{\lvert\chi_{n,\varepsilon,\lambda}[u_{3}]\rvert < 1\}}  \nonumber \\
         \leq\,&\,2\bigg(\sup_{\alpha \in \overline{\Theta}_{3}}\lvert \Psi_{\lambda}^{(2)}(\alpha) - 
         \tilde{\Psi}_{\lambda}^{(2)}(\alpha)\rvert\bigg) \cdot \bigg(\int_{V_{\lambda}(\alpha_{0})}\lvert u_{3} \rvert Z_{n,\varepsilon,\lambda}^{(3)}[u_{3}]\pi(u_{3})du_{3}\bigg). \nonumber
        \end{align}
     Combining Lemma \ref{error_conv_contrast3}, \ref{moment_est_contrast3} and continuous mapping theorem, 
     \begin{align}
        &\int_{V_{\lambda}(\alpha_{0})}u_{3}\mathfrak{Z}_{\lambda}^{(3)}[u_{3}]
             \pi(u_{3}/\sqrt{\lambda} + \alpha_{0})du_{3} 
              \bm{1}_{\{\lvert\chi_{n,\varepsilon,\lambda}[u_{3}]\rvert < 1\}} \inp 0. \nonumber
        \end{align}   
        as $n \to \infty, \varepsilon \to 0, \lambda \to \infty, \lambda 
\int_{\lvert z \rvert \leq 4v_{2}/c_{1}n^{\rho}}f_{\alpha_{0}}(z)dz \to 0$.
In the same way, we obtain that 
\begin{align}
    &\int_{V_{\lambda}(\alpha_{0})} 
    \mathfrak{Z}_{\lambda}^{(3)}[u_{3}]
    \pi(u_{3}/\sqrt{\lambda} + \alpha_{0})du_{3} \inp 0, \nonumber
\end{align}
as $n \to \infty, \varepsilon \to 0, \lambda \to \infty, \lambda \int_{\lvert z \rvert \leq 4v_{2}/c_{1}n^{\rho}}f_{\alpha_{0}}(z)dz \to 0$.  
Thus, we obtain \eqref{error_conv_z3_and_tildez3}.
 \end{proof}
\begin{proof}[Proof of Lemma \ref{error_conv_contrast3}]
    We prove Lemma \ref{error_conv_contrast3} imitating the proof of Lemma 4.12 in Kobayashi and Shimizu \cite{9}.
    Suppose \ref{a1}, \ref{a3}, \ref{a4}, \ref{a6}, \ref{a8}, \ref{a9}. We easily have that
    \begin{align}
        &\,\sup_{\alpha\in \overline{\Theta}_{3}}\lvert \Psi_{\lambda}^{(2)}(\alpha) 
        - \tilde{\Psi}_{\lambda}^{(2)}(\alpha) \rvert\nonumber\\
        \leq &\,\sup_{\alpha\in \overline{\Theta}_{3}} \bigg\lvert
         \sum_{k = 1}^{n}\psi(X_{t_{k-1}^{n}}^{\theta,\varepsilon},
         \Delta_{k}^{n}X^{\theta,\varepsilon}/\varepsilon,\alpha)\bm{1}_{D_{k}^{n}} 
         - \sum_{k =1}^{n}\psi(X_{t_{k-1}^{n}}^{\theta,\varepsilon},
         \Delta_{k}^{n}X^{\theta,\varepsilon}/\varepsilon,\alpha)\bm{1}_{\tilde{D}_{k,1}^{n,\gamma}} \bigg\rvert \nonumber\\
        +\,&\,\sup_{\alpha\in \overline{\Theta}_{3}} \bigg\lvert 
        \sum_{k =1}^{n}\psi(X_{t_{k-1}^{n}}^{\theta,\varepsilon},
        \Delta_{k}^{n}X^{\theta,\varepsilon}/\varepsilon,\alpha)
        \bm{1}_{\tilde{D}_{k,1}^{n,\gamma}} - \sum_{k =1}^{n}
        \psi(X_{t_{k-1}^{n}}^{\theta,\varepsilon},c(X_{t_{k-1}^{n}}^{\theta,\varepsilon},\alpha_{0})V_{\tau_{k}},\alpha)
        \bm{1}_{\tilde{D}_{k,1}^{n,\gamma}} \bigg\rvert \nonumber\\
       +\,&\,\sup_{\alpha\in \overline{\Theta}_{3}} 
       \bigg\lvert \sum_{k =1}^{n}
       \psi(X_{t_{k-1}^{n}}^{\theta,\varepsilon},c(X_{t_{k-1}^{n}}^{\theta,\varepsilon},\alpha_{0})V_{\tau_{k}},\alpha)
       \bm{1}_{\tilde{D}_{k,1}^{n,\gamma}} - \sum_{k =1}^{n}\psi(X_{t_{k-1}^{n}}^{\theta,\varepsilon},c(X_{t_{k-1}^{n}}^{\theta,\varepsilon},\alpha_{0})V_{\tau_{k}},\alpha)\bm{1}_{J_{k,1}^{n}} \bigg\rvert \nonumber\\
        =:&\, B_{n,1} + B_{n,2} + B_{n,3}, \nonumber
    \end{align}
    where $\gamma > 0$ is a constant in \Aref{a6}
    By Remark 4.1 and Lemma 4.12 in Kobayashi and Shimizu \cite{9}, we obtain that
    \begin{align}
        &\,B_{n,1} = B_{n,3} = o_{p}(1), \nonumber
    \end{align}
as $n \to \infty, \varepsilon \to 0, \lambda \to \infty$ under \Aref{a1}, \Aref{a3}, \Aref{a4}, \Aref{a9}. 
It remains to prove that $B_{n,2} = O_{p}(r_{n,\varepsilon})$ 
where $r_{n,\varepsilon} := \displaystyle \frac{1}{\varepsilon^{p}n^{p}} + \frac{1}{n^{p/2}} $ for any $p \in \mathbb{N}$.
We have that for any $M > 0$,
    \begin{align}
        &\,\mathbb{P}\bigg(\sup_{\alpha}\bigg\lvert \sum_{k = 1}^{n}
        \bigg\{g_{k}\bigg(\frac{\Delta_{k}^{n}X^{\theta,\varepsilon}}{\varepsilon},\alpha\bigg) 
        - g_{k}\bigg(\frac{\Delta X_{\tau_{k}}}{\varepsilon},\alpha\bigg)\bigg\}\bm{1}_{\tilde{D}_{k,1}^{n,\gamma}} \bigg\rvert > Mr_{n,\varepsilon}\bigg)\nonumber\\
        \leq\,&\,\mathbb{P}\bigg(\sup_{1 \leq k \leq n}\lvert Y_{k}^{\varepsilon}\lambda \rvert > 1\bigg) + \mathbb{P}\bigg(\sup_{\alpha \in \overline{\Theta}_{3}} \bigg\lvert \sum_{k = 1}^{n}\int_{0}^{1}
        \frac{\partial g_{k}}{\partial y}
        \bigg(\frac{\Delta X_{\tau_{k}}}{\varepsilon} + \eta Y_{k}^{\varepsilon},\alpha \bigg)d\eta Y_{k}^{\varepsilon}
        \bm{1}_{\{\lvert Y_{k}^{\varepsilon} \rvert \lambda \leq 1\}\cap \tilde{D}_{k,1}^{n,\gamma}\}}\bigg \rvert > Mr_{n,\varepsilon}\bigg). \nonumber
    \end{align}
From Lemma 4.6 in Kobayashi and Shimizu \cite{9} and \Aref{a9}, the first term converges to zero. On the second term, we have that
    \begin{align}
        &\,\sup_{\alpha \in \overline{\Theta}_{3}} \bigg\lvert \sum_{k = 1}^{n}
        \int_{0}^{1}\frac{\partial g_{k}}{\partial y}
        \bigg(\frac{\Delta X_{\tau_{k}}}{\varepsilon} + \eta Y_{k}^{\varepsilon},\alpha \bigg)d\eta 
        \bm{1}_{\{\{\lvert Y_{k}^{\varepsilon} \rvert \lambda \leq 1\}\cap \tilde{D}_{k,1}^{n,\gamma}\}}Y_{k}^{\varepsilon}\bigg \rvert \nonumber\\
        \leq &\,\sup_{\alpha \in \overline{\Theta}_{3}}
         \bigg\lvert \frac{1}{\lambda}\sum_{k = 1}^{n}
         \int_{0}^{1}\frac{\partial g_{k}}{\partial y}
         \bigg(\frac{\Delta X_{\tau_{k}}}{\varepsilon} 
         + \eta Y_{k}^{\varepsilon},\alpha \bigg)d\eta 
         \bm{1}_{\{\{\lvert Y_{k}^{\varepsilon} \rvert \lambda\leq 1\}\cap \tilde{D}_{k,1}^{n,\gamma}\}}\lambda Y_{k}^{\varepsilon}\bigg \rvert \nonumber\\
         \leq&\,\sup_{\alpha \in \overline{\Theta}_{3}}
         \bigg \lvert \frac{1}{\lambda}\int_{0}^{1} \frac{\partial g_{k}}{\partial y}
         \bigg(\frac{\Delta X_{\tau_{k}}}{\varepsilon} + tY_{k}^{\varepsilon},\theta\bigg) d\eta
          \bm{1}_{\{
          \{\lvert Y_{k} ^{\varepsilon} \rvert \lambda\leq 1\} \cap \tilde{D}_{k,1}^{n,\gamma} \}} \bigg \rvert.   \nonumber
        \end{align}
        Suppose \Aref{a6} and put $r_{n,\varepsilon} := \frac{1}{\varepsilon^{p}n^{p}} + \frac{1}{n^{p/2}}$ where $p \in \mathbb{N}$ satisfies that $\frac{1}{p} < \delta$ with $\delta$ 
        in \Aref{a9}. 
        As in the proof of Lemma 4.12 in Kobayashi and Shimizu \cite{9}, we have that
        \begin{align}
            &\,\sup_{\alpha \in \overline{\Theta}_{3}}
             \bigg \lvert \frac{1}{\lambda}\int_{0}^{1} \frac{\partial g_{k}}{\partial y}
             \bigg(\frac{\Delta X_{\tau_{k}}}{\varepsilon} + tY_{k}^{\varepsilon},\theta\bigg) d\eta
              \bm{1}_{\{\{\lvert Y_{k} ^{\varepsilon} \rvert \lambda \leq 1\} \cap \tilde{D}_{k,1}^{n,\gamma}\}} 
              \bigg \rvert = O_{p}(r_{n,\varepsilon}), \label{main_term_of_error_between_z3_and_tildez3}
        \end{align}
 which implies the desired result.
        If you suppose \Aref{a7} in place of \Aref{a6}, put $r_{n,\varepsilon}^{'} := \frac{1}{n^{1 - 1/p - q\rho}} + \frac{1}{n^{1/2 - 1/p - 1/\rho}}$ where $p \in \mathbb{N}$ satisfies $1/p < \delta$ with $\delta > 0$ in 
        \Aref{a9} and $q > 0$ in \Aref{a7} and $\rho \in (0,\frac{1}{4q})$. Then, we obtain the result by substituting $r_{n,\varepsilon}^{'}$ for $r_{n,\varepsilon}$ in \eqref{main_term_of_error_between_z3_and_tildez3}.
\end{proof}
\begin{proof}[Proof of Lemma \ref{ideal_AN}]
    Let us begin with showing that the convergence of each of \eqref{AN_ideal_initial}, \eqref{AN_ideal_adaptive_sigma} - \eqref{AN_ideal_adaptive_alpha},
     and \eqref{moment_conv_ideal_initial} and \eqref{moment_conv_adaptive_sigma} - \eqref{moment_conv_adaptive_alpha}. For \eqref{AN_ideal_initial}, in view of Theorem 10 (a) in Yoshida \cite{17}, it is sufficient to show the following:
\begin{itemize}
    \item For any $R > 0$
    and for any $u \in \mathbb{R}^{d_{2}}$ with $\lvert u \rvert \leq R$,
\begin{align}
    & \log \tilde{Z}_{n,\varepsilon,\lambda}^{0}[u_{0}] \ind \Delta_{0}[u_{0}] + \frac{1}{2}I_{0}[u_{0}^{\otimes 2}],\quad \Delta_{0} \sim \mathcal{N}(0,I_{0}(\mu_{0})). \label{dist_conv_contrast_z0} 
\end{align}
\item For any $L > 0$ there exists a constant $C_{L} > 0$ and for any $r > 0$, 
\begin{align}
    &\,\sup_{n,\varepsilon,\lambda}\mathbb{P}\bigg(\sup_{u_{2} \in V_{\varepsilon}(\mu_{0})}\tilde{Z}_{n,\varepsilon,\lambda}^{(0)}(\mu_{0}) \geq \exp\{-r/2\} \bigg) \leq \frac{C_{L}}{r^{L}}.\label{large_devi_ineq_z0}
\end{align}
\end{itemize}
For \eqref{dist_conv_contrast_z0}, by applying Taylor expansion and Lemma \ref{moment_est_contrast0}, we obtain that
    \begin{align}
        \log \tilde{Z}_{n,\varepsilon,\lambda}^{0}[u_{2}] = &\, \varepsilon \partial_{\mu}\tilde{\Psi}_{n,\varepsilon}^{(0)}(\mu_{0})[u] + \frac{1}{2}\varepsilon^{2}\partial_{\mu}^{2}\tilde{\Psi}_{n,\varepsilon}^{(0)}(\mu_{0})[(u_{2})^{\otimes 2}] \nonumber\\
        &\,\quad +  \int_{t = 0}^{t = 1} \frac{(1-t)^{2}}{2} \partial_{\mu}^{3}\tilde{\Psi}_{n,\varepsilon}^{(0)}(\mu_{0} + \varepsilon t) \bigg[\bigg(\varepsilon u_{2}\bigg)^{\otimes 3} \bigg] \nonumber\\
        = &\, \varepsilon \partial_{\mu}\tilde{\Psi}_{n,\varepsilon}^{(0)}(\mu_{0})[u]
         - \frac{1}{2}I_{0}(\mu_{0})[u_{2}^{\otimes 2}] + o_{p}(1). \nonumber
    \end{align}
Thus, we obtain \eqref{dist_conv_contrast_z0} applying Martingale central limit theorem.
Now, we show \eqref{large_devi_ineq_z0}.
In the view of theorem 3 in Yoshida \cite{17}, we obtain it combining 
\Aref{a5} and the result of Lemma \eqref{moment_est_contrast0},
which follow from \Aref{a1} - \Aref{a3}. Thus, we obtain \eqref{AN_ideal_initial}.
For \eqref{moment_conv_ideal_initial}, in view of Theorem 10 (b) in \cite{17} Yoshida, it suffices to show that for any $p \in \mathbb{N}$,
\begin{align}
    &\,\sup_{n,\varepsilon,\lambda}\mathbb{E}_{\theta_{0}}
    \bigg[\bigg \lvert\bigg(\int_{V_{\varepsilon}(\mu_{0})} Z_{n,\varepsilon}^{(0)}[u_{2}]\pi_{2}(\varepsilon u_{2} + \mu_{0})du_{2} \bigg)^{-1} \bigg \rvert^{p} \bigg] < \infty. \nonumber
\end{align}
The above inequality is obtained by combining the complement \eqref{moment_est_contrast0} and the Lemma 2 in Yoshida \cite{17}.
 We show \eqref{AN_ideal_adaptive_sigma} and \eqref{moment_conv_adaptive_sigma}. For \eqref{AN_ideal_adaptive_sigma},
 as in the proof of \eqref{AN_ideal_initial}, it suffices to show the following:
\begin{itemize}
    \item For any $R > 0$ and for any $u_{1} \in \mathbb{R}^{d_{1}}$ with $\lvert u_{1} \rvert \leq R$,
\begin{align}
    & \log \tilde{Z}_{n,\varepsilon,\lambda}^{0}[u_{1}] \ind \Delta_{1}[u_{1}] + \frac{1}{2}I_{1}[u_{1}^{\otimes 2}],\quad \Delta_{1} \sim \mathcal{N}(0,I(\sigma_{0})). \label{dist_conv_contrast_z1} 
\end{align}
\item For any $L > 0$, there exists a constant $C_{L} > 0$ such that for any $r > 0$, 
\begin{align}
    &\,\sup_{n,\varepsilon,\lambda}\mathbb{P}\bigg(\sup_{u_{1} \in V_{n}(\sigma_{0})}\tilde{Z}_{n,\varepsilon,\lambda}^{(1)}(\sigma_{0}) \geq \exp\{-r/2\} \bigg) \leq \frac{C_{L}}{r^{L}}.\label{large_devi_ineq_z1}
\end{align}
\end{itemize}
We obtain \eqref{dist_conv_contrast_z1} as in the proof \eqref{dist_conv_contrast_z0}.
As for \eqref{large_devi_ineq_z1},it follows from \Aref{a1} - \Aref{a3}, \Aref{a9},
 and \eqref{moment_conv_ideal_initial},
which imply the result of Lemma \eqref{moment_est_contrast0}.
As for \eqref{dist_conv_contrast_z1}, we obtain it as in the proof of \eqref{dist_conv_contrast_z0}.
Consequently, \eqref{AN_ideal_adaptive_sigma} and \eqref{moment_conv_adaptive_sigma} are established.

We show \eqref{AN_ideal_adaptive_mu}, \eqref{moment_conv_adaptive_mu} and \eqref{AN_ideal_adaptive_alpha},
\eqref{moment_conv_adaptive_alpha}.
As in the proof of \eqref{dist_conv_contrast_z0} - \eqref{large_devi_ineq_z1}, we obtain that
for any $R > 0$ and for any $u_{2} \in \mathbb{R}^{d_{2}}$ with  $\lvert u_{2} \rvert \leq R$:
    \begin{align}
        \log \tilde{Z}_{n,\varepsilon,\lambda}^{0}[u_{2}] = &\, \varepsilon \partial_{\mu}\tilde{\Psi}_{n,\varepsilon}^{(0)}(\mu_{0})[u] + \frac{1}{2}\varepsilon^{2}\partial_{\mu}^{2}\tilde{\Psi}_{n,\varepsilon}^{(0)}(\mu_{0})[(u_{2})^{\otimes 2}] \nonumber\\
        &\,\quad +  \int_{t = 0}^{t = 1} \frac{(1-t)^{2}}{2} \partial_{\mu}^{3}\tilde{\Psi}_{n,\varepsilon}^{(0)}(\mu_{0} + \varepsilon t) \bigg[\bigg(\varepsilon u_{2}\bigg)^{\otimes 3} \bigg] \nonumber\\
        = &\, \varepsilon \partial_{\mu}\tilde{\Psi}_{n,\varepsilon}^{(0)}(\mu_{0})[u] - \frac{1}{2}I_{0}(\mu_{0})[u_{2}^{\otimes 2}] + o_{p}(1) \nonumber\\
        \ind &\, \mathcal{N}(0,I_{0}(\mu_{0}))[u_{2}] - \frac{1}{2}I_{0}(\mu_{0})[u_{2}^{\otimes 2}],\quad \Delta_{2} \sim \mathcal{N}(0,I_{2}(\sigma_{0})), \label{dist_conv_contrast_z2}
    \end{align}
    and 
    \begin{align}
        \log \tilde{Z}_{n,\varepsilon,\lambda}^{0}[u_{2}] = &\, \varepsilon \partial_{\mu}\tilde{\Psi}_{n,\varepsilon}^{(0)}(\mu_{0})[u] + \frac{1}{2}\varepsilon^{2}\partial_{\mu}^{2}\tilde{\Psi}_{n,\varepsilon}^{(0)}(\mu_{0})[(u_{2})^{\otimes 2}] \nonumber\\
        &\,\quad +  \int_{t = 0}^{t = 1} \frac{(1-t)^{2}}{2} \partial_{\mu}^{3}\tilde{\Psi}_{n,\varepsilon}^{(0)}(\mu_{0} + \varepsilon t) \bigg[\bigg(\varepsilon u_{2}\bigg)^{\otimes 3} \bigg] \nonumber\\
        = &\, \varepsilon \partial_{\mu}\tilde{\Psi}_{n,\varepsilon}^{(0)}(\mu_{0})[u] - \frac{1}{2}I_{0}(\mu_{0})[u_{2}^{\otimes 2}] + o_{p}(1) \nonumber\\
        \ind &\, \mathcal{N}(0,I_{0}(\mu_{0}))[u_{2}] - \frac{1}{2}I_{0}(\mu_{0})[u_{2}^{\otimes 2}],\quad \Delta_{3} \sim \mathcal{N}(0,I_{3}(\alpha_{0})). \label{dist_conv_contrast_z3}
    \end{align}
In addition, for any $L > 0$, there exists a constant $C_{L} > 0$ such that for any $r > 0$, 
\begin{align}
    &\sup_{n,\varepsilon,\lambda}\mathbb{P}\bigg(\sup_{u_{2} \in V_{\varepsilon}(\mu_{0})}\tilde{Z}_{n,\varepsilon,\lambda}^{(0)}(\mu_{0}) \geq \exp\{-r/2\} \bigg) \leq \frac{C_{L}}{r^{L}},\label{large_devi_ineq_z2}\\
    &\sup_{n,\varepsilon,\lambda}\mathbb{P}\bigg(\sup_{u_{3} \in V_{\varepsilon}(\mu_{0})}\tilde{Z}_{n,\varepsilon,\lambda}^{(0)}(\mu_{0}) \geq \exp\{-r/2\} \bigg) \leq \frac{C_{L}}{r^{L}}.\label{large_devi_ineq_z3}
\end{align}
Combining \eqref{dist_conv_contrast_z2},\eqref{large_devi_ineq_z2} 
and \eqref{dist_conv_contrast_z3},\eqref{large_devi_ineq_z3}, 
we obtain \eqref{AN_ideal_adaptive_mu} and \eqref{AN_ideal_adaptive_alpha}. We also get \eqref{moment_conv_adaptive_sigma} and \eqref{moment_conv_adaptive_alpha}. 
Now, we show the joint convergence of \eqref{AN_ideal_adaptive_sigma} - \eqref{AN_ideal_adaptive_alpha}. To apply Theorem 10 (b) in Yoshida \cite{17},
we define the function $\tilde{H}(\mu_{1},\sigma,\mu_{2},\alpha) : \overline{\Theta}_{2} \times \overline{\Theta}_{1} \times \overline{\Theta}_{2} \times \overline{\Theta}_{3} \to \mathbb{R}$ by 
\begin{align}
    &\tilde{H}(\mu_{1},\sigma,\mu_{2},\alpha) := \tilde{\Psi}_{n,\varepsilon}^{(0)}(\mu_{1}) + \tilde{\Psi}_{n,\varepsilon}^{(1)}(\mu_{1},\sigma) + \tilde{\Psi}_{n,\varepsilon}^{(1)}(\mu_{2},\sigma) + \tilde{\Psi}_{\lambda}^{(2)}(\alpha), \nonumber  
\end{align}
where $(\mu_{1},\sigma,\mu_{2},\alpha) \in \overline{\Theta}_{2} \times \overline{\Theta}_{1} \times \overline{\Theta}_{2} \times \overline{\Theta}_{3}$.
In addition, based on the above symbol,
\begin{align}
    &\tilde{Z}_{n,\varepsilon,\lambda}[u_{1},u_{2},u_{3},u_{4}] := \tilde{H}(\varepsilon u_{1} + \mu_{0},u_{2}/\sqrt{n} + \sigma_{0},\varepsilon u_{3} + \mu_{0},u_{4}/\sqrt{\lambda} + \alpha_{0}) - \tilde{H}(\mu_{0},\sigma_{0},\mu_{0},\alpha_{0}), \nonumber 
\end{align}
where $(u_{1},u_{2},u_{3},u_{4}) \in V_{\varepsilon}(\mu_{0}) \times  V_{n}(\sigma_{0}) \times V_{\varepsilon}(\mu_{0}) \times V_{\lambda}(\alpha_{0})$.
In the view of theorem 10 (c) in Yoshdia \cite{17}, it suffices to prove the following:
\begin{itemize}
    \item For any $R > 0$ and $u_{0},u_{1},u_{2},u_{3}$ with 
    $\lvert u_{0} \rvert^{2} + \lvert u_{1} \rvert^{2} + \lvert u_{2} \rvert^{2} + \lvert u_{3} \rvert^{2} \leq R^{2}$ ,
    \begin{align}
        &\begin{pmatrix}
            \log \tilde{Z}_{n,\varepsilon}^{(0)}[u_{0}]\\
            \log \tilde{Z}_{n,\varepsilon}^{(1)}[u_{1}]\\
            \log \tilde{Z}_{n,\varepsilon}^{(2)}[u_{2}]\\
            \log \tilde{Z}_{n,\lambda}^{(3)}[u_{3}]
        \end{pmatrix}
        \ind 
        \begin{pmatrix}
            \xi_{0}[u_{0}] - \frac{1}{2}I_{0}(\mu_{0})[u_{0}^{\otimes 2}]\\
            \xi_{1}[u_{1}] - \frac{1}{2}I_{0}(\mu_{0})[u_{1}^{\otimes 2}]\\
            \xi_{2}[u_{2}] - \frac{1}{2}I_{0}(\mu_{0})[u_{2}^{\otimes 2}]\\
            \xi_{3}[u_{3}] - \frac{1}{2}I_{0}(\mu_{0})[u_{3}^{\otimes 2}]
        \end{pmatrix}
        .
        \label{joint_conv}
    \end{align}
    \item For any $L > 0$, there exists a constant $C_{L} > 0$ such that for any $r > 0$,
    \begin{align}
        &\mathbb{P}\bigg(\sup_{\substack{(u_{0},u_{1},u_{2},u_{3})\\ \in V_{\varepsilon}(\mu_{0}) \times V_{\varepsilon}(\sigma_{0}) \times V_{\varepsilon}(\mu_{0}) \times V_{\lambda}(\alpha_{0})}}\tilde{Z}_{n,\varepsilon,\lambda}^{(0)}(u_{0},u_{1},u_{2},u_{3}) \geq \exp\{-r/2\} \bigg) \leq \frac{C_{L}}{r^{L}}.\label{joint_large_devi_ineq}
    \end{align}
\end{itemize}
For \eqref{joint_conv}, it can be proven similarly to Ogihara and Yoshida \cite{11}
 from \eqref{dist_conv_contrast_z0}, \eqref{dist_conv_contrast_z1}, \eqref{dist_conv_contrast_z2}, and \eqref{dist_conv_contrast_z3}. \eqref{joint_large_devi_ineq} follows from \eqref{large_devi_ineq_z0}, \eqref{large_devi_ineq_z1}, \eqref{large_devi_ineq_z2}, and \eqref{large_devi_ineq_z3}. Consequently, we have obtained the desired conclusion as the joint convergence of \eqref{AN_ideal_initial} and \eqref{AN_ideal_adaptive_sigma}
  - \eqref{AN_ideal_adaptive_alpha} follows.
\end{proof}
\subsection{Proof of Lemma \ref{moment_est_contrast0} - \ref{moment_est_contrast3}}
\begin{proof}[Proof of Lemma \ref{moment_est_contrast0}]
Let us start to prove Lemma \ref{moment_est_contrast0} (1). By Lemma \ref{moment_INEQ_diffusion} (1) , we obtain that for any $p \in \mathbb{N}$, 
\begin{align}
    \sup_{n,\varepsilon,\lambda}
    \mathbb{E}_{\theta_{0}}\bigg[ \bigg\lvert \varepsilon \partial_{\mu}\tilde{\Psi}_{n,\varepsilon}^{(0)}(\mu_{0}) \bigg \rvert^{p} \bigg]
     =&\,  \sup_{n,\varepsilon,\lambda}\mathbb{E}_{\theta_{0}}
     \bigg[\bigg \lvert\varepsilon^{-1}\sum_{k = 1}^{n}\partial_{\mu}a(X_{t_{k-1}^{n}}^{\theta,\varepsilon},\mu_{0})
     \tilde{\eta}_{k}^{n}(\mu_{0}) \bigg \rvert^{p}\bigg]  < \infty. \nonumber
\end{align}

Concerning \ref{moment_est_contrast0} (2), we have that, from Lemma \ref{moment_INEQ_diffusion} (2) and (3), that for any $p \in \mathbb{N}$,
\begin{align}
&\mathbb{E}_{\theta_{0}}\bigg[\bigg \lvert \varepsilon^{2} \partial_{\alpha}^{2}\tilde{\Psi}_{n,\varepsilon}^{(0)}(\mu_{0}) 
 + I_{0}(\mu;\mu_{0})\bigg \rvert^{p} \bigg] \nonumber\\
 \lesssim &\mathbb{E}_{\theta_{0}}\bigg[\bigg \lvert \sum_{k = 1}^{n}\partial_{\mu}^{2}a(X_{t_{k-1}^{n}}^{\theta,\varepsilon},\mu_{0})\tilde{\eta}_{k}^{n}(\mu_{0}) \bigg \rvert^{p} \bigg] \nonumber\\
 +&\mathbb{E}_{\theta_{0}}\bigg[\bigg \lvert
  \frac{1}{n}\sum_{k = 1}^{n}\big(\partial_{\mu}a(X_{t_{k-1}^{n}}^{\theta,\varepsilon},\mu_{0}) \big)^{2}\bm{1}_{J_{k,0}^{n}} - I_{0}(\mu;\mu_{0})
 \bigg \rvert^{p} \bigg] \nonumber\\
 =&\,O(\varepsilon^{p}). \nonumber
\end{align}
Thus, we find that for any $\eta_{0} \in (0,1/2)$ and for any $p \in \mathbb{N}$,
\begin{align}
\sup_{n,\varepsilon,\lambda}\mathbb{E}_{\theta_{0}}\bigg[\bigg(\varepsilon^{\eta}\bigg \lvert \varepsilon^{2} \partial_{\alpha}^{2}\tilde{\Psi}_{n,\varepsilon}^{(0)}(\mu_{0}) 
 + I_{0}(\mu;\mu_{0})\bigg \rvert \bigg)^{p} \bigg]  \leq& \sup_{n,\varepsilon,\lambda}(\varepsilon^{(1 - \eta)p}) < \infty. \nonumber
\end{align}

Now, we show \ref{moment_est_contrast0} (3). We have that for any $p \in \mathbb{N}$,
\begin{align}
&\sup_{n,\varepsilon,\lambda}\mathbb{E}_{\theta_{0}}\bigg[\sup_{\theta \in \Theta} \bigg\lvert \varepsilon^{2}\partial_{\mu}^{3}\tilde{\Psi}_{n,\varepsilon}^{(0)}(\mu)\bigg \rvert^{p}\bigg] \nonumber\\
\lesssim&\,\mathbb{E}_{\theta_{0}} \bigg[\sup_{\theta \in \Theta}\bigg\lvert\sum_{k = 1}^{n}\partial_{\mu}^{3} a(X_{t_{k-1}^{n}}^{\theta,\varepsilon},\mu)
\tilde{\eta}_{k}^{n}(\mu_{0}) \bigg\rvert^{p}\bigg]\nonumber\\
 +& \mathbb{E}_{\theta_{0}}\bigg[\sup_{\theta \in \Theta}\bigg\lvert \frac{1}{n}\sum_{k = 1}^{n}\bigg(
 \partial_{\mu}^{2}a(X_{t_{k-1}^{n}}^{\theta,\varepsilon},\mu)\partial_{\mu}
 a(X_{t_{k-1}^{n}}^{\theta,\varepsilon},\mu)
 + \partial_{\mu}^{3} a^{2}(X_{t_{k-1}^{n}}^{\theta,\varepsilon},\mu)
 \bigg)
 \bigg\rvert^{p}\bigg] \nonumber\\
<&\infty. \nonumber
\end{align}
\end{proof}
\begin{proof}[Proof of Lemma \ref{moment_est_contrast1}]
At first, we are going to show \ref{moment_est_contrast1} (1). 
From Lemma \ref{moment_INEQ_diffusion} (1), \ref{moment_INEQ_diffusion} (4) and \ref{moment_INEQ_diffusion} (5), 
we obtain that for any $p \in \mathbb{N}$,
\begin{align}
&\mathbb{E}_{\theta_{0}}\bigg[\bigg\lvert \frac{1}{\sqrt{n}} \partial_{\sigma}\tilde{\Psi}_{n,\varepsilon}(\sigma_{0},\tilde{\mu}_{n,\varepsilon}) \bigg\rvert^{p}\bigg] \nonumber\\
\lesssim\,&\,\mathbb{E}_{\theta_{0}}\bigg[\bigg\lvert
\frac{\sqrt{n}}{\varepsilon^{2}}\sum_{k = 1}^{n} \frac{1}{2}
\bigg(\tilde{\eta}_{k}^{n}(\mu_{0})^{2} - \frac{\varepsilon^{2}}{n}b^{2}(X_{t_{k-1}^{n}},\sigma_{0})\bigg)\partial_{\sigma}b^{2}(X_{t_{k-1}^{n}},\sigma_{0})^{-1} 
\bigg\rvert^{p} \bigg] \nonumber\\
+\,&\,\mathbb{E}_{\theta_{0}}\bigg[\bigg(\frac{1}{\varepsilon\sqrt{n}}\bigg \lvert 
\varepsilon^{-1}\sum_{k = 1}^{n}
 \tilde{\eta}_{k}^{n}(\mu_{0})
 \big(a(X_{t_{k-1}^{n}}^{\theta,\varepsilon},\tilde{\mu}_{n,\varepsilon}) -  a(X_{t_{k-1}^{n}}^{\theta,\varepsilon},\mu_{0})\big) \partial_{\sigma} b^{2}(X_{t_{k-1}^{n}}^{\theta,\varepsilon},\sigma_{0})^{-1} 
\bigg\rvert\bigg)^{p}\bigg] \nonumber\\
+\,&\,\mathbb{E}_{\theta_{0}}
\bigg[\bigg(\frac{1}{\sqrt{n}}
\bigg\lvert\frac{1}{n}\sum_{k = 1}^{n}\partial_{\sigma}b^{2}(X_{t_{k-1}^{n}}^{\theta,\varepsilon},\sigma)^{-1}\bigg(
    \int_{0}^{1}\partial_{\mu}a\big(X_{t_{k-1}^{n}}^{\theta,\varepsilon},\mu_{0} + s(\tilde{\mu}_{n,\varepsilon} - \mu_{0})\big)ds\big) \cdot \varepsilon^{-1}(\tilde{\mu}_{n,\varepsilon}- \mu_{0}) \bigg)^{2}  
\bigg \rvert \bigg)^{p}\bigg] \nonumber\\
=\,&\,O(1) + O(1/(\sqrt{n}\varepsilon)^{p}) + O(1/n^{p/2}), \nonumber
\end{align}
which implies Lemma \ref{moment_est_contrast1} (1).

Secondary, we show Lemma \ref{moment_est_contrast1} (2).
 It is easy to obtain that for any $p \in \mathbb{N}$,
\begin{align}
&\,\mathbb{E}_{\theta_{0}}\bigg[\bigg(n^{\eta_{2}}\bigg\lvert 
    \frac{1}{n}\partial_{\sigma}^{2}\Psi_{n,\varepsilon}^{(1)}(\sigma_{0},\tilde{\mu}_{n,\varepsilon})
     + I_{1}(\sigma_{0})\bigg \rvert \bigg)^{p}\bigg]  \nonumber\\
     \lesssim\,& \mathbb{E}_{\theta_{0}}\bigg[\bigg\lvert\frac{1}{2}\sum_{k = 1}^{n}\frac{\tilde{\eta}_{k}^{n}(\mu_{0})^{2} - \varepsilon^{2}\frac{1}{n}b^{2}(X_{t_{k-1}^{n}},\sigma_{0})\bm{1}_{J_{k,0}^{n}}}{\frac{1}{n} \varepsilon^{2}} \partial_{\sigma}^{2}b^{2}(X_{t_{k-1}^{n}}^{\theta,\varepsilon},\sigma_{0}) b^{2}(X_{t_{k-1}^{n}}^{\theta,\varepsilon},\sigma_{0})^{-2}\bigg \rvert^{p} \bigg] \nonumber\\
     +\,&\,\mathbb{E}_{\theta_{0}}\bigg[ \bigg\lvert \sum_{k = 1}^{n}\frac{\tilde{\eta}_{k}^{n}(\mu_{0})^{2} - \varepsilon^{2}\frac{1}{n}b^{2}(X_{t_{k-1}^{n}},\sigma_{0})\bm{1}_{J_{k,0}^{n}} }{\frac{1}{n} \varepsilon^{2}} \partial_{\sigma}b^{2}(X_{t_{k-1}^{n}}^{\theta,\varepsilon},\sigma_{0})^{2}b^{2}(X_{t_{k-1}^{n}}^{\theta,\varepsilon},\sigma_{0})^{-3} \bigg\rvert^{p}\bigg] \nonumber\\
     +\,&\,\mathbb{E}_{\theta_{0}}\bigg[ \bigg \lvert I_{1}(\sigma_{0})-\frac{1}{2}\sum_{k = 1}^{n}\partial_{\sigma}b^{2}(X_{t_{k-1}^{n}}^{\theta,\varepsilon},\sigma_{0})^{2} b^{2}(X_{t_{k-1}^{n}}^{\theta,\varepsilon},\sigma_{0})^{-2} \bm{1}_{J_{k,0}^{n}}\bigg \rvert^{p} \bigg] \nonumber\\
+\,&\,\mathbb{E}_{\theta_{0}}\bigg[\bigg(\frac{\varepsilon}{n}\bigg \lvert 
\varepsilon^{-1} \sum_{k = 1}^{n}\tilde{\eta}_{k}^{n}(\mu_{0})\big(a(X_{t_{k-1}^{n}}^{\theta,\varepsilon},\tilde{\mu}_{n,\varepsilon}) -  a(X_{t_{k-1}^{n}}^{\theta,\varepsilon},\mu_{0})\big) \partial_{\sigma} b^{2}(X_{t_{k-1}^{n}}^{\theta,\varepsilon},\sigma_{0})^{-1} 
\bigg \rvert\bigg)^{p}\bigg] \nonumber\\
+\,&\,\mathbb{E}_{\theta_{0}}\bigg[\bigg(\frac{1}{n}
\bigg\lvert \frac{1}{n}\sum_{k = 1}^{n}\partial_{\sigma}b^{2}(X_{t_{k-1}^{n}}^{\theta,\varepsilon},\sigma)^{-1}\bigg( \int_{0}^{1}\partial_{\mu}
a\big(X_{t_{k-1}^{n}}^{\theta,\varepsilon},\mu_{0} + s(\tilde{\mu}_{n,\varepsilon} - \mu_{0})\big)ds \varepsilon^{-1}(\tilde{\mu}_{n,\varepsilon}- \mu_{0})\bigg)^{2}  \bigg\rvert \bigg)^{p}\bigg] \nonumber\\
=&\,O(n^{-p/2}) + O(\varepsilon^{p}/n^{p}) + O(1/n^{p}). \nonumber
\end{align}
Thus, we obtain Lemma \ref{moment_est_contrast1} (2) from \Aref{a9}.

Now, we show Lemma \ref{moment_est_contrast1} (3). Applying Lemma \ref{moment_INEQ_diffusion} (5),(6) and Lemma \ref{moment_INEQ_diffusion} (1) and Lemma \ref{approx_for_conditional_e} (2), we have that for any $p \in \mathbb{N}$,
\begin{align}
&\mathbb{E}_{\theta_{0}}\bigg[\bigg\lvert\frac{1}{n}\partial_{\sigma}^{3}\tilde{\Psi}_{n,\varepsilon}^{(1)}(\sigma,\tilde{\mu}_{n,\varepsilon})  \bigg \rvert^{p}\bigg]\nonumber\\
    \lesssim &\, \mathbb{E}_{\theta_{0}}\bigg[\bigg \lvert 
    -\frac{1}{2\varepsilon^{2}}\sum_{k = 1}^{n}\partial_{\sigma}^{3} 
    b^{2}(X_{t_{k-1}^{n}}^{\theta,\varepsilon},\sigma) 
     \bigg\{\tilde{\eta}_{k}^{n}(\sigma_{0})^{2} - \varepsilon^{2}\frac{1}{n}b^{2}
     (X_{t_{k-1}^{n}}^{\theta,\varepsilon},\sigma_{0})\bigg\}
    \bigg\rvert^{p} \bigg] \nonumber\\
      + &\, \mathbb{E}_{\theta_{0}}\bigg[\bigg\lvert \frac{1}{2n}\sum_{k = 1}^{n}\bigg\{
        \partial_{\sigma}^{3}
        b^{2}(X_{t_{k-1}^{n}}^{\theta,\varepsilon},\sigma)  b^{2}(X_{t_{k-1}^{n}}^{\theta,\varepsilon},
        \sigma_{0})
         + \partial_{\sigma}^{3}\log b^{2}(X_{t_{k-1}^{n}}^{\theta,\varepsilon},\sigma)\bigg\}
         \bm{1}_{J_{k,0}^{n}}\bigg\rvert^{p} \bigg] \nonumber\\
        + &\,\mathbb{E}_{\theta_{0}}\bigg[\bigg\lvert
         - \frac{1}{n\varepsilon^{2}}\sum_{k = 1}^{n}\big( 
        a(X_{t_{k-1}^{n}}^{\theta,\varepsilon},\mu) - a(X_{t_{k-1}^{n}}^{\theta,\varepsilon},\mu_{0})
        \big)\partial_{\sigma}^{3}\big(b^{2}(X_{t_{k-1}^{n}}^{\theta,\varepsilon},\sigma)^{-1} \big)\tilde{\eta}_{k}^{n}(\mu_{0}) 
        \bigg\rvert^{p} \bigg] \nonumber\\
       + &\,\mathbb{E}_{\theta_{0}}\bigg[\bigg\lvert
       - \frac{1}{2n\varepsilon^{2}} \frac{1}{n}\sum_{k = 1}^{n}
    \big(a(X_{t_{k-1}^{n}}^{\theta,\varepsilon},\tilde{\mu}_{n,\varepsilon}) 
    - a(X_{t_{k-1}^{n}}^{\theta,\varepsilon},\mu_{0}) \big)^{2}\partial_{\sigma}^{3}
      \big( b^{2}(X_{t_{k-1}^{n}}^{\theta,\varepsilon},\sigma)^{-1}\big)\bm{1}_{J_{k,0}^{n}} \bigg\rvert^{p} \bigg] \nonumber\\
      =\,&\, O(1/n^{p/2}) + O(1) + O(1/(n\varepsilon^{2})^{p}). \nonumber
\end{align}
Thus, we obtain Lemma \ref{moment_est_contrast1} (3).
\end{proof}
\begin{proof}[Proof of Lemma \ref{moment_est_contrast2}]
As in the proof of Lemma \ref{moment_est_contrast0} (1) and (3),
we obtain Lemma \ref{moment_est_contrast2} (1) and (3).
Thus, we only show Lemma \ref{moment_est_contrast2} (2).
From Lemma \ref{moment_INEQ_diffusion} (1), (2) and (3) and \eqref{moment_conv_adaptive_sigma},
we have that for any $p \in \mathbb{N}$,
\begin{align}
    &\,\mathbb{E}_{\theta_{0}}\bigg[\bigg(\varepsilon^{-\eta_{2}} \bigg \lvert
     \varepsilon^{2} \partial_{\mu}\tilde{\Psi}_{n,\varepsilon}^{(1)}(\tilde{\sigma}_{n,\varepsilon},\mu_{0})
     + I_{2}(\mu_{0}) \bigg \rvert \bigg)^{p}  \bigg] \nonumber\\
   \lesssim\,&\, \mathbb{E}_{\theta_{0}}\bigg[ \bigg( \varepsilon^{-\eta_{2}}
   \bigg \lvert - \sum_{k = 1}^{n}\tilde{\eta}_{k}^{n}(\mu_{0})
         \partial_{\mu}^{2}a(X_{t_{k-1}^{n}}^{\theta,\varepsilon},\mu_{0}) 
         b^{2}(X_{t_{k-1}^{n}}^{\theta,\varepsilon},\sigma_{0})^{-1} \bigg\rvert \bigg)^{p}\bigg] \nonumber\\ 
   + \,&\, \mathbb{E}_{\theta_{0}}\bigg[ \bigg( \varepsilon^{-\eta_{2}}\bigg \lvert 
   \frac{1}{n^{3/2}}\sum_{k = 1}^{n} 
        \big(\partial_{\mu}a(X_{t_{k-1}^{n}}^{\theta,\varepsilon},\mu_{0})\big)^{2}\int_{0}^{ 1}\partial_{\sigma}
            \big(b^{2}\big(X_{t_{k-1}^{n}}^{\theta,\varepsilon},\sigma_{0} + s(\tilde{\sigma}_{n,\varepsilon} - \sigma_{0}) \big)^{-1}\big) ds\big(\sqrt{n}(\tilde{\sigma}_{n,\varepsilon} - \sigma_{0}) \big)\bm{1}_{J_{k,0}^{n}}  
    \bigg\rvert \bigg)^{p}\bigg] \nonumber\\
    + \,&\,\mathbb{E}_{\theta_{0}}\bigg[ \bigg( \varepsilon^{-\eta_{2}}\bigg \lvert 
     - \frac{1}{n}\sum_{k = 1}^{n} \big(\partial_{\mu}a(X_{t_{k-1}^{n}}^{\theta,\varepsilon},
        \mu_{0})\big)^{2}b^{2}\big(X_{t_{k-1}^{n}}^{\theta,\varepsilon},\sigma_{0}\big)^{-1}\bm{1}_{J_{k,0}^{n}}
    + I_{2}(\mu_{0})\bigg\rvert \bigg)^{p}\bigg]\nonumber\\
     =\,&\,  O(1/(\varepsilon^{\eta_{2}}\sqrt{n})^p) 
     + O(\varepsilon^{(1 - \eta_{2})p}/(\sqrt{n}\varepsilon)^{p})
      +O(\varepsilon^{(1 -\eta_{2})p}). \nonumber
\end{align}
From \Aref{a9}, we obtain the proof of Lemma \ref{moment_est_contrast2} (2).
\end{proof}
\begin{proof}[Proof of Lemma \ref{moment_est_contrast3}]
    Set for any $n \in \mathbb{N}$ and $k = 1,...,n$,
    \begin{align}
        \psi_{k-1}( c(X_{t_{k-1}^{n}}^{\theta,\varepsilon},\alpha_{0})V_{\tau_{k}},\alpha)&
        := \psi\big(X_{t_{k-1}^{n}}^{\theta,\varepsilon},c(X_{t_{k-1}^{n}}^{\theta,\varepsilon},\alpha_{0})V_{\tau_{k}},\alpha\big). \nonumber
    \end{align}
    Let us start to show Lemma \ref{moment_est_contrast3} (1). As in Lemma 4.17 in Kobayashi and  Shimizu \cite{9}, we have that
    \begin{align}
        &\,\frac{1}{\sqrt{\lambda}} \sum_{k = 1}^{n} \mathbb{E}_{\theta_{0}}\bigg[ 
            \partial_{\alpha} \psi_{k-1}(c(X_{t_{k-1}^{n}}^{\theta,\varepsilon},\alpha_{0})V_{\tau_{k}},\alpha) \mid \mathcal{F}_{t_{k-1}^{n}}\bigg] = 0, \nonumber
    \end{align}
    and as in the proof of Lemma \ref{moment_ineq_jump}, we have that for any $p \in \mathbb{N}$,
    \begin{align}
        &\mathbb{E}_{\theta_{0}}\bigg[\bigg\lvert \sum_{k = 1}^{n}\frac{1}{\sqrt{\lambda}}
        \psi_{k-1}(c(X_{t_{k-1}^{n}}^{\theta,\varepsilon},\alpha_{0})V_{\tau_{k}},\alpha_{0}) \bigg\rvert^{p} \bigg] \lesssim \mathbb{E}_{\theta_{0}}\bigg[\bigg \lvert \frac{1}{\lambda}\sum_{k = 1}^{n}\psi_{k-1}^{2}(c(X_{t_{k-1}^{n}}^{\theta,\varepsilon},\alpha_{0})V_{\tau_{k}},\alpha_{0}) \bigg \rvert^{p/2} \bigg] < \infty, \nonumber
    \end{align}
    which completes the proof of Lemma \ref{moment_est_contrast2} (1).
    
    Concerning Lemma \ref{moment_est_contrast2} (2), as in the proof of Lemma \ref{moment_ineq_jump}, we obtain that for any $p \in \mathbb{N}$,
\begin{align}
    &\,\mathbb{E}_{\theta_{0}}\bigg[\bigg \lvert \sum_{k = 1}^{n}(1/\lambda) 
    \partial_{\alpha}^{2}\psi_{k-1}(c(X_{t_{k-1}^{n}}^{\theta,\varepsilon},\alpha_{0})V_{\tau_{k}},\alpha_{0})
     + I_{3}(\alpha_{0}) \bigg \rvert^{p} \bigg] \nonumber\\
   \lesssim \,&\,\mathbb{E}_{\theta_{0}}\bigg[\bigg \lvert \sum_{k = 1}^{n}\bigg\{(1/\lambda) 
    \partial_{\alpha}^{2}\psi_{k-1}(c(X_{t_{k-1}^{n}}^{\theta,\varepsilon},\alpha_{0})V_{\tau_{k}},\alpha_{0}) 
    \nonumber\\
    &\,- \mathbb{E}_{\theta_{0}}\bigg[ (1/\lambda) 
    \partial_{\alpha}^{2}\psi_{k-1}(c(X_{t_{k-1}^{n}}^{\theta,\varepsilon},\alpha_{0})V_{\tau_{k}},\alpha_{0})  \mid \mathcal{F}_{t_{k-1}^{n}}\bigg]\bigg\}^{2}\bigg \rvert^{p/2}\bigg] \nonumber\\
   +\,&\,\mathbb{E}_{\theta_{0}}\bigg[\bigg \lvert 
    \frac{1}{n}\sum_{k = 1}^{n}\int_{\mathbb{R}}\partial_{\alpha}^{2}\psi_{k-1} 
    (c(X_{t_{k-1}^{n}}^{\theta,\varepsilon},\alpha_{0})y,\alpha_{0})f_{\alpha_{0}}(y)dy + I_{3}(\alpha_{0}) \bigg \rvert^{p} \bigg]. \nonumber
\end{align}
From Lemma \ref{moment_ineq_jump}, we have that for any $p \in \mathbb{N}$,
    \begin{align}
        &\mathbb{E}_{\theta_{0}}\bigg[\bigg \lvert 
        \sum_{k = 1}^{n}\bigg\{(1/\lambda) 
        \partial_{\alpha}^{2}\psi_{k-1}(c(X_{t_{k-1}^{n}}^{\theta,\varepsilon},\alpha_{0})V_{\tau_{k}},\alpha_{0}) 
        \nonumber\\
        &- \mathbb{E}_{\theta_{0}}\bigg[ (1/\lambda) 
        \partial_{\alpha}^{2}\psi_{k-1}(c(X_{t_{k-1}^{n}}^{\theta,\varepsilon},\alpha_{0})V_{\tau_{k}},\alpha_{0})  \mid \mathcal{F}_{t_{k-1}^{n}}\bigg]\bigg\}^{2}\bigg \rvert^{p/2}\bigg] = O(1/\lambda^{p/2}). \nonumber  
    \end{align} 
From \Aref{a2} and \ref{r_a_2}, and Lemma \ref{approx_for_conditional_e} (4), we also get that for any $p \in \mathbb{N}$,
    \begin{align}
        &\, \mathbb{E}_{\theta_{0}}\bigg[\bigg \lvert \frac{1}{n}\sum_{k = 1}^{n}\int_{\mathbb{R}}\partial_{\alpha}^{2}\psi_{k-1} 
        (c(X_{t_{k-1}^{n}}^{\theta,\varepsilon},\alpha_{0})y,\alpha_{0})f_{\alpha_{0}}(y)dy
         + I_{3}(\alpha_{0})\bigg\rvert^{p}\bigg] \nonumber\\
        &\lesssim \,\sum_{i = 0}^{2}\mathbb{E}_{\theta_{0}}\bigg[\bigg(\sum_{k = 1}^{n}
            \int_{t_{k-1}^{n}}^{t_{k}^{n}}\bigg \lvert \partial_{\alpha}^{i}c(x_{s},\alpha_{0}) 
        - \partial_{\alpha}^{i}c(X_{t_{k-1}^{n}}^{\theta,\varepsilon},\alpha_{0})\bigg \rvert ds\bigg)^{p}\bigg] \nonumber\\
        &\lesssim \mathbb{E}_{\theta_{0}}\bigg[\sup_{t,s \in [0,1], \lvert t -s \rvert < 1/n}\bigg \lvert x_{s} - X_{t}^{\theta,\varepsilon} \bigg \rvert^{p}\bigg] = O\big(1/ n^{p}+ (\varepsilon \lambda)^{p}\big). \nonumber
    \end{align}
    From \Aref{a9}, we also find that for $\eta_{3} \in (0,p_{2})$,
\begin{align}
&\sup_{n,\varepsilon,\lambda}\mathbb{E}_{\theta_{0}}\bigg[\bigg( \lambda^{\eta_{3}} \lvert \frac{1}{n}\sum_{k = 1}^{n}\int_{\mathbb{R}}\partial_{\alpha}^{2}\psi_{k-1} 
        (c(X_{t_{k-1}^{n}}^{\theta,\varepsilon},\alpha_{0})y,\alpha_{0})f_{\alpha_{0}}(y)dy
         + I_{3}(\alpha_{0})\bigg\rvert \bigg)^{p}\bigg] < \infty. \nonumber
\end{align}
Thus, we obtain \ref{moment_est_contrast3} (2). 
Concerning Lemma \ref{moment_est_contrast3} (3), applying Lemma \ref{moment_ineq_jump}, we find that 
for any $p \in \mathbb{N}$,
\begin{align}
        &\,\sup_{n,\varepsilon,\lambda}\sup_{\alpha \in \overline{\Theta}_{3}}\mathbb{E}_{\theta_{0}}\bigg[\bigg\lvert \frac{1}{\lambda}\partial_{\alpha}^{3}\tilde{\Psi}_{\lambda}^{(2)}(\alpha)\bigg\rvert^{p}\bigg],\quad \,\sup_{n,\varepsilon,\lambda}\sup_{\alpha \in \overline{\Theta}_{3}}\mathbb{E}_{\theta_{0}}\bigg[\bigg\lvert \frac{1}{\lambda}\partial_{\alpha}^{4}\tilde{\Psi}_{\lambda}^{(2)}(\alpha)\bigg\rvert^{p}\bigg] < \infty. \nonumber
\end{align}
Then, by Burkholder and Davis inequality, we obtain the desired result.
\end{proof}
\subsection{proof of Lemma \ref{approx_for_conditional_e} - \ref{moment_ineq_jump}}
\begin{proof}[Proof of Lemma \ref{approx_for_conditional_e}]
    We show Lemma \ref{approx_for_conditional_e} (1). As in Lemma 6 \cite{8}, we have that for any $p \in \mathbb{N}$,
\begin{align}
    &\mathbb{E}_{\theta_{0}}
    \bigg[\big\lvert \Delta_{k}^{n}X^{\theta,\varepsilon}\bm{1}_{J_{k,0}^{n}}\big \rvert 
    \mid \mathcal{F}_{t_{k-1}^{n}} \bigg] \nonumber\\
    \leq& \bigg(\frac{1}{n} + 
    \frac{\varepsilon}{\sqrt{n}}\bigg)\int_{t_{k-1}^{n}}^{t_{k}^{n}}\mathbb{E}_{\theta_{0}}
    \bigg[\lvert X_{s}^{\theta,\varepsilon} - X_{t_{k-1}^{n}}^{\theta,\varepsilon} \rvert \bm{1}_{J_{k,0}^{n}}\bigg \rvert \mathcal{F}_{t_{k-1}^{n}}\bigg]  + \frac{1}{n}\lvert a(X_{t_{k-1}^{n}}^{\theta,\varepsilon},\mu_{0}) \rvert + \frac{\varepsilon}{\sqrt{n}} \lvert b(X_{t_{k-1}^{n}}^{\theta,\varepsilon},\sigma_{0}) \rvert,  \nonumber
\end{align}
and by Grownwall's inequality, we obtain Lemma \ref{approx_for_conditional_e} (1).

We show Lemma \ref{approx_for_conditional_e} (2). It is easy to obtain that 
\begin{align}
   \mathbb{E}_{\theta_{0}}
   \bigg[\tilde{\eta}_{k}^{n}(\mu_{0}) \mid \mathcal{F}_{t_{k-1}^{n}} \bigg]  & = \mathbb{E}_{\theta_{0}}\bigg[\int_{t_{k-1}^{n}}^{t_{k}^{n}}\bigg(a(X_{s}^{\theta,\varepsilon},\mu_{0}) - a(X_{t_{k-1}^{n}}^{\theta,\varepsilon},\mu_{0})\bigg) 
   \bm{1}_{J_{k,0}^{n}}ds \mid \mathcal{F}_{t_{k-1}^{n}} \bigg]. \nonumber
\end{align}
By Lemma \ref{approx_for_conditional_e} (1), we obtain 
\begin{align}
    &\,\mathbb{E}_{\theta_{0}}\bigg[ \int_{t_{k-1}^{n}}^{t_{k}^{n}}\bigg(a(X_{s}^{\theta,\varepsilon},\mu_{0}) - a(X_{t_{k-1}^{n}}^{\theta,\varepsilon},\mu_{0})\bigg) \bm{1}_{J_{k,0}^{n}}ds  \mid \mathcal{F}_{t_{k-1}^{n}} \bigg] = R(X_{t_{k-1}^{n}}^{\theta,\varepsilon},\frac{1}{n^{2}}), \nonumber
\end{align}
which immediately implies the result of Lemma \ref{approx_for_conditional_e} (2).

We prove Lemma \ref{approx_for_conditional_e} (3) imitating the proof of Kaino and Uchida \cite{4} and we have that
\begin{align}
    &\, \bigg\lvert \mathbb{E}_{\theta_{0}}\bigg[\bigg(\tilde{\eta}_{k}^{n}(\mu_{0})^{2} - \frac{\varepsilon^{2}}{n}b^{2}(X_{t_{k-1}^{n}}^{\theta,\varepsilon},\sigma_{0})\bigg)\bm{1}_{J_{k,0}^{n}} \mid \mathcal{F}_{t_{k-1}^{n}}\bigg] \bigg\rvert \nonumber\\
    \lesssim \,&\, \mathbb{E}_{\theta_{0}}\bigg[\bigg\{ \int_{t_{k-1}^{n}}^{t_{k}^{n}} \big(a(X_{s}^{\theta,\varepsilon},\mu_{0}) - a(X_{t_{k-1}^{n}},\mu_{0})\big)\bm{1}_{J_{k,0}^{n}} ds \bigg \}^{2} \mid \mathcal{F}_{t_{k-1}^{n}}\bigg] \nonumber\\
    +\,&\, \varepsilon^{2} \mathbb{E}_{\theta_{0}}\bigg[\bigg\{\int_{t_{k-1}^{n}}^{t_{k}^{n}}\big(b(X_{s}^{\theta,\varepsilon},\sigma_{0}) - b(X_{t_{k-1}^{n}},\sigma_{0})\big)\bm{1}_{J_{k,0}^{n}}dW_{s}\bigg\}^{2}\mid \mathcal{F}_{t_{k-1}^{n}}\bigg] \nonumber\\
    +\,&\, 2\varepsilon \lvert b(X_{t_{k-1}^{n}}^{\theta,\varepsilon},\sigma_{0}) \rvert \mathbb{E}_{\theta_{0}}\bigg[\bigg\lvert \Delta_{k}^{n}W \bigg\{ \int_{t_{k-1}^{n}}^{t_{k}^{n}} \big(a(X_{s}^{\theta,\varepsilon},\mu_{0}) - a(X_{t_{k-1}^{n}},\mu_{0})\big)\bm{1}_{J_{k,0}^{n}}ds\bigg\} \bigg \rvert \mid \mathcal{F}_{t_{k-1}^{n}}\bigg]\nonumber\\
    +\,&\, 2\varepsilon^{2} \lvert b(X_{t_{k-1}^{n}}^{\theta,\varepsilon},\sigma_{0}) \rvert \mathbb{E}_{\theta_{0}}\bigg[\bigg\lvert \Delta_{k}^{n}W \bigg\{ \int_{t_{k-1}^{n}}^{t_{k}^{n}} \big(b(X_{s}^{\theta,\varepsilon},\mu_{0}) - b(X_{t_{k-1}^{n}},\mu_{0})\big)\bm{1}_{J_{k,0}^{n}}dW_{s}\bigg\} \bigg \rvert \mid \mathcal{F}_{t_{k-1}^{n}}\bigg]\nonumber\\
    =&\, R(X_{t_{k-1}^{n}}^{\theta,\varepsilon},\frac{1}{n^{3}}) +R(X_{t_{k-1}^{n}}^{\theta,\varepsilon},\varepsilon^{2}\frac{1}{n^{3}}) +R(X_{t_{k-1}^{n}}^{\theta,\varepsilon},\varepsilon \frac{1}{n^{5/2}}) + R(X_{t_{k-1}^{n}}^{\theta,\varepsilon},\varepsilon^{2}\frac{1}{n^{2}}).\label{6.2.2}
\end{align}
We obtain Lemma \ref{approx_for_conditional_e} (3).

We obtain Lemma \ref{moment_INEQ_diffusion} (4) and (5) respectively as in the proof of Lemma 4.2 and 4.1 in Kobayashi and Shimizu \cite{9}. 
\end{proof}
\begin{proof}[Proof of Lemma \ref{moment_INEQ_diffusion}]
At first, we show Lemma \ref{moment_INEQ_diffusion} (1). As in the proof of Lemma 2 (1) in Kaino and Uchida \cite{4}, we obtain \ref{moment_INEQ_diffusion} (1).

We show Lemma \ref{moment_INEQ_diffusion} (2). It is easy to obtain that for any $p \in \mathbb{N}$, 
\begin{align}
    &\,\mathbb{E}_{\theta_{0}}\bigg[\sup_{\theta}\bigg\lvert \varepsilon^{-1} \bigg(\frac{1}{n}\sum_{k = 1}^{n}f(X_{t_{k-1}^{n}}^{\theta,\varepsilon},\theta) - \int_{0}^{1}f(x_{t},\theta)dt \bigg) \bigg\rvert^{p}\bigg] \nonumber\\
     \lesssim\,&\,\mathbb{E}_{\theta_{0}}\bigg[\sup_{\theta}\bigg\lvert \varepsilon^{-1} \sum_{k = 1}^{n}\int_{t_{k-1}^{n}}^{t_{k}^{n}}\bigg(f(X_{t_{k-1}^{n}}^{\theta,\varepsilon},\theta) - f(X_{t}^{\theta,\varepsilon},\theta)  \bigg)dt\bm{1}_{J_{k,0}^{n}}\bigg \rvert^{p} \bigg] \nonumber\\
    +\,&\,\mathbb{E}_{\theta_{0}}\bigg[\sup_{\theta} \bigg \lvert \varepsilon^{-1}\sum_{k = 1}^{n}\int_{t_{k-1}^{n}}^{t_{k}^{n}}\bigg(f(X_{t}^{\theta,\varepsilon},\theta) - f(x_{t},\theta)\bigg)ds\bm{1}_{J_{k,0}^{n}}\bigg \rvert^{p}\bigg] \nonumber\\
    +\,&\,\sup_{\theta \in \overline{\Theta}}\sup_{t \in [0,1]}\lvert f(x_{t},\theta) \rvert^{p} \mathbb{E}_{\theta_{0}}
    \bigg[\bigg(\varepsilon^{-1}\frac{1}{n}\sum_{k = 1}^{n}\bm{1}_{J_{k,1}^{n} \cup J_{k,2}^{n}}\bigg)^{p}\bigg]. \nonumber
\end{align}
At first, we show that for any $p \in \mathbb{N}$,
\begin{align}
    &\,\mathbb{E}_{\theta_{0}}\bigg[\sup_{\theta} \bigg\lvert \varepsilon^{-1}\sum_{k = 1}^{n}\int_{t_{k-1}^{n}}^{t_{k}^{n}}\bigg(f(X_{t_{k-1}^{n}}^{\theta,\varepsilon},\theta) - f(X_{t}^{\theta,\varepsilon},\theta)  \bigg)dt\bm{1}_{J_{k,0}^{n}}\bigg \rvert^{p} \bigg] < \infty. \nonumber
\end{align}
By Burkholder and Davis Inequality, it suffices to show that for any $p \in \mathbb{N}$,
\begin{align}
&\,\sup_{n,\varepsilon,\lambda}\sup_{\theta}\mathbb{E}_{\theta_{0}}\bigg[\bigg\lvert \varepsilon^{-1}\sum_{k = 1}^{n}\int_{t_{k-1}^{n}}^{t_{k}^{n}}\big(f(X_{t_{k-1}^{n}}^{\theta,\varepsilon},\theta) - f(X_{t}^{\theta,\varepsilon},\theta)  \big)dt\bm{1}_{J_{k,0}^{n}}\bigg \rvert^{p} \bigg]< \infty,\label{6.2.6} \\
&\,\sup_{n,\varepsilon,\lambda}\sup_{\theta}\mathbb{E}_{\theta_{0}}\bigg[\bigg\lvert \varepsilon^{-1}\sum_{k = 1}^{n}\int_{t_{k-1}^{n}}^{t_{k}^{n}}\big(\partial_{\theta}f(X_{t_{k-1}^{n}}^{\theta,\varepsilon},\theta) - \partial_{\theta}f(x_{t},\theta) \big)dt\bm{1}_{J_{k,0}^{n}}\bigg \rvert^{p}\bigg]<\infty.\label{6.2.7}
\end{align}
For \eqref{6.2.6}, 
from Taylor expansion, Lemma \ref{moment_INEQ_diffusion} (3) and Assumption $f \in C_{\uparrow}^{1,1}(\mathbb{R} \times \Theta)$, 
we have that 
\begin{align}
    &\,\sup_{\theta}E_{\theta_{0}}\bigg[\bigg\lvert \sum_{k = 1}^{n}\int_{t_{k-1}^{n}}^{t_{k}^{n}}\bigg(f(X_{t_{k-1}^{n}}^{\theta,\varepsilon},\theta) - f(X_{s}^{\theta,\varepsilon},\theta)\bigg)\bm{1}_{J_{k,0}^{n}}ds \bigg\rvert^{p}\bigg]\nonumber\\
    \lesssim\,&\,  (\varepsilon/\sqrt{n})^{p} n^{p} \frac{1}{n}\sum_{k = 1}^{n}\frac{1}{n^{p-1}}\int_{t_{k-1}^{n}}^{t_{k}^{n}}\mathbb{E}_{\theta_{0}}\bigg[\bigg((\varepsilon \frac{1}{n^{1/2}})^{-1}\sup_{t \in [t_{k-1}^{n},t_{k}^{n}]} \lvert X_{t}^{\theta,\varepsilon} - X_{t_{k-1}^{n}}^{\theta,\varepsilon} \rvert \bigg)^{2p}\bigg]^{1/2}dt \nonumber\\
    \lesssim\,&\,  (\varepsilon/\sqrt{n})^{p} n^{p} \frac{1}{n}\sum_{k = 1}^{n}\frac{1}{n^{p}} \mathbb{E}_{\theta_{0}}\bigg[\bigg((\varepsilon \frac{1}{n^{1/2}})^{-1}\sup_{t \in [t_{k-1}^{n},t_{k}^{n}]}\lvert X_{t}^{\theta,\varepsilon} - X_{t_{k-1}^{n}}^{\theta,\varepsilon} \rvert \bigg)^{2p}\bigg]^{1/2} \nonumber\\
    =\,&\,  O((\varepsilon/\sqrt{n})^{p}), \nonumber
\end{align}
which implies \eqref{6.2.6}. In the same way, we obtain \eqref{6.2.7}.

Second, let us show
\begin{align}
&\,\sup_{n,\varepsilon,\lambda}\mathbb{E}_{\theta_{0}}\bigg[\sup_{\theta} \bigg \lvert \varepsilon^{-1}\sum_{k = 1}^{n}\int_{t_{k-1}^{n}}^{t_{k}^{n}}\bigg(f(X_{t}^{\theta,\varepsilon},\theta) - f(x_{t},\theta)\bigg)ds\bm{1}_{J_{k,0}^{n}}\bigg \rvert^{p}\bigg] < \infty,\nonumber
\end{align}
which follows from that
\begin{align}
&\,\sup_{n,\varepsilon,\lambda}\sup_{\theta}\mathbb{E}_{\theta_{0}}\bigg[ \bigg \lvert 
\varepsilon^{-1}\sum_{k = 1}^{n}\int_{t_{k-1}^{n}}^{t_{k}^{n}}\bigg(f(X_{t}^{\theta,\varepsilon},\theta) 
- f(x_{t},\theta)\bigg)ds\bm{1}_{J_{k,0}^{n}}\bigg \rvert^{p}\bigg] < \infty,\label{6.2.9}\\    
&\sup_{n,\varepsilon,\lambda}\sup_{\theta}\mathbb{E}_{\theta_{0}}\bigg[ \bigg \lvert \varepsilon^{-1}\sum_{k = 1}^{n}\int_{t_{k-1}^{n}}^{t_{k}^{n}}\bigg(\partial_{x}f(X_{t}^{\theta,\varepsilon},\theta) - \partial_{x}f(x_{t},\theta)\bigg)ds\bm{1}_{J_{k,0}^{n}}\bigg \rvert^{p}\bigg] < \infty. \label{6.2.10}
\end{align}
From Taylor expansion, Lemma \ref{approx_for_conditional_e} (2) and Assumption $f \in C_{\uparrow}^{1,1}(\mathbb{R} \times \overline{\Theta};\mathbb{R})$ , we obtain that 
\begin{align}
    &\,\mathbb{E}_{\theta_{0}}\bigg[  \bigg \lvert \sum_{k = 1}^{n}\int_{t_{k-1}^{n}}^{t_{k}^{n}}\bigg(f(X_{t}^{\theta,\varepsilon},\theta) - f(x_{t},\theta)\bigg)ds\bm{1}_{J_{k,0}^{n}}\bigg \rvert^{p} \bigg] \nonumber\\
     \lesssim \,&\, \mathbb{E}_{\theta_{0}}\bigg[ \bigg \lvert \sum_{k = 1}^{n}\int_{t_{k-1}^{n}}^{t_{k}^{n}}\int_{\eta = 0}^{\eta = 1}\partial_{x}f\big(X_{s}^{\theta,\varepsilon} + \eta (X_{s}^{\theta,\varepsilon}- X_{t_{k-1}^{n}}^{\theta,\varepsilon}),\theta\big) (X_{s}^{\theta,\varepsilon} - X_{t_{k-1}^{n}}^{\theta,\varepsilon})\bm{1}_{J_{k,0}^{n}}d\eta ds \bigg \rvert\bigg]\nonumber\\
    \lesssim \,&\,  (\varepsilon \frac{1}{n^{1/2}})^{p}n^{p}\frac{1}{n}\sum_{k = 1}^{n}\frac{1}{n^{p}}\mathbb{E}_{\theta_{0}}\bigg[ \bigg( (\varepsilon \frac{1}{n^{1/2}})^{-1} \sup_{u \in [t_{k-1}^{n},t_{k}^{n}]}\big \lvert X_{u} - x_{u}\big \rvert\bigg)^{p} \bigg] \nonumber\\
     =\,&\, o_{p}((\varepsilon/\sqrt{n})^{p}). \nonumber
\end{align}
In the same way, we have that \eqref{6.2.9}. Finally, we have to show that for any $p \in \mathbb{N}$,
 \begin{align}
&\,\sup_{n,\varepsilon,\lambda}\mathbb{E}_{\theta_{0}}\bigg[\bigg(\varepsilon^{-1}\frac{1}{n} \sum_{k = 1}^{n} \bm{1}_{J_{k,0}^{c}}\bigg)^{p}\bigg] < \infty. \nonumber
\end{align}
By \Aref{a9}, our problem reduces to prove that for any integer $p$ with $p \geq 2$,
\begin{align}
    &\,\mathbb{E}_{\theta_{0}}\bigg[\bigg(\frac{1}{\lambda}\sum_{k = 1}^{n}\bm{1}_{J_{k,0}^{c}} \bigg)^{p} \bigg] < \infty.\label{j1_j2_moment}
\end{align}
We show \eqref{j1_j2_moment} applying Mathematical induction. For $p = 2$, we have
\begin{align}
    \mathbb{E}_{\theta_{0}}\bigg[\bigg(\frac{1}{\lambda}\sum_{k = 1}^{n}\bm{1}_{J_{k,0}^{c}} \bigg)^{2} \bigg] &\,\lesssim  \mathbb{E}_{\theta_{0}}\bigg[\sum_{k = 1}^{n}\bigg\{\frac{1}{\lambda}\bm{1}_{J_{k,0}^{c}} - \mathbb{P}(J_{k,0}^{c}\mid \mathcal{F}_{t_{k-1}^{n}}) \bigg\}^{2}\bigg] + O(1) < \infty. \nonumber
\end{align}
 For $p \geq 3$, suppose that
 \begin{align}
 &\,\sup_{n,\varepsilon,\lambda}\mathbb{E}_{\theta_{0}}\bigg[\bigg \lvert \frac{1}{\lambda}\sum_{k = 1}^{n} \bm{1}_{J_{k,0}^{n}}\bigg \rvert^{p-1} \bigg] < \infty. \nonumber 
 \end{align}
 By Burkholder and Davis inequality and since $p \geq 3$, we obtain that
 \begin{align}
&\sup_{n,\varepsilon,\lambda}\mathbb{E}_{\theta_{0}}\bigg[\bigg(\frac{1}{\lambda}\sum_{k = 1}^{n}\bm{1}_{J_{k,0}^{c}} \bigg)^{p} \bigg] \nonumber\\
    &\,\lesssim \sup_{n,\varepsilon,\lambda}\mathbb{E}_{\theta_{0}}\bigg[\bigg(\frac{1}{\lambda^{2}}\sum_{k = 1}^{n}\bm{1}_{J_{k,0}^{c}} \bigg)^{p/2} \bigg] + \sup_{n,\varepsilon,\lambda}\bigg( \frac{1}{\lambda}\sum_{k = 1}^{n}\mathbb{P}_{\theta_{0}}(J_{k,0}^{n} \mid \mathcal{F}_{t_{k-1}^{n}})\bigg)^{p} + \sup_{n,\varepsilon,\lambda}\bigg( \frac{1}{\lambda}\sum_{k = 1}^{n}(1 - \exp\{\lambda/n\})\bigg)^{p} \nonumber\\
    &\,\lesssim \sup_{n,\varepsilon,\lambda}\frac{1}{\lambda^{p}}\mathbb{E}_{\theta_{0}}\bigg[ \bigg(\sum_{k = 1}^{n}\bm{1}_{J_{k,0}^{c}}\bigg)^{p-1}\bigg] + O(1) + O(1) < \infty. \nonumber
 \end{align}
Thus, we obtain \eqref{j1_j2_moment}. 

We show Lemma \ref{moment_INEQ_diffusion} (3). As in Lemma \ref{approx_for_conditional_e} (1) and (2),
it suffices to show that for any $p \in \mathbb{N}$,
\begin{align}
&\,\sup_{(\mu,\sigma)}\sup_{n,\varepsilon,\lambda}\mathbb{E}_{\theta_{0}}\bigg[ \bigg\lvert \varepsilon^{-1}\sum_{k = 1}^{n}f(X_{t_{k-1}^{n}}^{\theta,\varepsilon},\theta)\tilde{\eta}_{k}^{n}(\mu_{0}) \bigg\rvert^{p} \bigg] <\infty,\label{6.2.13}\\
&\,\sup_{(\mu,\sigma)}\sup_{n,\varepsilon,\lambda}\mathbb{E}_{\theta_{0}}\bigg[ \bigg\lvert \varepsilon^{-1}\sum_{k = 1}^{n}\partial_{\theta}f(X_{t_{k-1}^{n}}^{\theta,\varepsilon},\theta)\tilde{\eta}_{k}^{n}(\mu_{0})\bigg \rvert^{p}\bigg] < \infty. \label{6.2.14}
\end{align}
We have that for any $p \in \mathbb{N}$,
\begin{align}
    &\mathbb{E}_{\theta_{0}}\bigg[\bigg 
    \lvert \Delta_{k}^{n}X^{\theta,\varepsilon}\bm{1}_{J_{k,0}^{n}} 
    - \mathbb{E}_{\theta_{0}}\bigg[ \Delta_{k}^{n}X^{\theta,\varepsilon}\bm{1}_{J_{k,0}^{n}}
     \mid \mathcal{F}_{t_{k-1}^{n}}\bigg] \bigg \rvert^{p} \bigg]\nonumber\\
     \lesssim&\,\frac{1}{n^{p-1}} \int_{t_{k-1}^{n}}^{t_{k}^{n}}\mathbb{E}_{\theta_{0}}\big[ \big\lvert a(X_{s}^{\theta,\varepsilon},\mu_{0}) - \mathbb{E}_{\theta_{0}}\big[a(X_{s}^{\theta,\varepsilon},\mu_{0}) 
    \mid \mathcal{F}_{t_{k-1}^{n}} \big \rvert^{p}\big] \big]ds \nonumber\\
    +&\,\frac{\varepsilon^{p}}{n^{p/2-1}}\int_{t_{k-1}^{n}}^{t_{k}^{n}}\mathbb{E}_{\theta_{0}}\bigg[ \lvert b(X_{s}^{\theta,\varepsilon},\sigma_{0})\rvert^{p} \bigg]ds \nonumber\\
    =&\,O(1/n^{p}) + O((\varepsilon/\sqrt{n})^{p}). \label{6.2.15}
\end{align}
Since 
\begin{align}
    \tilde{\eta}_{k}^{n}(\mu_{0}) =& \Delta_{k}^{n}X^{\theta,\varepsilon} \bm{1}_{J_{k,0}^{n}}
     - \mathbb{E}_{\theta_{0}}\bigg[\Delta_{k}^{n}X^{\theta,\varepsilon} \bm{1}_{J_{k,0}^{n}}\mid
      \mathcal{F}_{t_{k-1}^{n}}\bigg]  \nonumber\\
     +& \mathbb{E}_{\theta_{0}}\bigg[\Delta_{k}^{n}X^{\theta,\varepsilon} \bm{1}_{J_{k,0}^{n}}\mid \mathcal{F}_{t_{k-1}^{n}}\bigg] - \frac{1}{n}a(X_{t_{k-1}^{n}}^{\theta,\varepsilon},\mu_{0})\bm{1}_{J_{k,0}^{n}} \nonumber\\
    =:& N_{k}^{n} + R(X_{t_{k-1}^{n}}^{\theta,\varepsilon},\frac{1}{n^{2}}), \nonumber 
\end{align}
and from Lemma \ref{approx_for_conditional_e} (1) and \Aref{a9}, we find that
 for any $p \in \mathbb{N}$,
\begin{align}
&\,\sup_{\theta}\mathbb{E}_{\theta_{0}}\bigg[\bigg(\varepsilon^{-1}\bigg\lvert \sum_{k = 1}^{n}f(X_{t_{k-1}^{n}}^{\theta,\varepsilon},\theta)N_{k}^{n}
     \bigg\rvert\bigg)^{p} \bigg]\nonumber\\
     &\, \lesssim \varepsilon^{p}\sup_{\theta}\mathbb{E}_{\theta_{0}}\bigg[\bigg( \sum_{k = 1}^{n}\bigg\lvert f(X_{t_{k-1}^{n}}^{\theta,\varepsilon},\theta)N_{k}^{n} \bigg\rvert^{2} \bigg)^{p/2}\bigg] \nonumber\\
    &\,\lesssim \varepsilon^{-p} \sup_{\theta} n^{p/2}\frac{1}{n}
    \sum_{k = 1}^{n}\mathbb{E}_{\theta_{0}}
    \bigg[\bigg \lvert f(X_{t_{k-1}^{n}}^{\theta,\varepsilon},\theta) \bigg \rvert^{p} \bigg
     \lvert N_{k}^{n} \bigg \rvert^{p} \bigg] \nonumber\\
    &\,\lesssim \varepsilon^{-p} n^{p/2}\frac{1}{n}\sum_{k = 1}^{n}
    \sup_{\theta}\mathbb{E}_{\theta_{0}}\bigg[\bigg\lvert f(X_{t_{k-1}^{n}}^{\theta,\varepsilon},\theta) 
    \bigg\rvert^{2p}\bigg]^{1/2}
     \mathbb{E}_{\theta_{0}}\bigg[\bigg\lvert N_{k}^{n} \bigg\rvert^{2p} \bigg]^{1/2} \nonumber\\
    &\,= \varepsilon^{-p}n^{p/2}O(\varepsilon^{p}/n^{p/2}) = O(1). \nonumber
\end{align}
Similarly we obtain that
\begin{align}
&\mathbb{E}_{\theta_{0}}\bigg[\bigg(\varepsilon^{-1}\bigg\lvert \sum_{k = 1}^{n}\partial_{\theta}f(X_{t_{k-1}^{n}}^{\theta,\varepsilon},\theta)N_{k}^{n} \bigg\rvert \bigg)^{p}\bigg] = \varepsilon^{-p}n^{p/2}O(\varepsilon^{p}/{n}^{p/2}) = O(1) .\nonumber
\end{align}
On the otherhand, from Lemma \ref{approx_for_conditional_e} (1) and \Aref{a9}, we have that for any $p \in \mathbb{N}$,
\begin{align}  
    &\sup_{n,\varepsilon,\lambda}\mathbb{E}_{\theta_{0}}\bigg[\bigg(\varepsilon^{-1} \bigg \lvert \sum_{k = 1}^{n}f(X_{t_{k-1}^{n}}^{\theta,\varepsilon},\theta)N_{k}^{n} \bigg \rvert \bigg)^{p} \bigg] < \infty. \nonumber
\end{align}
and
\begin{align}
&\,\sup_{n,\varepsilon,\lambda}\mathbb{E}_{\theta_{0}}\bigg[\bigg(\sup_{\theta}\varepsilon^{-1}\bigg \lvert \sum_{k = 1}^{n}f(X_{t_{k-1}^{n}}^{\theta,\varepsilon},\theta)R(X_{t_{t_{k-1}^{n}}},\frac{1}{n^{2}}) \bigg \rvert \bigg)^{p}\bigg] < \infty ,\nonumber
\end{align}
which implies \eqref{6.2.13}. As in the proof of Lemma \eqref{6.2.13}, we can prove \eqref{6.2.14}.

Concerning Lemma \ref{moment_INEQ_diffusion}, we easily obtain that
\begin{align}
&\mathbb{E}_{\theta_{0}}\bigg[\bigg(\sup_{\theta}\varepsilon^{-2}\bigg \lvert \sum_{k = 1}^{n} 
 f(X_{t_{k-1}^{n}}^{\theta,\varepsilon},\theta)\tilde{\eta}_{k}^{n}(\mu) \bigg\rvert \bigg)^{p}\bigg] \nonumber\\
 \leq&  \mathbb{E}_{\theta_{0}}\bigg[\bigg(\sup_{\theta}\varepsilon^{-2}\bigg \lvert \sum_{k = 1}^{n} 
 f(X_{t_{k-1}^{n}}^{\theta,\varepsilon},\theta)\tilde{\eta}_{k}^{n}(\mu_{0}) \bigg\rvert \bigg)^{p}\bigg] \nonumber\\
 +&\mathbb{E}_{\theta_{0}}\bigg[\bigg(\sup_{\theta}\bigg \lvert \sum_{k = 1}^{n} 
 \frac{1}{n}f(X_{t_{k-1}^{n}}^{\theta,\varepsilon},\theta)b^{2}(X_{t_{k-1}^{n}}^{\theta,\varepsilon},\sigma_{0}) \bigg\rvert \bigg)^{p}\bigg] \nonumber\\
 +&\mathbb{E}_{\theta_{0}}\bigg[\bigg(\sup_{\theta}\bigg \lvert \sum_{k = 1}^{n}\frac{1}{n}\big(a(X_{t_{k-1}^{n}},\mu) - a(X_{t_{k-1}^{n}}^{\theta,\varepsilon},\mu_{0})\big)\bm{1}_{\tilde{J}_{k,0}^{n}}\rvert \bigg)^{p}\bigg] \nonumber
\end{align}
From Lemma \ref{approx_for_conditional_e} (1)-(3) and By \Aref{a9} and $f \in C_{\uparrow}^{1,1}(\mathbb{R} \times \overline{\Theta};\mathbb{R})$, we obtain Lemma \ref{moment_INEQ_diffusion} (4).

We show Lemma \ref{moment_INEQ_diffusion} (5). As in the proof of Lemma 3 in Kaino and Uchida \cite{4}, we have that
    \begin{align}
        &\tilde{\eta}_{k}^{n}(\mu_{0})^{2} - \frac{\varepsilon^{2}}{n}
        b^{2}(X_{t_{k-1}^{n}}^{\theta,\varepsilon},\sigma_{0})\bm{1}_{J_{k,0}^{n}} \nonumber\\
        =&\bigg( \Delta_{k}^{n}X^{\theta,\varepsilon} \bm{1}_{J_{k,0}^{n}} - \mathbb{E}_{\theta_{0}}\bigg[\Delta_{k}^{n}X^{\theta,\varepsilon}\bm{1}_{J_{k,0}^{n}} \mid \mathcal{F}_{t_{k-1}^{n}}\bigg]\bigg)^{2} - \mathrm{Var}\bigg(\Delta_{k}^{n}X^{\theta,\varepsilon}\bm{1}_{J_{k,0}^{n}} \mid \mathcal{F}_{t_{k-1}^{n}}\bigg)\nonumber\\
        +& \mathrm{Var}\bigg(\Delta_{k}^{n}X^{\theta,\varepsilon}\bm{1}_{J_{k,0}^{n}} \mid \mathcal{F}_{t_{k-1}^{n}}\bigg) - \frac{\varepsilon^{2}}{n}b^{2}(X_{t_{k-1}^{n}}^{\theta,\varepsilon},\sigma_{0})\bm{1}_{J_{k,0}^{n}} \nonumber\\
        +& 2 R(X_{t_{k-1}^{n}}^{\theta,\varepsilon},\frac{1}{n^{2}})\bigg(\Delta_{k}^{n}X^{\theta,\varepsilon}\bm{1}_{J_{k,0}^{n}} - \mathbb{E}_{\theta_{0}}\bigg[\Delta_{k}^{n}X^{\theta,\varepsilon}\bm{1}_{J_{k,0}^{n}} \mid \mathcal{F}_{t_{k-1}^{n}}\bigg] \bigg) \nonumber\\
        +& R(X_{t_{k-1}^{n}}^{\theta,\varepsilon},\frac{1}{n^{4}}), \nonumber
    \end{align}
    and we find that
    \begin{align}
    &\,\mathrm{Var}(\Delta_{k}^{n}X^{\theta,\varepsilon} \bm{1}_{J_{k,0}^{n}} \mid \mathcal{F}_{t_{k-1}^{n}})
    - \frac{\varepsilon^{2}}{n}b^{2}(X_{t_{k-1}^{n}}^{\theta,\varepsilon},\sigma_{0})\bm{1}_{J_{k,0}^{n}}\nonumber\\
    =&\,\mathbb{E}_{\theta_{0}}\big[\tilde{\eta}_{k}^{n}(\mu_{0})^{2} \mid \mathcal{F}_{t_{k-1}^{n}}\big] -\frac{\varepsilon^{2}}{n}b^{2}(X_{t_{k-1}^{n}}^{\theta,\varepsilon},\sigma_{0})\bm{1}_{J_{k,0}^{n}} -\mathbb{E}_{\theta_{0}}\big[\tilde{\eta}_{k}^{n}(\mu_{0})\mid \mathcal{F}_{t_{k-1}^{n}}\big]^2 \nonumber\\
        -&\, 2 \frac{1}{n}a(X_{t_{k-1}^{n}}^{\theta,\varepsilon},\mu_{0})\times \mathbb{E}_{\theta_{0}}
        \big[\big(\tilde{\eta}_{k}^{n}(\mu_{0}) - 
        \mathbb{E}_{\theta_{0}}\big[\tilde{\eta}_{k}^{n}(\mu_{0})
         \mid \mathcal{F}_{t_{k-1}^{n}}\big]\big)
         \big(\bm{1}_{J_{k,0}^{n}} - 
         \mathbb{P}(J_{k,0}^{n} \mid \mathcal{F}_{t_{k-1}^{n}})\big) 
         \mid \mathcal{F}_{t_{k-1}^{n}} \big] \nonumber\\
         +&\,\frac{1}{n^{2}}a^{2}(X_{t_{k-1}^{n}}^{\theta,\varepsilon},\mu_{0})\mathrm{Var}
         \big(\bm{1}_{J_{k,0}^{n}} \mid \mathcal{F}_{t_{k-1}^{n}}\big), \nonumber\\
         =&\, \mathbb{E}_{\theta_{0}}\bigg[\tilde{\eta}_{k}^{n}(\mu_{0})^{2} - 
         \frac{\varepsilon^{2}}{n}b^{2}(X_{t_{k-1}^{n}},\sigma_{0})\bm{1}_{J_{k,0}^{n}} \mid \mathcal{F}_{t_{k-1}^{n}}\bigg] 
          \nonumber\\
         +& \frac{\varepsilon^{2}}{n}b^{2}(X_{t_{k-1}^{n}}^{\theta,\varepsilon},\sigma_{0}) \bigg(\bm{1}_{J_{k,0}^{n}} - \mathbb{P}(J_{k,0}^{n} \mid \mathcal{F}_{t_{k-1}^{n}})\bigg) + R(X_{t_{k-1}^{n}}^{\theta,\varepsilon},\frac{\varepsilon^{2}}{n^{3/2}} + \frac{1}{n^{5/2}}). \nonumber
\end{align}
Therefore, we obtain 
\begin{align}
    &\,\tilde{\eta}_{k}^{n}(\mu_{0})^{2} - \varepsilon^{2}\frac{1}{n}b^{2}(X_{t_{k-1}^{n}}^{\theta,\varepsilon},\sigma_{0}) \bm{1}_{J_{k,0}^{n}} 
    = M_{k}^{(1)} + M_{k}^{(2)} + R(X_{t_{k-1}^{n}}^{\theta,\varepsilon},\frac{\varepsilon^{2}}{n^{3/2}} \lor \frac{1}{n^{5/2}} ),\nonumber 
  \end{align}
  where 
  \begin{align}
    M_{k}^{(1)}&\, = \bigg(\Delta_{k}^{n}X^{\theta,\varepsilon}\bm{1}_{J_{k,0}^{n}} - \mathbb{E}_{\theta_{0}}\bigg[\Delta_{k}^{n}X^{\theta,\varepsilon}\bm{1}_{J_{k,0}^{n}} \mid \mathcal{F}_{t_{k-1}^{n}}\bigg]\bigg)^{2} - \mathrm{Var}\bigg(\Delta_{k}^{n}X^{\theta,\varepsilon} \bm{1}_{J_{k,0}^{n}} \mid \mathcal{F}_{t_{k-1}^{n}}\bigg),\nonumber\\
    M_{k}^{(2)}&\, = 2R(X_{t_{k-1}^{n}}^{\theta,\varepsilon},\frac{1}{n^{2}})\bigg\{ \Delta_{k}^{n}X^{\theta,\varepsilon} \bm{1}_{J_{k,0}^{n}} -  \mathbb{E}_{\theta_{0}}\bigg[\Delta_{k}^{n}X^{\theta,\varepsilon} \bm{1}_{J_{k,0}^{n}} \mid \mathcal{F}_{t_{k-1}^{n}}\bigg] \bigg\}. \nonumber
  \end{align}
Imitating the proof of Lemma 3 (1) in Kaino and Uchida \cite{4}, we obtain for any $p \in \mathbb{N}$
\begin{align}
&\mathbb{E}_{\theta_{0}}\bigg[\bigg(\sup_{\theta}\sqrt{n}\bigg \lvert \varepsilon^{-2}\sum_{k = 1}^{n}f(X_{t_{k-1}^{n}}^{\theta,\varepsilon},\theta)M_{1,k}^{n}(\theta) \bigg\rvert\bigg)^{p} \bigg] < \infty, \nonumber\\
    &\mathbb{E}_{\theta_{0}}\bigg[\bigg(\sup_{\theta}\sqrt{n}\bigg \lvert \varepsilon^{-2}\sum_{k = 1}^{n}f(X_{t_{k-1}^{n}}^{\theta,\varepsilon},\theta)M_{2,k}^{n}(\theta) \bigg\rvert\bigg)^{p} \bigg] < \infty, \nonumber\\
    &\mathbb{E}_{\theta_{0}}\bigg[\bigg(\sup_{\theta}\sqrt{n}\bigg \lvert \varepsilon^{-2}\sum_{k = 1}^{n}f(X_{t_{k-1}^{n}}^{\theta,\varepsilon},\theta)R(X_{t_{k-1}^{n}}^{\theta,\varepsilon},\frac{\varepsilon^{2}}{n^{3/2}} \lor \frac{1}{n^{5/2}} ) \bigg\rvert\bigg)^{p} \bigg] < \infty, \nonumber
\end{align} 
which implies Lemma \ref{moment_INEQ_diffusion} (5).

Concerning Lemma \ref{moment_INEQ_diffusion} (6), we obtain it as in the proof of Lemma \ref{moment_INEQ_diffusion} (4).
\end{proof}
\begin{proof}[Proof of Lemma \ref{moment_ineq_jump}]
    Let $S =\{S_{k}\}_{1 \leq k \leq n}$ be a 
    Martingale and $S_{k}$ be $\mathcal{F}_{t_{k-1}^{n}}$ measurable for any $n \in \mathbb{N}$ and $k = 1,2,\dots,n$.
     Define the martingale \,$\{\psi_{k}^{1}(S)\}_{1 \leq k \leq n}$ as follows:
     \begin{align}
        \psi_{k}^{1}(S)&:= S_{k}. \nonumber
     \end{align}
    For $l \in \mathbb{N}$ with $l \geq 2$, suppose that $\{\psi_{k}^{l-1}(S)\}_{k = 1}^{n}$ is defined and set
    $\{\psi_{k}^{l}(S)\}_{k = 1}^{n}$ by
        \begin{align}
        \psi_{k}^{l}(S) &:= \bigg(\psi_{k}^{l}(S) - \mathbb{E}_{\theta_{0}}[\psi_{k}^{l}(S)\mid \mathcal{F}_{t_{k-1}^{n}}] \bigg)^{2}. \nonumber
    \end{align}
    To show Lemma \ref{moment_ineq_jump}, it suffices to show that for any integer $l$ with $l \geq 0$,
    \begin{align}
&\sup_{n,\varepsilon,\lambda}\mathbb{E}_{\theta_{0}}\bigg[\bigg\lvert \frac{1}{\lambda}\sum_{k = 1}^{n}g_{k-1}\bigg( c(X_{t_{k-1}^{n}}^{\theta,\varepsilon},\alpha_{0})V_{\tau_{k}},\alpha\bigg)\bm{1}_{J_{k,1}^{n}}\bigg \rvert^{2^{l}}\bigg] < \infty . \label{jump_moment_g}
    \end{align}
    \normalsize
     As in the proof of proposition 3 in Ogihara and Yoshida 
\cite{11}, we obtain for any $l \in \mathbb{N}$
    \begin{align}
     &\,\bigg\lvert \mathbb{E}_{\theta_{0}}\bigg[\bigg(\frac{1}{\lambda}\sum_{k = 1}^{n}g_{k-1}\bigg(c(X_{t_{k-1}^{n}}^{\theta,\varepsilon},\alpha_{0})
     V_{\tau_{k}},\alpha\bigg)\bm{1}_{J_{k,1}^{n}} \bigg)^{2^{l}}\bigg] \bigg \rvert
    \nonumber \\
    \lesssim &\,
    \mathbb{E}_{\theta_{0}}\bigg[\bigg \lvert 
    \sum_{k = 1}^{n} \psi_{k}^{l + 1}\bigg(\frac{1}{\lambda}g_{k-1}\big(c(X_{t_{k-1}^{n}}^{\theta,\varepsilon},\alpha_{0})V_{\tau_{k}},\alpha\big)\bigg)
    \bigg \rvert \bigg] + \sum_{i = 1}^{l}\mathbb{E}_{\theta_{0}}
    \bigg[ \bigg \lvert 
    \sum_{k = 1}^{n} \psi_{k}^{i}\bigg(\frac{1}{\lambda}g_{k-1}\big(c(X_{t_{k-1}^{n}}^{\theta,\varepsilon},\alpha_{0})V_{\tau_{k}},\alpha\big)\bigg)
    \bigg \rvert\bigg] \nonumber
    \end{align}
and as in the proof of Lemma 6 and 7 in Ogihara and Yoshida 
\cite{11}, we have \eqref{jump_moment_g}, which immediately implies Lemma \ref{moment_ineq_jump}.
\end{proof}

\end{document}